\colorlet{deJP}{blue!40!red}
\numberwithin{equation}{section}
\theoremstyle{definition}
\newtheorem{theorem}[equation]{Theorem}
\newtheorem{definition}[equation]{Definition}
\newtheorem{remark}[equation]{Remark}
\theoremstyle{definition}
\newtheorem{lem}[equation]{Lemma}
\newcommand{\Z}{\mathbb{Z}}
\newcommand{\R}{\mathbb{R}}
\newcommand{\Sp}{\mathbb{S}}
\newcommand{\T}{\mathbb{T}}
\newcommand{\PP}{\mathbb{P}}
\renewcommand{\baselinestretch}{1.2}
\definecolor{lightgray}{gray}{0.65}
\begin{document}

\title[An elementary proof of Euler's formula using Cauchy's Method]{An elementary 
proof of Euler's formula \\ using Cauchy's Method}

\author{Jean-Paul \textsc{Brasselet} and Nguy\~\ecircumflex n Th\d{i} B\'ich Th\h{u}y
}
\address{CNRS I2M and Aix-Marseille Universit\'e, Marseille, France.}
\email{jean-paul.brasselet@univ-amu.fr}
\address{UNESP, Universidade Estadual Paulista, ``J\'ulio de Mesquita Filho'', S\~ao Jos\'e do Rio Preto, Brasil}
\email{bich.thuy@unesp.br}

\newenvironment{preuve}{{\noindent \sc {\bf Proof.}} } {\mbox{ }\hfill $\Box$}
\renewcommand{\abstractname}{Abstract}
\renewcommand{\figurename}{Figure}

\begin{abstract}

The use of Cauchy's method to prove Euler's well-known formula is an object of many controversies. 
The purpose of this  paper is to prove that Cauchy's method applies for convex polyhedra 
and not only for them,  but also for 
surfaces such as the torus, the projective plane, the Klein bottle and the pinched torus.
\end{abstract}

\maketitle
 
\section*{Introduction}

 Euler's formula says that for any convex polyhedron the alternating sum 
\begin{equation}\label{carEuler}
n_0 - n_1 + n_2, 
\tag{1}
\end{equation}
is equal to 2, where the numbers $ n_i $ are respectively the number of  vertices $ n_0 $,
the number of edges $n_1 $ and the number of triangles  $n_2 $ of the polyhedron.
There are many controversies about the paternity of the formula, 
also about who gave the first correct proof.

In section \ref{histoire}, we provide information about the history of the formula as well as 
about the first topological proof given by Cauchy. Some authors criticize Cauchy's proof,
 saying that the proof needs deep topological results that were proved after Cauchy's time: 
 ``{\it N\~ao se pode, portanto, esperar obter uma demonstra\c c\~ao elementar do Teorema 
 de Euler, com a hip\'otese de que o poliedro \'e homeomorfo a uma esfera,
  como fazem Hilbert-Cohn Vossen e Courant-Robbins}'' (Lima, \cite{Li2}) 
  \footnote{``Therefore, we cannot expect to obtain an elementary proof of Euler's theorem, 
  with the hypothesis that the polyhedron is homeomorphic to a sphere, as Hilbert-Cohn Vossen 
  and Courant-Robbins did.''}. 
 Notice that the proof provided by Hilbert-Cohn Vossen \cite{HC} 
 and  Courant-Robbins \cite{CR}  is the one of Cauchy. 
 
In section \ref{OTeo}, we 
provide an elementary proof which shows that only with a lifting technique and the use of 
sub-triangulations, Cauchy's proof works without using any other result. More precisely, 
considering a triangulated polygon in the plane, with possible identifications of the simplices on its boundary, 
 we prove that  the alternating sum (\ref{carEuler})  
of the polygon is equal to the one of its boundary plus $1$ (Theorem \ref{teo1}). The idea 
of our  proof is, starting from the hole formed by the removal of a simplex, to extend the hole 
by successive puddles.
The process is illutrated by the construction of a suitable pyramid.
A direct consequence of the theorem is  an elementary proof of Euler's formula 
using only Cauchy's method.

 As applications of our theorem \ref{teo1}, in section  \ref{applications}, 
we also use these tools to prove that for a triangulable surface ${\mathcal S}$ 
like the torus, the projective plane, the Klein bottle and even for the pinched torus, the alternating sum
(\ref{carEuler}) does not depend on the triangulation of the surface. 
 To be completely honest, for applications (other than the sphere) in section \ref{applications},  
we also use the idea of
``cutting'' surfaces that, in general, was introduced by Alexander Veblen in a seminar in 1915 
(see \cite{Bra}). 
Of course, one can ask why we do not apply theorem \ref{teo1} and the 
same reasoning to all (smooth) orientable and non-orientable surfaces. The reason is 
very simple: we want to provide proofs that it was possible to do at the time of Cauchy. 
It is only in 1925 that T. Rad\'o \cite{Rad} proved the triangulation theorem 
for surfaces, that was more or less assumed in Cauchy's time. The classification theorem
for compact surfaces and the representation by the ``normal'' form was proved 
for the first time in a rigorous way by H.R. Brahana \cite{Bra} (1921). 
It is clear that using our theorem \ref{teo1}  and the representation of surfaces under the normal form, 
we immediately obtain the Euler-Poincar\'e characteristic of any compact surface. 
However, that is like a dog biting his own tail. That it is the reason we do not 
present the result for surfaces in general but only what is possible to do with Cauchy's method
 for some elementary surfaces.

The first author had financial support of FAPESP 
(process UNESP-FAPESP  number 2015/06697-9).

%\vfill\break
\bigskip

\section{History} \label{histoire}

\subsection{Before Cauchy} \label{before}

The name ``Euler's formula'' comes from an announcement of Leonhard Euler in    November 14th of 1750
in a letter to his friend Goldbach of the following result:

\begin{theorem} \label{Euler's Theorem}
Let $K$ be a convex polyhedron, with $n_0$ vertices, $n_1$  edges  and $n_2$  two-dimensional polygons, then 
\begin{equation} \label{Euler}
n_0 - n_1 + n_2 = 2.
\tag{2}
\end{equation}
\end{theorem}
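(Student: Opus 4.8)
\medskip
\noindent\textbf{Proof strategy.} The plan is to deduce the formula (\ref{Euler}) from Theorem~\ref{teo1} by performing the three classical moves of Cauchy's method --- triangulate, remove one face, flatten the rest --- and then reading off the alternating sum of the resulting planar polygon. To begin with I would reduce to a triangulated polyhedron: each two-dimensional face of $K$ is a convex polygon, and cutting a $k$-gon into $k-2$ triangles by $k-3$ diagonals adds $k-3$ edges and $k-3$ two-simplices and no new vertices, so it leaves the alternating sum (\ref{carEuler}) unchanged. Doing this on every face produces a triangulated polyhedron $S$, still homeomorphic to the sphere, with the same value of $n_0-n_1+n_2$; it therefore suffices to prove that this value equals $2$.

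Next, fix one triangular face $F$ of $S$ and delete its open two-simplex. This removes exactly one triangle and nothing else, so the alternating sum of $S\setminus F$ is that of $S$ diminished by $1$. Now comes the one step that genuinely uses convexity: radial projection of $S\setminus F$ from a suitably chosen point placed just outside $K$ and close to $F$ --- Cauchy's image of ``looking at the polyhedron through the hole'' --- carries $S\setminus F$ homeomorphically onto a triangulated polygon $P$ lying in the plane of $F$, whose outer boundary $\partial P$ is exactly the triangle $\partial F$. Being a homeomorphism that sends simplices to simplices, this projection preserves $n_0$, $n_1$ and $n_2$, so the alternating sum of $P$ equals that of $S\setminus F$.

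Finally I would apply Theorem~\ref{teo1} to the planar triangulated polygon $P$: its alternating sum equals that of its boundary plus $1$. But $\partial P=\partial F$ consists of $3$ vertices, $3$ edges and no two-simplex, so its alternating sum is $3-3=0$. Hence the alternating sum of $P$ equals $1$, that of $S\setminus F$ equals $1$, and that of $S$ --- and therefore of the original polyhedron $K$ --- equals $1+1=2$, which is (\ref{Euler}).

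The substantive content is already isolated in Theorem~\ref{teo1}, so within the present argument the only delicate point is the flattening step: one must check that for a \emph{convex} polyhedron the radial projection from a point just beyond $F$ really is a homeomorphism onto a planar polygon, that it restricts to the identity on $\partial F$, and that it carries the triangulation of $S\setminus F$ to a genuine triangulation of $P$ to which Theorem~\ref{teo1} applies. Everything else is the elementary bookkeeping carried out above.
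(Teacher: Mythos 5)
Your proposal is correct and follows essentially the same route as the paper: both reduce to Cauchy's planar representation obtained by removing one face of the convex polyhedron and projecting the rest onto a plane, then apply Theorem \ref{teo1} together with the observation that the boundary circuit satisfies $n_0^{K_0}-n_1^{K_0}=0$, and finally add back the removed face to get $+2$. The only cosmetic difference is that you triangulate the faces before projecting while the paper triangulates the planar image afterwards; the bookkeeping is identical.
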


There are many different possible definitions of polyhedra.  
The discussion concerns the dimension of a polyhedron: Is a polyhedron a solid object 
of dimension three  or only its surface? 
In this paper, we call ``polyhedron'' the three dimensional solid figure.
A polyhedron is a figure constructed by polygons 
in such a way that each segment is the common face of exactly two 2-dimensional polygons 
and each vertex is the common face of at least three segments  (see \cite{Ri}, Chapter 2 for discussion).

\smallskip

 There are many questions and controversies about Euler's formula.
Here, let us discuss the two following questions: 
Was Euler the first mathematician stating the formula? 
Who provided the first proof of the formula?

\medskip 

Let us discuss the first question: {\it Who was the first to state the formula?}

\medskip 

Some authors 
(see \cite{Eve},  \S 3.12; \cite{Lie}, p. 90) write that it is possible that  
Archimedes ($\sim$ 287 AC, $\sim$ 212 AC)  already knew the formula. 
Some authors say that the formula was 
known to Descartes (1596 -- 1650). 
In fact, Descartes, in a manuscript \cite{De}, {\it ``De solidorum elementis''}, 
proved the following result:

\begin{theorem}\label{Descartes}
The sum of the angles of all the 2-dimensional polygons 
of a convex polyhedron is equal to $2(n_0 -2) \pi$.
\end{theorem}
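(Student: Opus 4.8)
The plan is to reduce Theorem \ref{Descartes} to two elementary counting facts and then to invoke Euler's formula (\ref{Euler}). Let $K$ have $n_2$ faces, and suppose the $i$-th face is a polygon with $k_i$ edges, $i = 1, \dots, n_2$; since $K$ is convex, every face is a convex polygon. Writing $\sigma_i$ for the sum of the interior angles of the $i$-th face, the quantity to be computed is $\Sigma = \sum_{i=1}^{n_2} \sigma_i$.

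First I would record the classical planar fact that a polygon with $k$ edges has interior angle sum $(k-2)\pi$, proved by a fan triangulation from one of its vertices into $k-2$ triangles together with the angle sum $\pi$ of a triangle (or by an easy induction on $k$, cutting off one triangle at a time). Applying this to each face of $K$ gives
\[
\Sigma \;=\; \sum_{i=1}^{n_2} (k_i - 2)\pi \;=\; \pi \sum_{i=1}^{n_2} k_i \;-\; 2\pi\, n_2 .
\]

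Next I would double count edge--face incidences. By the very definition of a polyhedron used in this paper, each segment is the common face of exactly two $2$-dimensional polygons, so $\sum_{i=1}^{n_2} k_i$ counts each of the $n_1$ edges of $K$ exactly twice, i.e.\ $\sum_{i=1}^{n_2} k_i = 2 n_1$. Hence $\Sigma = 2\pi(n_1 - n_2)$, and Euler's formula $n_0 - n_1 + n_2 = 2$ (Theorem \ref{Euler's Theorem}) gives $n_1 - n_2 = n_0 - 2$, so that $\Sigma = 2(n_0 - 2)\pi$, which is the assertion of Theorem \ref{Descartes}.

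In this route the only substantial ingredient is Euler's formula, so there is no real obstacle beyond bookkeeping --- but the argument is then not \emph{logically independent} of (\ref{Euler}). A proof independent of it, closer to what was available to Descartes, would instead work with the angular defects $\delta_v = 2\pi - \theta_v$ at the vertices, where $\theta_v$ is the sum of the face angles meeting at $v$; since $\sum_v \theta_v = \Sigma$, the claim $\Sigma = 2(n_0 - 2)\pi$ is equivalent to $\sum_v \delta_v = 4\pi$. One would then identify $\delta_v$ with the area of the spherical polygon described on the unit sphere by the outward normals of the supporting planes of $K$ at $v$, and observe that these spherical polygons tile the sphere, so their areas sum to $4\pi$. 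Making that identification rigorous --- essentially a discrete Gauss--Bonnet computation --- is the step I would expect to require the most care, which is why for the narrative of this paper I would present the short argument above and simply flag its reliance on Euler's formula.
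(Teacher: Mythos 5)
Your first argument is exactly the paper's proof: the per-face angle sum $(k_i-2)\pi$, the double count $\sum_i k_i = 2n_1$ giving a total of $2\pi(n_1-n_2)$, and the reduction to Euler's formula --- the paper merely phrases this as an \emph{equivalence} between Theorem \ref{Descartes} and formula (\ref{Euler}) rather than a one-way deduction, which is the same observation you make when you flag the reliance on (\ref{Euler}). Your closing sketch of an independent angular-defect/spherical-image proof is a reasonable aside but is not part of the paper's argument.
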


\begin{proof} [{\bf Proof that ``formula (\ref{Euler}) is equivalent to theorem \ref{Descartes}''}] 
Let us denote by $i=1, \ldots, n_2$ the 2-dimen\-sional faces of a convex polyhedron. 
For each face $i$, let us denote by $k_i$ the number of vertices, which is also the number 
of edges of the face. 
We use, for each face, the following property: In a convex polygon with $k_i$ edges,  
the sum of all the angles equals $(k_i -2)\pi$. 
Since each edge of the polyhedron appears in two faces of the polyhedron, 
then $\sum_{i=1}^{n_2} k_i = 2n_1$. 
Hence the sum of the angles of all the faces of the polyhedron equals 
$\sum_{i=1}^{n_2} (k_i -2)\pi$, that is   $(2n_1 - 2n_2) \pi$. 
We obtain equivalence between theorem \ref{Descartes} and formula (\ref{Euler}). 
\end{proof}

Descartes did not publish his manuscript. 
The original version of the manuscript disappeared, but a copy was rediscovered in 1860 
among papers left by Leibnitz  (1646-1716). 
This copy suffered some accidents, 
in particular an immersion in the Seine river in Paris 
(see  \cite{Fo}, \cite{dJ1}).  

Some authors say that Descartes discovered the formula  (\ref{Euler})  as an application 
of his theorem \ref{Descartes}. 
This is a reason why sometimes the formula   (\ref{Euler}) is called the  
``{\it Descartes-Euler formula}''. 
Other authors, for example, Malkevich \cite{Mal1}, 
affirmed that  ``{\it 
Though Descartes did discover facts about 3-dimensional polyhedra that would have enabled him to deduce Euler's formula, he did not take this extra step. With hindsight it is often difficult to see how a talented mathematician of an earlier era did not make a step forward that with today's insights seems natural, however, it often happens.}'' 
 However, we know that some of Descartes' papers disappeared, 
 so nobody can decide if Descartes knew the formula or not and  the response 
 to the first question is not known.

\medskip 

Let us now discuss the second question: 
 {\it Who was the first to provide a correct proof of formula  (\ref{Euler})?}

\medskip 

Two years after writing the formula, 
Euler provided a proof  \cite{Eu2,Eu3} 
which consisted of removing step by step each vertex of the polyhedron together with the pyramid of which it is the vertex. 

Here we provide the example  of the cube, copied 
from \cite{Ri} : In figure \ref{dEuler} (b), 
one eliminates the vertex $A$ as well as the (white) pyramid of which $A$ is a vertex. 
This operation does not change the alternating sum $n_0-n_1+n_2$. 
In figure \ref{dEuler} (c) 
we perform the same process in order to eliminate the vertex $B$, and so on, till
we obtain a tetrahedron. For the tetrahedron, one has $n_0-n_1+n_2 = 2$, so we obtain the formula. 

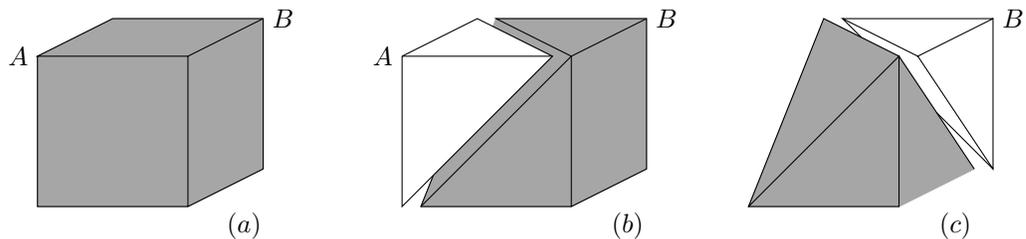
\begin{figure} [H] % Preuve d'Euler
\begin{tikzpicture}  [scale= 0.5] % premier cube

\coordinate (A) at (0,0);
\coordinate (B) at (4,0);
\coordinate  (C) at (6,1);
\coordinate  (D) at (0,4);
\coordinate (E) at (4,4);
\coordinate (F) at (2,5);
\coordinate (G) at (6,5);

\filldraw [lightgray] (A) -- (D) -- (F) -- (G) -- (C) -- (B) -- (A);
\draw (D)--(A) -- (B) -- (C)--(G)--(F) -- (D) -- (E) --(B);
\draw (E) -- (G) ;
\node at (0,4)[left]{$A$};
\node at (6,5)[right]{$B$};
\node at (5.5,-0.5){$(a)$};
\end{tikzpicture}  
\qquad 
 \begin{tikzpicture}  [scale= 0.5] %deuxi�me cube

\coordinate (A) at (0,0);
\coordinate (B) at (4,0);
\coordinate  (C) at (6,1);
\coordinate  (D) at (0,4);
\coordinate (E) at (4,4);
\coordinate (F) at (2,5);
\coordinate (G) at (6,5);
\coordinate  (H) at (-0.5,0);
\coordinate (J) at (-0.5,4);
\coordinate (K) at (1.5,5);
\coordinate (L) at (3.5,4);

\coordinate (M) at (intersection of A--F and H--L);
\coordinate (N) at (intersection of K--L and A--F);

\filldraw [lightgray] (A) -- (M) -- (L) -- (N) -- (F) -- (G) -- (C) -- (B) -- (A);
\draw (M)--(A) -- (B) -- (C)--(G)--(F) --  (E) --(B);
\draw (G) -- (E) -- (A) ;
\draw (J) -- (H) -- (L) -- (K) -- (J) ;
\draw (J) --   (L)  ;
\node at (-0.5,4)[left]{$A$};
\node at (6,5)[right]{$B$};
\node at (5.5,-0.5){$(b)$};
\end{tikzpicture}  
\qquad
\begin{tikzpicture}  [scale= 0.5] % troisi�me cube

\coordinate (A) at (0,0);
\coordinate (B) at (4,0);
\coordinate  (C) at (6,1);
\coordinate  (D) at (0,4);
\coordinate (E) at (4,4);
\coordinate (F) at (2,5);
\coordinate (G) at (6,5);
\coordinate (P) at (2.5,5);
\coordinate (Q) at (4.5,4);
\coordinate  (R) at (6.5,5);
\coordinate  (S) at (6.5,1);
\coordinate (T) at (intersection of F--E and P--S);
\coordinate (U) at (intersection of C--E and P--S);

\filldraw [lightgray] (A) -- (F) -- (E) -- (C) -- (B) -- (A);

\draw (A)--(F) --  (E) --(A) -- (B)-- (E)-- (C) ;
\draw (R)--(P) -- (Q) -- (R) -- (S)--(Q);
\draw(P)--(T);
\draw (U)--(S);
\node at (6.5,5)[right]{$B$};
\node at (5.5,-0.5){$(c)$};
\end{tikzpicture}  

\caption{Euler's proof: Successive elimination of a vertex as well as the pyramid 
 of  which  it is the vertex.} \label{dEuler}
\end{figure}

But this proof is not correct. 
In the book \cite{Ri}, Richeson provides  
 a clear description of Euler's proof, 
as well as the problems with the proof. According to Richeson, 
 these problems were solved by  
 Samelson  and by Francese and Richardson (\cite{Sam, FR}).  

The first correct proof was provided by Legendre \cite{Leg}  
in the first edition of his book 
{\it \'El\'ements de G\'eom\'etrie} (1794)  
(see  \cite{Ri}, Chapter 10 for a presentation of Legendre's proof). 
Legendre's argument was geometric in the same way as the 
proof of  Descartes for theorem \ref{Descartes}. 
The only difference between Descartes' argument and Legendre's argument
is that Descartes used the sphere presentation of the polyhedron (polar polyhedron), 
while Legendre used the polyhedron itself. 
The passage of the polyhedron  $K$ (Legendre) to the polar polyhedron of  $K$ (Descartes) 
makes a permutation of $n_0$ and $n_2$, without modifying $n_1$. 
 This is the reason why some authors say that the proof of 
 %the 
 Euler's formula (\ref{Euler}) 
should be called ``Descartes-Legendre's proof''.

\subsection{Cauchy's time: Cauchy's method - The first combinatorial and topological proof} \label{methode_Cauchy}

In February of 1811, then 22 years old, Cauchy, who was already an engineer
of  {\it Ponts et Chauss\'ees}, gave a talk entitled  {\it Recherches sur les poly\`edres}  
at the {\it \'Ecole Polytechnique}, in Paris. 
This talk was published in 1813 in the {\it Journal de l'\'Ecole Polytechnique} \cite{Ca1}, as the first  combinatorial and topological proof of Euler's formula  (\ref{Euler}). 
This nice proof of Cauchy is included in many books (see, in particular, \cite{Ri}, Chapter 12
for a presentation with comments). 

The first step of Cauchy's proof is to construct a planar representation. 

\begin{definition} \label{repplanar}
 A planar  representation of a compact and without boundary surface  
$\mathcal{S}$ is a triple $(K, K_0, \varphi)$ where:
\begin{enumerate}
\item  $K$ is a 2-dimensional polygon in $\R^2$, 
\item the segments and vertices of the boundary of $K$ are named and oriented with possible identifications. 
We denote by $K_0$ the boundary of  $K$ with the given identifications,
\item $\varphi: \vert K \vert \to \mathcal{S}$ is a homeomorphism of the geometrical realisation of   $K$ (taking care of identifications of the segments and vertices on the boundary $K_0$) onto $\mathcal{S}$. 
\end{enumerate}
\end{definition}

It seems that  Cauchy was the first person to use the idea of planar representation. 
We now present Cauchy's proof. 

\medskip 

\begin{preuve}  \label{proof_Cauchy} 
Given a convex polyhedron $\widehat K$, 
we choose  a  2-dimensional face 
${\mathcal P}$ of the polyhedron.  We remove this face. 
The first idea of Cauchy is to construct the associated planar representation $K$ 
 of the polyhedron 
with respect to the choice of the removed face.
 Lakatos \cite{Lak} explains Cauchy's construction as the following way: 
 Put a camera above the removed face of the polyhedron, 
the planar representation will appear as the photograph. 
Notice that this idea of planar representation is similar to stereographic projection.

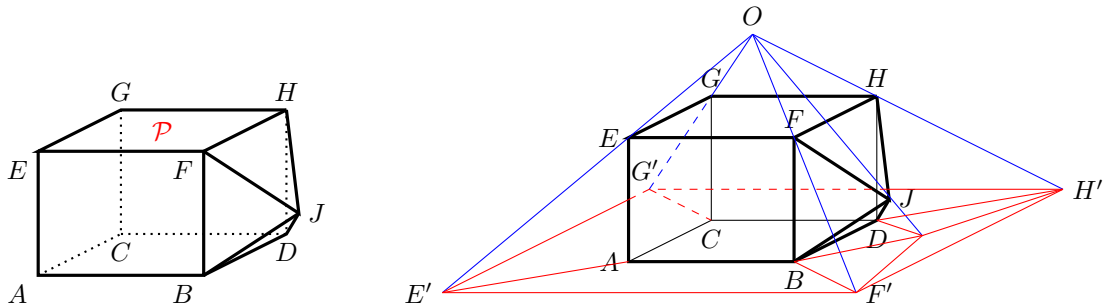
\begin{figure} [H]
\centering
\begin{tikzpicture}  [scale= 0.55] % le gros cube

\coordinate (A) at (0,0);
\coordinate (B) at (4,0);
\coordinate  (C) at (2,1);
\coordinate  (D) at (6,1);
\coordinate (E) at (0,3);
\coordinate (F) at (4, 3);
\coordinate  (G) at (2,4);
\coordinate  (H) at (6,4);
\coordinate (J) at (6.3,1.5);

\node at (-0.5,0)[below]{$A$};
\node at (3.5,0)[below]{$B$};
\node at (C)[below]{$C$};
\node at (D)[below]{$D$};
\node at (-0.5,3)[below]{$E$};
\node at (3.5, 3)[below]{$F$};
\node at (G)[above]{$G$};
\node at (H)[above]{$H$};
\node at (6.3,1.5)[right]{$J$};

\node at (3,3.5) [red]{${\mathcal P}$};

\draw [very thick] (E) -- (A) -- (B) -- (F) -- (E) -- (G) -- (H) ; 
\draw [very thick] (H) -- (J) --  (F)--(H) ;
\draw [very thick] (B) -- (J);
\draw [very thick] (J) -- (D) -- (B);

\draw [thick,dotted] (A) -- (C) --(G);
\draw [thick,dotted] (C) -- (D) --(H);

\end{tikzpicture}   
\qquad
\begin{tikzpicture}  [scale= 0.55] % le gros cube et les projections

\coordinate (A) at (0,0);
\coordinate (T) at (4,0);
\coordinate  (C) at (2,1);
\coordinate  (D) at (6,1);
\coordinate (E) at (0,3);
\coordinate (F) at (4, 3);
\coordinate  (G) at (2,4);
\coordinate  (H) at (6,4);
\coordinate (J) at (6.3,1.5);
\coordinate (J2) at (6.3,1.55);

\coordinate  (K) at (3,5.5);
\coordinate (L) at (-4.5,-0.75);
\coordinate (M) at (5.5,-0.75);
\coordinate  (N) at (0.5,1.75);
\coordinate  (O) at (10.5,1.75);
\coordinate (P) at (7.1,0.63);

\coordinate (R) at (intersection of N--L and A--E);
\coordinate (S) at (intersection of H--J and N--O);

\node at (A)[left]{$A$};
\node at (T)[below]{$B$};
\node at (C)[below]{$C$};
\node at (D)[below]{$D$};
\node at (E)[left]{$E$};
\node at (F)[above]{$F$};
\node at (G)[above]{$G$};
\node at (H)[above]{$H$};
\node at (J2)[right]{$J$};

\node at (L)[left]{$E'$};
\node at (M)[right]{$F'$};
\node at (0.4,1.75)[above]{$G'$};
\node at (O)[right]{$H'$};

\draw [very thick] (E) -- (A) -- (T) -- (F) -- (E) -- (G) -- (H) ; 
\draw [very thick] (H) -- (J) --  (F)--(H) ;
\draw [very thick] (T) -- (J);
\draw [very thick] (J) -- (D) -- (T);

\draw (A) -- (C) --(G);
\draw (C) -- (D) --(H);

\draw [blue](K)--(L) ;
\draw[blue] (K)--(G);
\draw [blue](K) --(O) ;
\draw [blue](K) --(M) ;
\draw[dashed,blue] (N) --(G);
\draw [blue](K) -- (P) ;

\draw [red] (L)-- (R);
\draw [red] (S)-- (O);
\draw [dashed,red] (R)--(N)-- (S);
\draw[dashed,red] (N) --(C);
\draw [red] (L) -- (M)-- (O);
\draw [red] (L) --(A) ;
\draw [red] (T) -- (M);
\draw [red](O) -- (D) ;
\draw [red](D) -- (P) --(T);
\draw [red](M) -- (P) --(O);

\node at (K)[above]{$O$};

\end{tikzpicture}   
\bigskip
\caption {Planar representation according to  Cauchy. The pyramid with vertex 
$O$ is supposed translucent.} \label{rep-Cauchy}
\end{figure}

The hole formed by the removed face
 appears outside in the planar representation (see the blue part in figure {\ref{Cauchytriangule}}).

The next step of Cauchy's proof 
is to define a triangulation of $K$ 
by a subdivision of all the polygons. 
Notice that, in the triangulating process, 
the alternating sum $n_0 -n_1 + n_2$ does not change. 
The boundary of the hole 
 consists of edges with the following property: 
Each edge is a face of a triangle that has only this edge as the common edge with the hole.
For example, in figure \ref{Cauchyop} (a), the triangle $\sigma$ 
has the edge $\tau$ as the common edge with the hole. 
 The extension of the hole consists of two operations that we describe in what follows. 
 
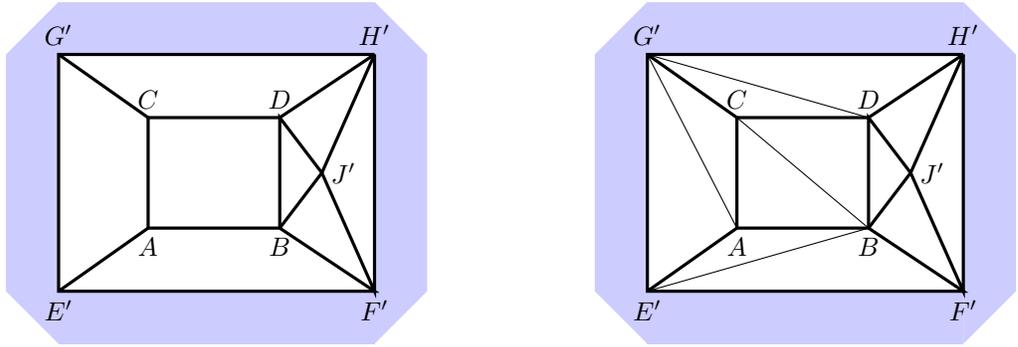
\begin{figure}[H]
\centering
\begin{tikzpicture}  [scale= 0.7] %polygone planaire
\coordinate (A) at (1.7,1.2);
\coordinate (T) at (4.2,1.2);
\coordinate  (C) at (1.7,3.3);
\coordinate  (D) at (4.2,3.3);
\coordinate (E) at (0,0);
\coordinate (F) at (6,0);
\coordinate  (G) at (0,4.5);
\coordinate  (H) at (6,4.5);
\coordinate (J) at (5,2.25);

\coordinate (L) at (0,-1);
\coordinate (M) at (-1,0);
\coordinate  (N) at (-1,4.5);
\coordinate  (O) at (0,5.5);
\coordinate (P) at (6,5.5);
\coordinate (Q) at (7,4.5);
\coordinate  (R) at (7,0);
\coordinate  (S) at (6,-1);

\fill[fill=blue!20] (L) -- (O) -- (N) --(M);
\fill[fill=blue!20] (P) -- (Q) -- (R) -- (S);
\fill[fill=blue!20] (O) -- (G) -- (H)-- (P);
\fill[fill=blue!20] (L) -- (E) -- (F)-- (S);

\node at (A)[below]{$A$};
\node at (T)[below]{$B$};
\node at (C)[above]{$C$};
\node at (D)[above]{$D$};
\node at (E)[below]{$E'$};
\node at (F)[below]{$F'$};
\node at (G)[above]{$G'$};
\node at (H)[above]{$H'$};
\node at (J)[right]{$J'$};

\draw [very thick] (E) -- (A) -- (T) -- (F) -- (E) -- (G) -- (H) ; 
\draw [very thick] (H) -- (J) --  (F)--(H) ;
\draw [very thick] (T) -- (J);
\draw [very thick] (J) -- (D) -- (T);

\draw [very thick] (A) -- (C) --(G);
\draw [very thick] (C) -- (D) --(H);

\end{tikzpicture}  
\qquad\qquad\qquad 
\begin{tikzpicture}  [scale= 0.7] % triangulation
\coordinate (A) at (1.7,1.2);
\coordinate (T) at (4.2,1.2);
\coordinate  (C) at (1.7,3.3);
\coordinate  (D) at (4.2,3.3);
\coordinate (E) at (0,0);
\coordinate (F) at (6,0);
\coordinate  (G) at (0,4.5);
\coordinate  (H) at (6,4.5);
\coordinate (J) at (5,2.25);

\coordinate (L) at (0,-1);
\coordinate (M) at (-1,0);
\coordinate  (N) at (-1,4.5);
\coordinate  (O) at (0,5.5);
\coordinate (P) at (6,5.5);
\coordinate (Q) at (7,4.5);
\coordinate  (R) at (7,0);
\coordinate  (S) at (6,-1);

\fill[fill=blue!20] (L) -- (O) -- (N) --(M);
\fill[fill=blue!20] (P) -- (Q) -- (R) -- (S);
\fill[fill=blue!20] (O) -- (G) -- (H)-- (P);
\fill[fill=blue!20] (L) -- (E) -- (F)-- (S);

\node at (A)[below]{$A$};
\node at (T)[below]{$B$};
\node at (C)[above]{$C$};
\node at (D)[above]{$D$};
\node at (E)[below]{$E'$};
\node at (F)[below]{$F'$};
\node at (G)[above]{$G'$};
\node at (H)[above]{$H'$};
\node at (J)[right]{$J'$};

\draw [very thick] (E) -- (A) -- (T) -- (F) -- (E) -- (G) -- (H) ; 
\draw [very thick] (H) -- (J) --  (F)--(H) ;
\draw [very thick] (T) -- (J);
\draw [very thick] (J) -- (D) -- (T);

\draw [very thick] (A) -- (C) --(G);
\draw [very thick] (C) -- (D) --(H);

\draw (G)--(D);
\draw (A) -- (G);
\draw (C)--(T)--(E);

%\node at (1.7,-0.5)[right]{$B$};
\end{tikzpicture}   
\caption{The polygon $K$ and its triangulation. The hole is in blue.} \label{Cauchytriangule} 
\end{figure}

The first ``operation I'' consists of removing 
from the polyhedron $K$ such a triangle $\sigma$ together 
with its corresponding edge $\tau$ and then the hole is extended. 
 Operation I  does not change the sum $n_0 - n_1 + n_2$, because 
 $n_0$ does not change while $n_1$ and $n_2$  decrease by $1$. 

\begin{figure}[H]
\begin{tikzpicture}  [scale= 0.8] %op�ration I - premier dessin
\coordinate (A) at (1.7,1.2);
\coordinate (T) at (4.2,1.2);
\coordinate  (C) at (1.7,3.3);
\coordinate  (D) at (4.2,3.3);
\coordinate (E) at (0,0);
\coordinate (F) at (6,0);
\coordinate  (G) at (0,4.5);
\coordinate  (H) at (6,4.5);
\coordinate (J) at (5,2.25);

\coordinate (L) at (0,-1);
\coordinate (M) at (-1,0);
\coordinate  (N) at (-1,4.5);
\coordinate  (O) at (0,5.5);
\coordinate (P) at (6,5.5);
\coordinate (Q) at (7,4.5);
\coordinate  (R) at (7,0);
\coordinate  (S) at (6,-1);

\fill[fill=blue!20] (L) -- (O) -- (N) --(M);
\fill[fill=blue!20] (P) -- (Q) -- (R) -- (S);
\fill[fill=blue!20] (O) -- (G) -- (H)-- (P);
\fill[fill=blue!20] (L) -- (E) -- (F)-- (S);

\draw [very thick] (E) -- (A) -- (T) -- (F) -- (E) -- (G) -- (H) ; 
\draw [very thick] (H) -- (J) --  (F) ;
\draw [very thick] (T) -- (J);
\draw [very thick] (J) -- (D) -- (T);

\draw [very thick] (A) -- (C) --(G);
\draw [very thick] (C) -- (D) --(H);

 \filldraw[fill=pink] (J)--(H)--(F);
\draw [very thick,red] (H) -- (F) ;
\node at (6.2,2.25){$\tau$};
\node at (5.55,2.5){$\sigma$};

\draw  [very thick] (G)--(D);
\draw  [very thick] (A) -- (G);
\draw  [very thick] (C)--(T)--(E);

\node at (4,-0.5)[right]{$(a)$};
\node at (0.5,-0.5)[right]{Operation I};
\end{tikzpicture}   
\qquad \qquad
\begin{tikzpicture}  [scale= 0.8] %op�ration II 
\coordinate (A) at (1.7,1.2);
\coordinate (T) at (4.2,1.2);
\coordinate  (C) at (1.7,3.3);
\coordinate  (D) at (4.2,3.3);
\coordinate (E) at (0,0);
\coordinate (F) at (6,0);
\coordinate  (G) at (0,4.5);
\coordinate  (H) at (6,4.5);
\coordinate (J) at (5,2.25);

\coordinate (L) at (0,-1);
\coordinate (M) at (-1,0);
\coordinate  (N) at (-1,4.5);
\coordinate  (O) at (0,5.5);
\coordinate (P) at (6,5.5);
\coordinate (Q) at (7,4.5);
\coordinate  (R) at (7,0);
\coordinate  (S) at (6,-1);

\fill[fill=blue!20] (L) -- (O) -- (N) --(M);
\fill[fill=blue!20] (P) -- (Q) -- (R) -- (S);
\fill[fill=blue!20] (O) -- (G) -- (H)-- (P);
\fill[fill=blue!20] (L) -- (E) -- (F)-- (S);

\draw [very thick] (E) -- (A) -- (T) -- (F) -- (E) -- (G) ; 
\draw [very thick] (J) --  (F) ;
\draw [very thick] (T) -- (J);
\draw [very thick] (J) -- (D) -- (T);

\draw [very thick] (A) -- (C) --(G);
\draw [very thick] (C) -- (D) ;

\fill[fill=blue!20] (F) -- (J) -- (H);
\fill[fill=blue!20] (G) -- (D) -- (H);
 \filldraw[fill=pink] (D)--(H)--(J);
\draw [very thick,red] (D) -- (H) -- (J) ;
\node at (4.9,4.1){$\tau_1$};
\node at (5.75,3.2){$\tau_2$};
\node at (5,3.3){$\sigma$};
\node at (H)[red] {$\bullet$};
\node at (H)[above]{$a$};

\draw  [very thick] (G)--(D);
\draw  [very thick] (A) -- (G);
\draw  [very thick] (C)--(T)--(E);

\node at (4,-0.5)[right]{$(b)$};
\node at (0.5,-0.5)[right]{Operation II};
\end{tikzpicture}   
\caption{The two Cauchy operations.}\label{Cauchyop}
\end{figure}
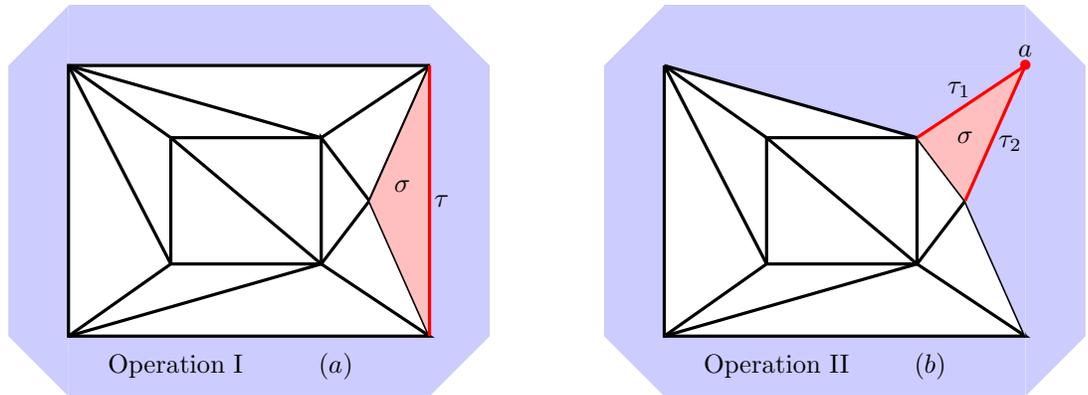

When a situation like the one of figure \ref{Cauchyop} (b) appears, 
where one triangle (here $\sigma$) has two common edges $\tau_1$ and $\tau_2$  
with the hole, 
we use ``operation II'' which consists  of 
removing from the polyhedron $K$ the triangle $\sigma$ together with the two edges  
$\tau_1$ and $\tau_2$ and the vertex $a$ that is the common vertex of  $\tau_1$ and  $\tau_2$. 
Then, the hole is extended. 

Operation II also does not change the sum $n_0 - n_1 + n_2$,
 since $n_0$ and $n_2$ decrease by 1 and  $n_1$  decreases by 2. 

If we take care of keeping the boundary homemorphic 
to a circle, then the hole is extended, using the two operations above, until we have only one triangle. 
In this triangle, we have $n_0 - n_1 + n_2 = 3 - 3 + 1 = 1$. 
Since we removed an (open) polygon at the beginning, we have  already $+1$ 
in the sum  $n_0 -n_1 + n_2$.  
Hence for any convex polyhedron, we have  
$n_0 - n_1 + n_2 = +2$. 
\end{preuve}

\subsection{After Cauchy} \label{depois_Cauchy}

Some authors, in particular Lakatos \cite{Lak},  
criticize Cauchy's proof.  In his book (see \cite{Lak}, pages 11 and 12),  Lakatos provided 
  a counter-example to Cauchy's process. Here, we adapt Lakatos' counter-example
 to our example in figure \ref{rep-Cauchy}.
Extending the hole by removing triangles in the indicated order in figure \ref{ordemLakatos} (a), 
 {we use} operations I and II of Cauchy until the {ninth} triangle. 
 If we remove the  {tenth} triangle, 
the hole disconnects the rest of the figure (see figure {\ref{ordemLakatos} (b)}): 
The eleventh and twelfth triangles 
 are no longer connected. 
Moreover, the boundary of the hole is no longer homeomorphic to a circle. 
Finally,  we observe that if  we remove the  {tenth} triangle, we 
do not remove any vertex, but we remove  two edges and {one} triangle, 
therefore the sum $n_0 - n_1 + n_2$ is not preserved. 

Hence, we need to be very careful concerning the order of the removal of the triangles since 
 it can happen that the hole disconnects the polyhedron $K$.  
Moreover, the boundary of the hole is no longer a curve homeomorphic to a circle because  
it has multiple points.

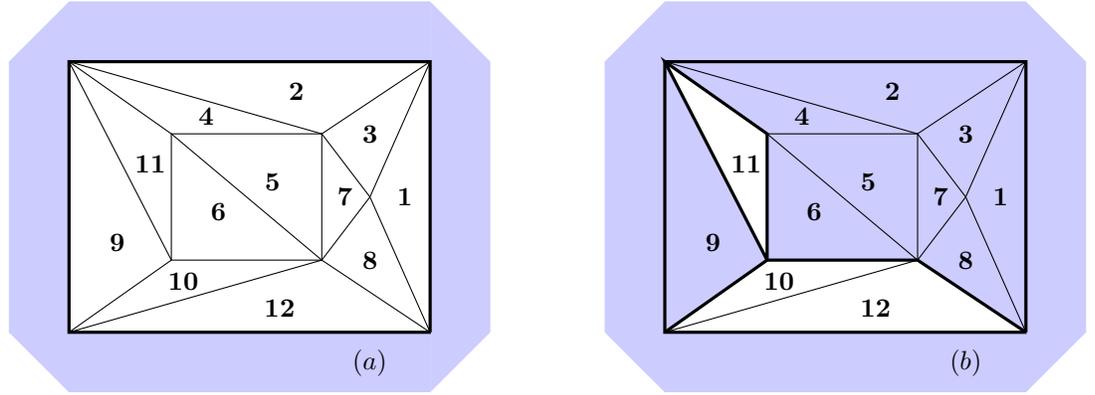
\begin{figure}[H]
\begin{tikzpicture}  [scale= 0.8] % ordre des retraits
\coordinate (A) at (1.7,1.2);
\coordinate (T) at (4.2,1.2);
\coordinate  (C) at (1.7,3.3);
\coordinate  (D) at (4.2,3.3);
\coordinate (E) at (0,0);
\coordinate (F) at (6,0);
\coordinate  (G) at (0,4.5);
\coordinate  (H) at (6,4.5);
\coordinate (J) at (5,2.25);

\coordinate (L) at (0,-1);
\coordinate (M) at (-1,0);
\coordinate  (N) at (-1,4.5);
\coordinate  (O) at (0,5.5);
\coordinate (P) at (6,5.5);
\coordinate (Q) at (7,4.5);
\coordinate  (R) at (7,0);
\coordinate  (S) at (6,-1);

\fill[fill=blue!20] (L) -- (O) -- (N) --(M);
\fill[fill=blue!20] (P) -- (Q) -- (R) -- (S);
\fill[fill=blue!20] (O) -- (G) -- (H)-- (P);
\fill[fill=blue!20] (L) -- (E) -- (F)-- (S);

\draw (E) -- (A) -- (T) -- (F) ;
\draw[very thick] (F) -- (E) -- (G) -- (H)--(F) ; 
\draw (H) -- (J) --  (F)--(H) ;
\draw  (T) -- (J);
\draw  (J) -- (D) -- (T);

\draw  (A) -- (C) --(G);
\draw   (C) -- (D) --(H);

\draw (G)--(D);
\draw (A) -- (G);
\draw (C)--(T)--(E);

\node at (5.3,2.25)[right]{$\bf 1$};
\node at (3.5,4)[right]{$\bf 2$};
\node at (5,3.3){$\bf 3$};
\node at (2,3.6)[right]{$\bf 4$};
\node at (3.1,2.5)[right]{$\bf 5$};
\node at (2.2,2)[right]{$\bf 6$};
\node at (4.3,2.25)[right]{$\bf 7$};
\node at (5,1.2){$\bf 8$};
\node at (0.8,1.5){$\bf 9$};
\node at (1.9,0.85) {$\bf 10$};
\node at (1.35,2.8) {$\bf 11$};
\node at (3.5,0.4) {$\bf 12$};

\node at (5,-0.5) {$(a)$};
\end{tikzpicture}   
\qquad\qquad
\begin{tikzpicture}  [scale= 0.8] % dernier
\coordinate (A) at (1.7,1.2);
\coordinate (T) at (4.2,1.2);
\coordinate  (C) at (1.7,3.3);
\coordinate  (D) at (4.2,3.3);
\coordinate (E) at (0,0);
\coordinate (F) at (6,0);
\coordinate  (G) at (0,4.5);
\coordinate  (H) at (6,4.5);
\coordinate (J) at (5,2.25);

\coordinate (L) at (0,-1);
\coordinate (M) at (-1,0);
\coordinate  (N) at (-1,4.5);
\coordinate  (O) at (0,5.5);
\coordinate (P) at (6,5.5);
\coordinate (Q) at (7,4.5);
\coordinate  (R) at (7,0);
\coordinate  (S) at (6,-1);

\fill[fill=blue!20] (G) -- (C) -- (D) --(H);
\fill[fill=blue!20] (T) -- (F) -- (H) -- (D);
\fill[fill=blue!20] (A) -- (T) -- (D)-- (C);
\fill[fill=blue!20] (G) -- (E) -- (A);

\fill[fill=blue!20] (L) -- (O) -- (N) --(M);
\fill[fill=blue!20] (P) -- (Q) -- (R) -- (S);
\fill[fill=blue!20] (O) -- (G) -- (H)-- (P);
\fill[fill=blue!20] (L) -- (E) -- (F)-- (S);

%\fill [red,opacity=.5] (A) circle (7pt);  % il faudra enlever
 
 \node at (5.3,2.25)[right]{$\bf 1$};
\node at (3.5,4)[right]{$\bf 2$};
\node at (5,3.3){$\bf 3$};
\node at (2,3.6)[right]{$\bf 4$};
\node at (3.1,2.5)[right]{$\bf 5$};
\node at (2.2,2)[right]{$\bf 6$};
\node at (4.3,2.25)[right]{$\bf 7$};
\node at (5,1.2){$\bf 8$};
\node at (0.8,1.5){$\bf 9$};
\node at (1.9,0.85) {$\bf 10$};
\node at (1.35,2.8) {$\bf 11$};
\node at (3.5,0.4) {$\bf 12$};

\draw[very thick] (E)--(A) -- (T) -- (F);
\draw (E)--(T);
\draw[very thick] (F) -- (E) -- (G) -- (H)--(F) ; 
\draw (H) -- (J) --  (F)--(H) ;
\draw  (T) -- (J);
\draw  (J) -- (D) -- (T);
\draw (G)--(D);
\draw [very thick]  (A) -- (C) --(G)--(A);
\draw   (C) -- (D) --(H);
\draw (C)--(T);

\node at (5,-0.5) {$(b)$};
\end{tikzpicture}   
\bigskip
\caption{{Removal} order according to Lakatos.} \label{ordemLakatos}
\end{figure}

In the paper \cite{Li2}, Lima formalized the arguments of Lakatos and described the situation of figures  \ref{figuraCE}. 
In figures (a), (b) and (c),  the extension of the hole, 
 obtained by the removal of the triangle $\sigma$ from the polyhedron $K$, 
on the one hand, changes the sum  $n_0 - n_1 + n_2$ 
and, on the other hand, disconnects the polyhedron $K$. 
 Lakatos' example corresponds to the situation (a) in figure of Lima. 
In the examples of Lima, the boundary of the hole 
is a curve with multiple points. 

We observe that figure (d) is also a case where the boundary is a curve having multiple points. 
However, the sum $n_0 - n_1 + n_2$ is preserved when we 
remove the triangle $\sigma$ from $K$ since 
we remove two vertices $x_2$ and $x_3$, 
three edges $(x_1,x_2)$, $(x_2,x_3)$, $(x_1,x_3)$ and the triangle $\sigma$. 
It seems that Lima did not realize 
that this case is admissible.  
The situation of figure (d) can be also used in the process of Cauchy. 
In this paper, we call this operation ``operation III''. 
 This operation appeared also in \cite{Cel}.  

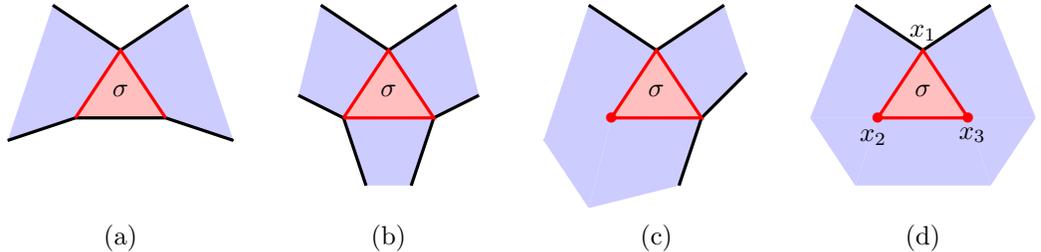
\begin{figure} [H] %figure 1
\begin{tikzpicture}  [scale= 0.30]

\coordinate (A) at (-2,0);
\coordinate (B) at (2,0);
\coordinate  (C) at (0,3);
\coordinate  (D) at (-3,5);
\coordinate (E) at (3,5);
\coordinate (F) at (-5, -1);
\coordinate  (G) at (5,-1);
\coordinate  (H) at (-4,1);
\coordinate  (I) at (4,1);
\coordinate (J) at (-1,-3);
\coordinate (K) at (1,-3);
\coordinate (M) at (-3, -4);
\coordinate  (N) at (1,-3);
\coordinate  (0) at (4,2);
\coordinate  (P) at (-5,0);
\coordinate (Q) at (-3,-3);
\coordinate (R) at (3,-3);
\coordinate (S) at (5,0);

  \draw (E)   --  (C) -- (B) -- (G) ; 

   \filldraw[fill=blue!20] (D)   --  (C) -- (A) -- (F) ; 
     \filldraw[fill=blue!20] (E)   --  (C) -- (B) -- (G) ; 
     
     \filldraw[fill=pink] (A)   --  (C) -- (B) ; 
     
      \draw [very thick](D)   --  (C) -- (E) ; 
        \draw[very thick] (F)   --  (A) -- (B) -- (G) ; 
       \draw[very thick,red]  (A)   --  (C)-- (B) ; 
         
           \node (c) at (0,1.2) {$\sigma$}; 
              \node (a) at (0,-6.3) [above] {$\rm (a)$}; 
   \end{tikzpicture}
\qquad 
    \begin{tikzpicture}  [scale= 0.30] %figure 2

\coordinate (A) at (-2,0);
\coordinate (B) at (2,0);
\coordinate  (C) at (0,3);
\coordinate  (D) at (-3,5);
\coordinate (E) at (3,5);
\coordinate (F) at (-5, -1);
\coordinate  (G) at (5,-1);
\coordinate  (H) at (-4,1);
\coordinate  (I) at (4,1);
\coordinate (J) at (-1,-3);
\coordinate (K) at (1,-3);
\coordinate (M) at (-3, -4);
\coordinate  (N) at (1,-3);
\coordinate  (0) at (4,2);
\coordinate  (P) at (-5,0);
\coordinate (Q) at (-3,-3);
\coordinate (R) at (3,-3);
\coordinate (S) at (5,0);

   \filldraw[fill=blue!20] (D)   --  (C) -- (A) -- (H) ; 
     \filldraw[fill=blue!20] (E)   --  (C) -- (B) -- (I) ; 
      \filldraw[fill=blue!20] (J) -- (A)   --  (B)-- (K) ; 

     \filldraw[fill=pink] (A)   --  (C) -- (B) ; 
      
         \draw [very thick](D)   --  (C) -- (E) ; 
        \draw[very thick] (H)   --  (A) -- (J) ; 
           \draw[very thick] (K)   --  (B) -- (I); 
       \draw[very thick,red]  (A)   --  (C)-- (B)--(A) ; 
   
     \node (c) at (0,1.2) {$\sigma$}; 
 \node (a) at (0,-6.3) [above] {$\rm (b)$}; 
   \end{tikzpicture}
\qquad 
\begin{tikzpicture}  [scale= 0.30] %figure3

\coordinate (A) at (-2,0);
\coordinate (B) at (2,0);
\coordinate  (C) at (0,3);
\coordinate  (D) at (-3,5);
\coordinate (E) at (3,5);
\coordinate (F) at (-5, -1);
\coordinate  (G) at (5,-1);
\coordinate  (H) at (-4,1);
\coordinate  (I) at (4,1);
\coordinate (J) at (-1,-3);
\coordinate (K) at (1,-3);
\coordinate (M) at (-3, -4);
\coordinate  (N) at (1,-3);
\coordinate  (O) at (4,2);
\coordinate  (P) at (-5,0);
\coordinate (Q) at (-3,-3);
\coordinate (R) at (3,-3);
\coordinate (S) at (5,0);

   \fill[fill=blue!20] (D)   --  (C) -- (A) -- (F) ; 
     \fill[fill=blue!20] (F)   --  (A) -- (M)  ; 
        \fill[fill=blue!20] (M)   -- (A) --(B) -- (N) ; 
   \filldraw[fill=blue!20] (E)   --  (C) -- (B) -- (O) ; 
   \draw[very thick,blue!20] (F)--(A)--(H);
   
     \filldraw[fill=pink] (A)   --  (C) -- (B) ; 
              
                    \draw [very thick](D)   --  (C) -- (E) ; 
           \draw[very thick] (K)   --  (B) -- (O); 
       \draw[very thick,red]  (A)   --  (C)-- (B)--(A) ; 
            
            \node (u) at (-2,0) {\textcolor{red}{$\bullet$}}; 
              \node (c) at (0,1.2) {$\sigma$}; 
             \node (a) at (0,-6.3) [above] {$\rm (c)$}; 
   \end{tikzpicture}
\qquad 
\begin{tikzpicture}  [scale= 0.30]%figure4

\coordinate (A) at (-2,0);
\coordinate (B) at (2,0);
\coordinate  (C) at (0,3);
\coordinate  (D) at (-3,5);
\coordinate (E) at (3,5);
\coordinate (F) at (-5, -1);
\coordinate  (G) at (5,-1);
\coordinate  (H) at (-4,1);
\coordinate  (I) at (4,1);
\coordinate (J) at (-1,-3);
\coordinate (K) at (1,-3);
\coordinate (M) at (-3, -4);
\coordinate  (N) at (1,-3);
\coordinate  (0) at (4,2);
\coordinate  (P) at (-5,0);
\coordinate (Q) at (-3,-3);
\coordinate (R) at (3,-3);
\coordinate (S) at (5,0);

   \fill[fill=blue!20] (D)   --  (C) -- (A) -- (P) ; 
     \fill[fill=blue!20] (P)   --  (A) -- (Q)  ; 
        \fill[fill=blue!20] (Q)   -- (A) --(B) -- (R) ; 
        \fill[fill=blue!20] (R)   --  (B) -- (S)  ;    
   \fill[fill=blue!20] (S)   --  (B) -- (C) -- (E) ; 
   
     \filldraw[fill=pink] (A)   --  (C) -- (B) ; 
        
             \draw [very thick](D)   --  (C) -- (E) ; 
       \draw[very thick,red]  (A)   --  (C)-- (B)--(A) ; 
               \node (v) at (2,0) {\textcolor{red}{$\bullet$}}; 
            \node (u) at (-2,0) {\textcolor{red}{$\bullet$}}; 
            
      \node (a) at (0,3) [above] {$x_1$}; 
      \node (b) at (-2.2,-0.1) [below]{$x_2$}; 
      \node (c) at (2.2,0)[below] {$x_3$}; 
         \node (c) at (0,1.2) {$\sigma$}; 
          \node (a) at (0,-6.3) [above] {{$\rm (d)$}}; 
   \end{tikzpicture}

\caption{The counter-examples of Lima: 
 The hole $B$ is in blue and the simplices to be removed are in 
 red and pink.} 
\label{figuraCE}
\end{figure}

Note that with operation III, Lakatos' example is no longer a counter-example: 
come back to figure \ref{ordemLakatos} (a), after removing the ninth triangle, one can remove the twelfth triangle by operation II, then remove the tenth triangle by operation III. We have only  the eleventh triangle for which $n_0 - n_1 + n_2 = +1$ so we are done!

Some authors suggest a strategy to define an order of removal of the triangles that allows the use of 
{Cauchy's method} to obtain 
the result. For example, Kirk \cite{Kirk} 
suggests the following strategy:
 ``{\it There are two important rules to follow when doing this. Firstly, we must always perform 
 {\rm [{Operation II}]} when {it is} possible to do so; if there is a choice between  {\rm [{Operation I}]} 
 and  {\rm [{Operation II}]} we must always choose  {\rm [{Operation II}]}. If we do not, the network 
 may break up into separate pieces. Secondly, we must only remove faces one at a time.}'' 
 We provide, in figure \ref{contra-exemplo-Abigail} (b), an example whose the process follows the 
 rules of the strategy defined by Kirk but the process disconnects the polyhedron. 
 So these rules are not sufficient. It is easy to build an example showing that they are not necessary.

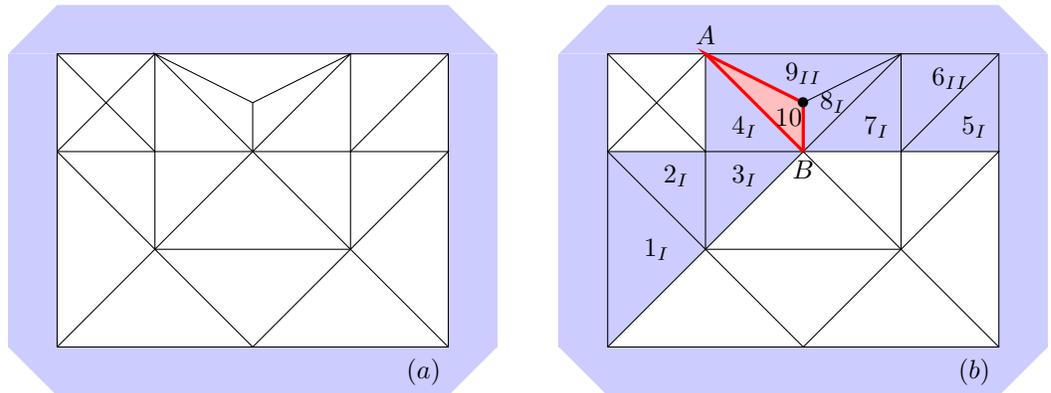
\begin{figure} [H]
\begin{tikzpicture}  [scale= 0.65]

\coordinate (A) at (0,0);
\coordinate (B) at (4,0);
\coordinate  (C) at (8,0);
\coordinate  (D) at (2,2);
\coordinate (E) at (6,2);
\coordinate (F) at (0,4);
\coordinate  (G) at (2,4);
\coordinate  (H) at (4,4);
\coordinate (J) at (6,4);
\coordinate (K) at (8,4);
\coordinate  (L) at (1,5);
\coordinate (M) at (4,5);
\coordinate (N) at (0,6);
\coordinate  (O) at (7,6);
\coordinate  (P) at (2,6);
\coordinate  (Q) at (6,6);
\coordinate (R) at (8,6);

\coordinate (S) at (-1,6);
\coordinate  (T) at (0,7);
\coordinate (U) at (8,7);
\coordinate (V) at (9,6);
\coordinate  (W) at (-1,0);
\coordinate  (X) at (0,-1);
\coordinate  (Y) at (8,-1);
\coordinate (Z) at (9,0);

\fill[fill=blue!20] (S) -- (T) -- (U) --(V);
\fill[fill=blue!20] (W) -- (X) -- (Y)--(Z);
\fill[fill=blue!20] (S) -- (N) -- (A)--(W);
\fill[fill=blue!20] (C) -- (R) -- (V)--(Z);

\draw (A)--(C)--(R)--(N)--(A);
\draw   (A)--(D)--(B)--(E)--(C);
\draw   (F)--(D)--(E)--(K)--(J)--(E);
\draw (E)--(H) -- (D) -- (G) -- (F);
\draw (F) -- (L) -- (G) --(H) -- (J) -- (R);
\draw (J) -- (Q) -- (H)--(M) --(Q) ;
\draw (H) -- (P) --  (M)  ;
\draw (G) -- (P) --  (L)--(N)  ;

\node at (7.5,-0.5) {$(a)$};
\end{tikzpicture}   
\qquad
\begin{tikzpicture}  [scale= 0.65] % 10

\coordinate (A) at (0,0);
\coordinate (B) at (4,0);
\coordinate  (C) at (8,0);
\coordinate  (D) at (2,2);
\coordinate (E) at (6,2);
\coordinate (F) at (0,4);
\coordinate  (G) at (2,4);
\coordinate  (H) at (4,4);
\coordinate (J) at (6,4);
\coordinate (K) at (8,4);
\coordinate  (L) at (1,5);
\coordinate (M) at (4,5);
\coordinate (N) at (0,6);
\coordinate  (O) at (7,6);
\coordinate  (P) at (2,6);
\coordinate  (Q) at (6,6);
\coordinate (R) at (8,6);

\coordinate (S) at (-1,6);
\coordinate  (T) at (0,7);
\coordinate (U) at (8,7);
\coordinate (V) at (9,6);
\coordinate  (W) at (-1,0);
\coordinate  (X) at (0,-1);
\coordinate  (Y) at (8,-1);
\coordinate (Z) at (9,0);

\fill[fill=blue!20] (S) -- (T) -- (U) --(V);
\fill[fill=blue!20] (W) -- (X) -- (Y)--(Z);
\fill[fill=blue!20] (S) -- (N) -- (A)--(W);
\fill[fill=blue!20] (C) -- (R) -- (V)--(Z);

\fill[fill=blue!20] (K) -- (R) -- (Q) --(J);
\fill[fill=blue!20] (A) -- (D) -- (H) -- (P) --(G) -- (F) --(A);
\fill[fill=blue!20] (Q) --(J)--(H)--(M)--(Q);
\fill[fill=blue!20] (Q) --(J)--(H)--(M)--(Q);
\fill[fill=blue!20] (M)--(P) -- (Q);
\fill[fill=pink] (M)--(P) -- (H);

\draw (A)--(C)--(R)--(N)--(A);
\draw   (A)--(D)--(B)--(E)--(C);
\draw   (F)--(D)--(E)--(K)--(J)--(E);
\draw (E)--(H) -- (D) -- (G) -- (F);
\draw (F) -- (L) -- (G) --(H) -- (J) -- (R);
\draw (J) -- (Q) -- (H)--(M) --(Q) ;
\draw (H) -- (P) --  (M)  ;
\draw (G) -- (P) --  (L)--(N)  ;

\draw[very thick,red] (H)--(M) -- (P)--(H);
\node at (M) {$\bullet$};

\node at (7.5,4.5){$5_I$};
\node at (7,5.5){$6_{II}$};
\node at (5.5,4.5){$7_I$};
\node at (1,2){$1_{I}$};
\node at (1.4,3.5){$2_I$};
\node at (2.8,3.5){$3_{I}$};
\node at (2.8,4.5){$4_I$};
\node at (4.6,5){$8_I$};
\node at (4,5.6){$9_{II}$};
\node at (3.7,4.7){$10$};

\node at (P) [above]{$A$};
\node at (H) [below]{$B$};
\node at (7.5,-0.5) {$(b)$};
\end{tikzpicture}    

 \medskip 
 \caption{A {counter-example} to the ``strategy'' suggested by Kirk. 
Triangles are removed in the order of numbers. The index I or II below each number
means that the triangle is removed using 
the corresponding operation I or II,  respectively. After removing the seventh triangle, 
the boundary of the hole is no longer homeomorphic to a circle. }  \label{contra-exemplo-Abigail} 
\end{figure}

The example provided in figure \ref{contra-exemplo-Abigail} is also a counter-example 
showing that even with operation III, Cauchy's process does not work 
with the given order of removal.
In fact, after removing the seventh triangle, we can continue untill the tenth triangle, 
but we cannot remove it because in that case we remove one vertex, three segments and one triangle. 
Notice that the vertices $A$ and $B$ do not belong to the hole. 

After Cauchy, many authors proposed alternating proofs 
of Euler's formula (\ref{Euler}), using original arguments. 
See the site of Eppstein \cite{Epp} 
containing 20 different proofs, 
using tools that appeared only after Cauchy's time. 
In particular, some proofs use the Jordan Lemma  (Jordan \cite{Jo}, 1866). 
 In fact, we will see that Jordan curves will appear in 
the proof of our theorem \ref{teo1} as an artifact.

However, to our knowledge, no one 
has given a strategy for removing the triangles that allows 
only the use of Cauchy's 
method and tools known in Cauchy's time. That is  the goal of our paper.

\subsection{Generalization of the {formula} (\ref{Euler}).}\label{genera}

The formula (\ref{Euler}) was extended  by many authors, in particular by  
Lhuilier, first for orientable surfaces of genus  $g$, as follows:
\begin{equation}\label{Lhuilier}
 n_0 -n_1 + n_2 = 2-2g.
\end{equation}

In the non-orientable case,  the  {formula} is given by (see \cite{Mas}):
$$ n_0 -n_1 + n_2 = 2-g.$$

The general result was provided by Poincar\'e \cite{Po3,Po1} who proved that, for any triangulation of a polyhedron $X$ of  {dimension} $k$, 
 where $n_i$ is the number 
of simplices of dimension $i$, the sum 
\begin{equation}\label{Euler-Poincare}
\chi (X) = \sum_{i=0}^k (-1)^i n_i
\tag{3}
\end{equation}
 does not depend on the triangulation of $X$. 
This sum is called the Euler-Poincar\'e characteristic of  $X$.

Due to the  dimension
convention for polyhedra in section \ref{before}, 
``Euler's formula'' would be better written 
as the Euler-Poincar\'e characteristic of the convex 3-dimensional 
polyhedral in the form 
$$  n_0 -n_1 + n_2 - n_3 = +1.$$
Of course, $n_3$ is $+1$ anyway, but this 
form of Euler's formula seems more suitable.

\section{Cauchy's method in the proof of Euler's formula} \label{OTeo} 

The classical planar {representations} of surfaces such as 
the sphere, the torus, the projective plane and the Klein bottle
are examples of the following situation: The surface is homeomorphic to 
the geometric representation of a polygon $K$, itself homeomorphic to a disc $D$, 
such that there are possible identifications of simplices on the boundary $K_0$. 

\begin{figure}[H]
\begin{tikzpicture}[scale=1.0]

%1 - TRIANGLE
%grand triangle
\draw[->, very thick] (-3.5,  4.5) -- (-3.75, 5); 
\draw [very thick] (-3.75, 5) -- (-4.25, 6);
\draw [->, very thick] (-4.5, 6.5) -- (-4.25, 6);
\draw [->, very thick] (-4.5, 6.5) -- (-4, 6.5);
\draw [very thick] (-4, 6.5) -- (-3, 6.5);
\draw [->, very thick] (-2.5, 6.5) -- (-3, 6.5);
\draw [->, very thick] (-2.5,6.5) -- (-2.75, 6);
\draw [very thick] (-2.75, 6) -- (-3.25, 5);
\draw [->, very thick] (-3.5, 4.5) -- (-3.25, 5);
%petit triangle
\draw[very thick] (-4,5.5) -- (-3,5.5) -- (-3.5,6.5) -- (-4, 5.5); 

\node at (-3.75, 5) [left] {$a$}; 
\node at (-3.25, 5) [right] {$b$}; 
\node at (-4.25, 6) [left] {$a$}; 
\node at (-2.75, 6) [right] {$b$}; 
\node at (-3, 6.6) [above] {$c$};
\node at (-4, 6.5) [above] {$c$};

% 2-  le polygone rectangulaire
% "ligne horizontal egf"
\draw[->, very thick] (0,  5) -- (0.5, 5); 
\draw[ ->, very thick] (0.5, 5) -- (2.5, 5); 
\draw[->, very thick] (2.5,  5) -- (3.5, 5); 
\draw[very thick] (3.5,  5) -- (4, 5); 
\node at (0.5, 5) [below] {$e$}; 
\node at (2.5, 5) [below] {$g$}; 
\node at (3.5, 5) [below] {$f$}; 

% "ligne horizontal cab"
\draw[->, very thick] (0,  6) -- (0.5, 6); 
\draw[ ->, very thick] (0.5, 6) -- (2.5, 6); 
\draw[->, very thick] (2.5,  6) -- (3.5, 6); 
\draw[very thick] (3.5,  6) -- (4, 6); 
\node at (0.5, 6) [above] {$c$}; 
\node at (2.5, 6) [above] {$a$}; 
\node at (3.5, 6) [above] {$b$}; 

% Completar "Rectangle horizontal"
\draw[->, very thick] (0, 5) -- (0, 5.5); 
\node at (0, 5.5) [left] {$d$}; 
\draw[ very thick] (0, 5.5) -- (0,6); 

\draw[->, very thick] (4, 5) -- (4, 5.5); 
\node at (4, 5.5) [right] {$d$}; 
\draw[ very thick] (4, 5.5) -- (4,6); 

\draw[very thick] (3, 5) -- (3,6); 

% "ligne vertical ec"
\draw[->, very thick] (1,  4) -- (1, 4.5); 
\draw[very thick] (1, 4.5) -- (1, 6.5); 
\draw[->, very thick] (1, 7) -- (1, 6.5); 
\draw[-<, very thick] (1, 7) -- (1.55, 7); %%
\node at (1, 4.5) [left] {$e$}; 
\node at (1, 6.5) [left] {$c$}; 
\node at (1.6, 3.8) [left] {$f$}; %%

% "ligne vertical ag"
\draw[very thick] (2,  4) -- (2, 4.5); 
\draw[->, very thick] (2, 6) -- (2, 4.5); 
\draw[->, very thick] (2, 6) -- (2, 6.5); 
\draw[very thick] (2, 7) -- (2, 6.5); 
\draw[-<, very thick] (1, 4) -- (1.55, 4); %%
\node at (2, 4.5) [left] {$g$}; 
\node at (2, 6.5) [left] {$a$}; 
\node at (1.8, 7.25) [left] {$b$}; %%

% Completar "Rectangle vertical"
\draw[very thick] (1, 4) -- (2,4) ; 
\draw[very thick] (1, 7) -- (2,7) ; 

% Diagonals-Triangulation
\draw[very thick] (0, 5) -- (2, 7);
\draw[very thick] (1, 5) -- (2, 6);
\draw[very thick] (1, 4) -- (3, 6);
\draw[very thick] (3, 5) -- (4, 6);

\end{tikzpicture}
\caption{Planar representations of the sphere.}\label{triangsphere2}
\end{figure}
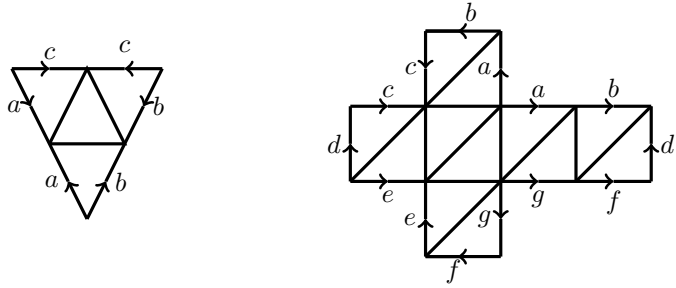

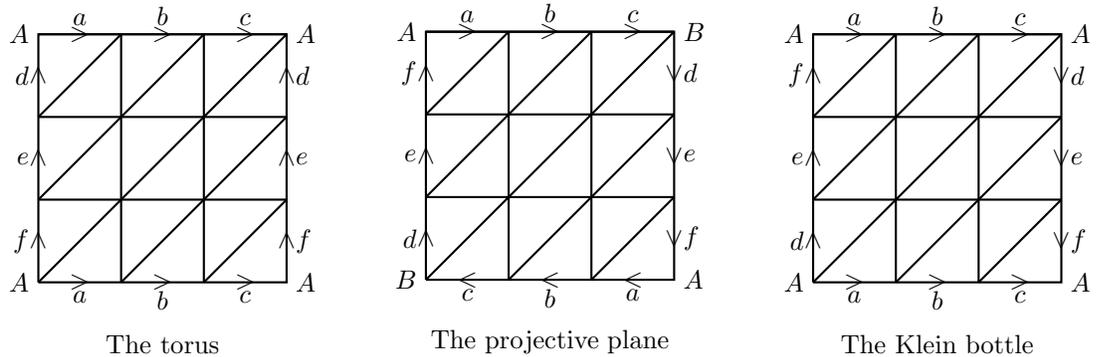
\begin{figure}[H]
\begin{tikzpicture}[scale=0.55]

%Rectangulo
 \draw [thick] (0, 0) -- (6,0) -- (6,6) -- (0,6) -- (0,0); 

%Verticais
\draw [thick] (2, 0) -- (2, 6); 
\draw [thick] (4, 0) -- (4, 6); 
\draw [thick](0, 2) -- (6, 2);
% Horizotais  
\draw [thick] (0, 4) -- (6, 4); 
\draw [thick] (0, 6) -- (6, 6); 

% Diagonais
\draw [thick] (2,6) -- (0,4); 
\draw [thick] (4,6) -- (0,2); 
\draw [thick] (6,6) -- (0,0); 
\draw [thick] (6,4) -- (2,0); 
\draw [thick] (6,2) -- (4,0); 

%setas
\node at (1,0){$>$};
\node at (1,0)[below]{$a$};
\node at (3,0){$>$};
\node at (3,0)[below]{$b$};
\node at (5,0){$>$};
\node at (5,0)[below]{$c$};

\node at (1,6){$>$};
\node at (1,6)[above]{$a$};
\node at (3,6){$>$};
\node at (3,6)[above]{$b$};
\node at (5,6){$>$};
\node at (5,6)[above]{$c$};

\node at (0, 1){$\wedge$};
\node at (0,1)[left]{$f$};
\node at (0, 3){$\wedge$};
\node at (0,3)[left]{$e$};
\node at (0, 5){$\wedge$};
\node at (0,5)[left]{$d$};

\node at (6, 1){$\wedge$};
\node at (6,1)[right]{$f$};
\node at (6, 3){$\wedge$};
\node at (6,3)[right]{$e$};
\node at (6,5){$\wedge$};
\node at (6,5)[right]{$d$};

\node at (0, 0)[left]{$A$};
\node at (6, 0)[right]{$A$};
\node at (0, 6)[left]{$A$};
\node at (6, 6)[right]{$A$};

\node at (3,-1.5) {The torus};
\end{tikzpicture}
\qquad 
\begin{tikzpicture}[scale=0.55]

%Rectangulo
 \draw [thick] (0, 0) -- (6,0) -- (6,6) -- (0,6) -- (0,0); 

%Verticais
\draw [thick] (2, 0) -- (2, 6); 
\draw [thick] (4, 0) -- (4, 6); 
\draw [thick](0, 2) -- (6, 2);
% Horizotais  
\draw [thick] (0, 4) -- (6, 4); 
\draw [thick] (0, 6) -- (6, 6); 

% Diagonais
\draw [thick] (2,6) -- (0,4); 
\draw [thick] (4,6) -- (0,2); 
\draw [thick] (6,6) -- (0,0); 
\draw [thick] (6,4) -- (2,0); 
\draw [thick] (6,2) -- (4,0); 

%setas
\node at (1,0){$<$};
\node at (1,0)[below]{$c$};
\node at (3,0){$<$};
\node at (3,0)[below]{$b$};
\node at (5,0){$<$};
\node at (5,0)[below]{$a$};

\node at (1,6){$>$};
\node at (1,6)[above]{$a$};
\node at (3,6){$>$};
\node at (3,6)[above]{$b$};
\node at (5,6){$>$};
\node at (5,6)[above]{$c$};

\node at (0, 1){$\wedge$};
\node at (0,1)[left]{$d$};
\node at (0, 3){$\wedge$};
\node at (0,3)[left]{$e$};
\node at (0, 5){$\wedge$};
\node at (0,5)[left]{$f$};

\node at (6, 1){$\vee$};
\node at (6,1)[right]{$f$};
\node at (6, 3){$\vee$};
\node at (6,3)[right]{$e$};
\node at (6,5){$\vee$};
\node at (6,5)[right]{$d$};

\node at (0, 0)[left]{$B$};
\node at (6, 0)[right]{$A$};
\node at (0, 6)[left]{$A$};
\node at (6, 6)[right]{$B$};

\node at (3,-1.5) {The projective plane};

\end{tikzpicture}
\qquad
\begin{tikzpicture}[scale=0.55]
 %Rectangulo
 \draw[thick]  (0, 0) -- (6,0) -- (6,6) -- (0,6) -- (0,0); 

%Verticais
\draw [thick] (2, 0) -- (2, 6); 
\draw [thick] (4, 0) -- (4, 6); 
\draw [thick](0, 2) -- (6, 2);
% Horizotais  
\draw [thick] (0, 4) -- (6, 4); 
\draw [thick] (0, 6) -- (6, 6); 

% Diagonais
\draw [thick] (2,6) -- (0,4); 
\draw [thick] (4,6) -- (0,2); 
\draw [thick] (6,6) -- (0,0); 
\draw [thick] (6,4) -- (2,0); 
\draw [thick] (6,2) -- (4,0); 

%setas
\node at (1,0){$>$};
\node at (1,0)[below]{$a$};
\node at (3,0){$>$};
\node at (3,0)[below]{$b$};
\node at (5,0){$>$};
\node at (5,0)[below]{$c$};

\node at (1,6){$>$};
\node at (1,6)[above]{$a$};
\node at (3,6){$>$};
\node at (3,6)[above]{$b$};
\node at (5,6){$>$};
\node at (5,6)[above]{$c$};

\node at (0, 1){$\wedge$};
\node at (0,1)[left]{$d$};
\node at (0, 3){$\wedge$};
\node at (0,3)[left]{$e$};
\node at (0, 5){$\wedge$};
\node at (0,5)[left]{$f$};

\node at (6, 1){$\vee$};
\node at (6,1)[right]{$f$};
\node at (6, 3){$\vee$};
\node at (6,3)[right]{$e$};
\node at (6,5){$\vee$};
\node at (6,5)[right]{$d$};

\node at (0, 0)[left]{$A$};
\node at (6, 0)[right]{$A$};
\node at (0, 6)[left]{$A$};
\node at (6, 6)[right]{$A$};

\node at (3,-1.5) {The Klein bottle};

\end{tikzpicture}
\caption{Planar representations of the torus, the projective plane and the  Klein bottle.} \label{Klein}
\end{figure}

In this section, we prove the following theorem using only Cauchy's method  
({section} \ref{methode_Cauchy}) and sub-triangulations:
\begin{theorem} \label{teo1}
Let $K$ be a triangulated polygon in $\R^2$, 
homeomorphic to a disc $D$, 
with possible identifications of simplices on the boundary $K_0$ of $K$. 
 We have
$$\chi(K) = \chi (K_0) +1.$$
\end{theorem}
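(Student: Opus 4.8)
The strategy is to adapt Cauchy's hole-expansion method to the abstract setting of a triangulated disc $K$ with boundary identifications. I would begin by removing the interior of one triangle $\sigma_1$ whose edges all lie in the interior of $K$ (after a sub-triangulation one can always arrange such a triangle exists; the removal changes $\chi$ by $+1$ since we delete one open $2$-cell, i.e.\ $n_2$ drops by $1$). This opened region is the initial ``hole'' $B$. I would then repeatedly apply Cauchy's operations I, II, and III to enlarge $B$: each step removes a triangle that meets $\partial B$ in one edge (Op.\ I), in two edges sharing a vertex with no other triangle of $K\setminus B$ touching that vertex (Op.\ II), or in all three edges with the two ``interior'' vertices of $\sigma$ otherwise unused (Op.\ III). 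The key bookkeeping fact, already verified in the excerpt, is that each of these three operations leaves $n_0-n_1+n_2$ unchanged.

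The heart of the argument is an invariant: I would maintain that at every stage the hole $B$ is an open disc (its boundary $\partial B$ is a simple closed curve — a Jordan curve in the geometric realisation) and that $K\setminus B$ is still connected. The crucial point is to show that, as long as $K\setminus B$ contains at least one triangle, \emph{some} legal move exists that preserves this invariant. This is exactly where the lifting technique and the construction of a ``suitable pyramid'' mentioned in the introduction come in: one lifts the configuration to a genuine polyhedral surface (a pyramid over the current boundary curve) so that the planar combinatorial picture is realised geometrically, and then a geometric/extremal argument — e.g.\ choosing the triangle adjacent to $\partial B$ that is ``lowest'' or most exposed, expanding by ``successive puddles'' — guarantees that one of Operations I, II, III applies without disconnecting the complement or creating a pinch point in $\partial B$. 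One must check separately that the boundary identifications on $K_0$ do not obstruct this: since $B$ is grown from the interior outward, $\partial B$ only meets $K_0$ after the hole has swept across, and the identifications merely mean that when $\partial B$ finally absorbs a boundary edge it does so consistently on both identified copies.

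When the process terminates, $K\setminus B$ is a single triangle $\sigma_*$ (possibly with some of its edges/vertices identified on $K_0$); for this final triangle one computes its contribution together with the remaining boundary directly. Running the invariant to the end, every triangle of $K$ except $\sigma_*$ has been removed by an operation preserving $n_0-n_1+n_2$, and the final configuration consists of $\sigma_*$ attached along $K_0$; a direct count gives $\chi = \chi(K_0) + 1$, where the extra $+1$ is precisely the one triangle $\sigma_*$ never removed (equivalently, the $+1$ we ``owed'' from opening the first hole). Summing the invariant: $\chi(K)$ equals the value computed at the end, which is $\chi(K_0)+1$.

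\textbf{Main obstacle.} The genuinely hard step is proving that a legal, invariant-preserving move always exists — i.e.\ ruling out the Lakatos/Lima-type obstructions (Figures \ref{ordemLakatos}, \ref{figuraCE}, \ref{contra-exemplo-Abigail}) by \emph{choosing} the removal order wisely rather than removing triangles arbitrarily. Everything else is routine Euler-characteristic arithmetic; the combinatorial/geometric argument that the pyramid-lifting yields an always-available ``puddle'' extension, with $\partial B$ staying a circle and $K\setminus B$ staying connected, is the crux and the part that requires care about the boundary identifications.
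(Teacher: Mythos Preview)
Your plan correctly identifies Cauchy's hole-expansion as the engine and correctly names the crux: guaranteeing that a legal, invariant-preserving move always exists. But the plan halts exactly where the work begins. You write that ``the lifting technique and the construction of a suitable pyramid \ldots\ come in'' and that ``a geometric/extremal argument \ldots\ guarantees that one of Operations I, II, III applies'', yet you never say what that argument is. This is not a detail to be filled in later; it \emph{is} the theorem. The paper's mechanism is concrete: place the origin inside an interior $2$-simplex $\sigma_0$, order the remaining interior vertices $y_1,\dots,y_n$ by increasing Euclidean distance from the origin (perturb to make the distances distinct), and lift $K$ to a pyramid in $\mathbb{R}^3$ by keeping $K_0$ in the base plane while sending each $y_i$ to height $n-i+1$ and the vertices of $\sigma_0$ to height $n+1$. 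Slice by horizontal planes $P_i$ through the lifted vertices and subdivide so the slices become subcomplexes. The one non-obvious fact, proved by observing that the vertical projection back to $K$ is a simplicial bijection, is that every slice $\Pi\cap P_i$ is a simple closed curve. Now remove triangles \emph{band by band} from top to bottom: each band between $P_i$ and $P_{i+1}$ contains no vertices in its open interior, so it is a triangulated annulus, and peeling such an annulus off a circle uses only Operations I and II in an obvious cyclic order. No clever selection is needed, and Operation III is never invoked.

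Two further corrections. First, your terminal state is wrong: the process does not stop at a last triangle $\sigma_*$ but continues until \emph{only $K_0$ remains}; the $+1$ in $\chi(K)=\chi(K_0)+1$ is exactly the face $\sigma_0$ removed at the outset, and your ``$\sigma_*$ attached along $K_0$'' has no clear meaning once all other interior cells are gone. Second, your treatment of boundary identifications is optimistic. Without the height ordering that forces every boundary vertex to the base plane and hence to the very last band, an identified boundary edge could be reached on its two preimages at different stages of the expansion, and then $\partial B$ need not remain a circle. The paper's construction sidesteps this entirely: identifications on $K_0$ play no role until the final count, because the hole meets $K_0$ only when the process is already over.
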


We emphasize that the important point of our proof is that, 
from the given triangulation, 
we provide a sub-triangulation such that we can prove the theorem using only 
%the
 Cauchy's method 
({section} \ref{methode_Cauchy}), without using other tools. 

\begin{proof}[\bf{Proof of theorem \ref{teo1} using only Cauchy's method}] 
Given a polygon $K$  triangulated and homeomorphic to a disc in $\R^2$ 
with possible identifications of the simplices on the boundary  $K_0$ of $K$, 
the proof consists of six steps, as follows.

\medskip 

1) Step 1:  The first step is to construct a ``lifting'' of $K$ into  a pyramidal shape. 
Here, we call the ``pyramid'' only the surface (dimension 2) of the pyramid, 
{\it i.e.} the union of the faces of the pyramid without the base. 

We can assume that the origin $0$ of $\R^2$ lies inside a 2-dimensional simplex $\sigma_0$  %being 
in the interior of the polygon $K$.

%\vskip -0.5 truecm

   \begin{figure}[H] 
\begin{tikzpicture}  [scale= 0.5]

% Le bord du disque
\coordinate [label=left:$b_1$] (B1) at (-6,-0.5);
\coordinate [label=below:$b_6$] (B2) at (-3.5,-4);
\coordinate [label=right:$b_5$] (B3) at (4.5,-4);
\coordinate [label=right:$b_4$] (B4) at (6,-1);
\coordinate [label=right:$b_3$] (B5) at (3.5,4);
\coordinate [label=above:$b_2$] (B6) at (-3,4);
\draw [very thick] (B1)  -- (B2) -- (B3) -- (B4)--(B5) -- (B6) -- (B1);

% Le triangle
\coordinate [label=right:$a_2$]  (A1) at (0,2);
\coordinate [label=left:$a_1$] (A2) at (-1,0);
\coordinate [label=below:$a_3$] (A3) at (2,0);
\draw [very thick] (A1)  -- (A2) -- (A3)-- (A1) ;
\coordinate [label=right:$0$]  (0) at (0.3,0.7);
 \foreach \point in {0}
    \fill [black,opacity=.5] (\point) circle (2pt);

% Les points rouge
\coordinate [label=below:$y_2$] (X1) at (0.8,-2);
\coordinate  [label=left:$y_1$] (X2) at (-2,1);

\draw (B1) -- (X2);
\draw (A2) -- (X2);
\draw (A1) -- (X2);
\draw (B6) -- (X2);
\draw (B2) -- (X2);
\draw (B6) -- (A1);

\draw (A2) -- (X1);
\draw (A3) -- (X1);
\draw (B2) -- (X1);
\draw (B3) -- (X1);
\draw (B3) -- (A3);
\draw (B4) -- (A3);
\draw (B5) -- (A3);
\draw (B5) -- (A1);
\draw (B2) -- (A2);

 \end{tikzpicture}
\caption{The triangulation  of the polygon $K$.}\label{Pyramide70}
\end{figure}
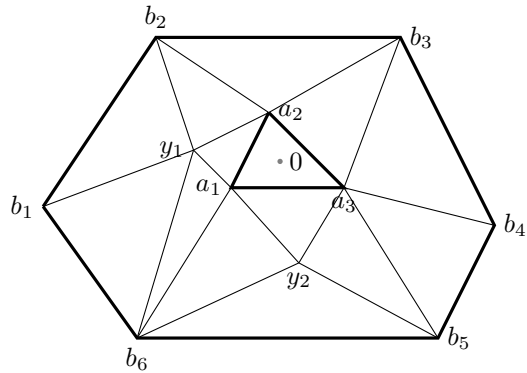

Let us consider the Euclidean metric  in $\R^2$.  
We can assume that the distances from the origin to the vertices of the triangulation are  different, 
{otherwise a small perturbation will not change}  the structure of the 
 simplicial  complex and the proof of the theorem can be processed in the same way.

Let us denote by $a_1, a_2, a_3$ the vertices of $\sigma_0$ 
and by $b_1, \ldots , b_k$ the vertices of the triangulation of $K_0$. 
The other vertices are denoted by the following way: 
We call $y_1$ the vertex nearest to the origin $0$ 
 and $y_{2}, \ldots, y_{n}$ the vertices in the increasing order of distances from $0$. 

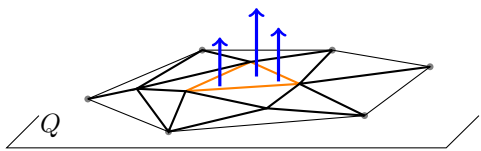
\begin{figure}[H] 
\begin{tikzpicture}  [scale= 0.65]
% Le plan en bas
\draw (-3.76,-3.2) node {$Q$};
\draw (-4,-3) --(-4.66, -3.66) -- (4.33,-3.66) -- (5,-3);
\coordinate (Y1) at (-3,-2.66);
\coordinate (Y2) at (-0.66,-1.66);
\coordinate  (Y3) at (2,-1.66);
\coordinate  (Y4) at (4,-2);
\coordinate  (Y5) at (2.66,-3);
\coordinate   (Y6) at (-1.35,-3.33);
\draw (Y1) -- (Y2) -- (Y3) -- (Y4) --  (Y5) -- (Y6) -- (Y1);
 \foreach \point in {Y1,Y2,Y3,Y4,Y5,Y6}
    \fill [black,opacity=.5] (\point) circle (2pt);
    
%La triangulation en bas
\coordinate (A1) at (-1,-2.5);
\coordinate (A2) at (0.33,-1.9);
\coordinate  (A3) at (1.33,-2.35);
\coordinate  (X1b) at (0.66,-2.85);
\coordinate  (X2b) at (-2,-2.45);
\draw  [thick,orange](A1) -- (A2) -- (A3)--(A1);
\draw[thick] (Y3)--(A2)-- (Y2) -- (X2b) --(A2);
\draw[thick] (Y1)--(X2b)-- (A1) -- (Y6) --(X2b);
\draw[thick] (A1)--(X1b)-- (Y6) ;
\draw[thick] (Y5)--(X1b)-- (A3) -- (Y5);
\draw[thick] (Y4)--(A3) -- (Y3);

\draw[very thick,blue,->] (-0.3,-2.4) -- (-0.3,-1.4);
\draw[very thick,blue,->] (0.45,-2.2) -- (0.45,-0.8);
\draw[very thick,blue,->] (0.9,-2.3) -- (0.9,-1.2);

 \end{tikzpicture}
\caption{The lifting on the polygon.}\label{a}
\end{figure}

%\vfill
%\break 

We construct  a pyramid $\Pi$ in $\R^3$ lying above $K$ by fixing the boundary $K_0$ 
as the base of the pyramid in the horizontal plane $Q= \R^2$ in $\R^3$.   
For $i=0,\ldots, n+1$, we consider the  planes $P_i$ parallel to $Q$ 
having distance $n-i+1$ relative to the base plane $Q$ in $\R^3$. 
Now, for $i=1,\ldots, n$, we denote by $x_i$ the {orthogonal} projection of the point $y_i$ 
to the plane $P_i$.  
In the plane $P_{0}$, we denote by  $u_1, u_2, u_3$ the {orthogonal} 
projections of the points   $a_1, a_2, a_3$
(see figure \ref{Pyramide5} (a)).

\begin{figure}[H] 
\begin{tikzpicture}  [scale= 0.55]

%Le triangle en haut
\coordinate [label=left:$u_1$]  (A1) at (-1,8);
 \coordinate [label=above:$u_2$]  (A2) at (0.33,8.66);
 \coordinate [label=right:$u_3$]  (A3) at (1.33,8);
 \foreach \point in {A1,A2,A3}
    \fill [black,opacity=.5] (\point) circle (2pt);
\draw[thick,orange]  (A1) -- (A2) -- (A3)-- (A1);

% Le ``disque" en bas
 \coordinate [label=left:$b_1$]  (B1) at (-3,-1);
 \coordinate [label=below:$b_2$]  (B2) at (-0.76,0);
 \coordinate [label=below:$b_3$]  (B3) at (1.8,0.2);
  \coordinate [label=right:$b_4$]  (B4) at (4.5,-0.33);
 \coordinate [label=below:$b_5$]  (B5) at (2.66,-1.33);
 \coordinate [label= left:$b_6$]  (B6) at (-1.35,-1.66);

 \foreach \point in {B1,B2,B3,B4,B5,B6}
    \fill [black,opacity=.5] (\point) circle (2pt);
\draw [thick, dashed] (B1)  -- (B2) -- (B3) -- (B4);
\draw  (B6) -- (B5) -- (B4);

% Les points x_1 et x_2
 \coordinate [label=left:$x_2$]  (X2) at (0.66,3);
 \coordinate [label=left:$x_1$]  (X1) at (-2,5.72);
 \foreach \point in {X1,X2}
    \fill [black,opacity=.5] (\point) circle (2pt);

%Les segments (en noir) devant et derri�re
\draw (X1) -- (B1) -- (B6) -- (X1)-- (A1) -- (B6) -- (X2) -- (A1);
\draw (B4) -- (A3)-- (B5) -- (X2) -- (A3);
\draw [dashed] (A2) -- (B2) -- (X1) -- (A2) -- (B3) -- (A3);

% Les plans
\draw (-3.76,7.66) node {$P_{0}$};
\draw (-4,8) --(-4.66, 7.33) -- (4.33,7.33) -- (5,8);

\draw (-3.76,5.33) node {$P_1$};
\draw (-4,5.66) --(-4.66, 5) -- (4.33,5) -- (5,5.66);

\draw (-3.76,3) node {$P_2$};
\draw (-4,3.33) --(-4.66, 2.66) -- (4.33,2.66) -- (5,3.33);

\draw(-4,1) node{$\cdots$};
\draw(5,1) node{$\cdots$};

\draw (-3.76,-1.66) node {$P_{n+1}$};
\draw (-4,-1.33) --(-4.66, -2) -- (4.33,-2) -- (5,-1.33);

% Le plan en bas
\draw (-3.76,-4.7) node {$Q$};
\draw (-4,-4.5) --(-4.66, -5.16) -- (4.33,-5.16) -- (5,-4.5);
\coordinate (Y1) at (-3,-4.16);
\coordinate (Y2) at (-0.66,-3.16);
\coordinate  (Y3) at (2,-3.16);
\coordinate  (Y4) at (4.4,-3.5);
\coordinate  (Y5) at (2.66,-4.5);
\coordinate   (Y6) at (-1.35,-4.83);
\draw [thick] (Y1) -- (Y2) -- (Y3) -- (Y4) --  (Y5) -- (Y6) -- (Y1);
 \foreach \point in {Y1,Y2,Y3,Y4,Y5,Y6}
    \fill [black,opacity=.5] (\point) circle (2pt);
    
%La triangulation en bas
\coordinate (A1) at (-1,-4);
\coordinate (A2) at (0.33,-3.5);
\coordinate  (A3) at (1.53,-4);
\coordinate  (X1b) at (0.66,-4.35);
\coordinate  (X2b) at (-2,-3.95);
\draw  [thick,orange](A1) -- (A2) -- (A3)--(A1);
\draw[thick] (Y3)--(A2)-- (Y2) -- (X2b) --(A2);
\draw[thick] (Y1)--(X2b)-- (A1) -- (Y6) --(X2b);
\draw[thick] (A1)--(X1b)-- (Y6) ;
\draw[thick] (Y5)--(X1b)-- (A3) -- (Y5);
\draw[thick] (Y4)--(A3) -- (Y3);

% les fleches
\draw[very thick,blue,->] (-0.3,-3.9) -- (-0.3,-2.9);
\draw[very thick,blue,->] (0.45,-3.7) -- (0.45,-2.3);
\draw[very thick,blue,->] (0.9,-3.8) -- (0.9,-2.7);

\node at (4.5,-6) {$(a)$};

 \end{tikzpicture}
%\caption{A pir\^amide $\Pi$.}\label{Pyramide4}
\quad\quad\quad\quad
\begin{tikzpicture} [scale= 0.65]

 \coordinate [label=left:$u_1$]  (A1) at (-1,8);
 \coordinate [label=above:$u_2$]  (A2) at (0.33,8.66);
 \coordinate [label=right:$u_3$]  (A3) at (1.33,8);
 \foreach \point in {A1,A2,A3}
    \fill [black,opacity=.5] (\point) circle (2pt);
\draw[thick,orange]  (A1) -- (A2) -- (A3)-- (A1);

% Le ``disque" en bas
 \coordinate [label=left:$b_1$]  (B1) at (-3,-1);
 \coordinate [label=below:$b_2$]  (B2) at (-0.76,0);
 \coordinate [label=below:$b_3$]  (B3) at (1.8,0.2);
  \coordinate [label=right:$b_4$]  (B4) at (4.5,-0.33);
 \coordinate [label=below:$b_5$]  (B5) at (2.66,-1.33);
 \coordinate [label= left:$b_6$]  (B6) at (-1.35,-1.66);

 \foreach \point in {B1,B2,B3,B4,B5,B6}
    \fill [black,opacity=.5] (\point) circle (2pt);
\draw [thick, dashed] (B1)  -- (B2) -- (B3) -- (B4);
\draw  (B6) -- (B5) -- (B4);

% Les points x_1 et x_2
 \coordinate [label=left:$x_2$]  (X2) at (0.66,3);
 \coordinate [label=left:$x_1$]  (X1) at (-2,5.72);
 \foreach \point in {X1,X2}
    \fill [black,opacity=.5] (\point) circle (2pt);

%Les segments (en noir) devant et derri�re
\draw (X1) -- (B1) -- (B6) -- (X1)-- (A1) -- (B6) -- (X2) -- (A1);
\draw (B4) -- (A3)-- (B5) -- (X2) -- (A3);
\draw [dashed] (A2) -- (B2) -- (X1) -- (A2) -- (B3) -- (A3);

% Les plans
\draw (-3.76,7.66) node {$P_{0}$};
\draw (-4,8) --(-4.66, 7.33) -- (4.33,7.33) -- (5,8);

\draw (-3.76,5.33) node {$P_1$};
\draw (-4,5.66) --(-4.66, 5) -- (4.33,5) -- (5,5.66);

\draw (-3.76,3) node {$P_2$};
\draw (-4,3.33) --(-4.66, 2.66) -- (4.33,2.66) -- (5,3.33);

\draw(-4,1) node{$\cdots$};
\draw(5,1) node{$\cdots$};

\draw (-3.76,-1.66) node {$P_{n+1}$};
\draw (-4,-1.33) --(-4.66, -2) -- (4.33,-2) -- (5,-1.33);

\coordinate(J1) at (8,3.16);
\coordinate (C1) at (intersection of A1--B6 and X1--J1);
\draw [thick,red](X1) -- (C1);

\coordinate  (J2) at (8,4.4);
\coordinate(C2) at (intersection of A1--X2 and C1--J2);
\draw [thick,red](C1) -- (C2);

\coordinate (J3) at (4,5.35);
\coordinate (C3) at (intersection of A3--X2 and C2--J3);
\draw [thick,red](C2) -- (C3);

\coordinate (J4) at (4,5.7);
\coordinate (C4) at (intersection of A3--B5 and C3--J4);
\draw [thick,red](C3) -- (C4);

\coordinate (J5) at (4,7);
\coordinate (C5) at (intersection of A3--B4 and C4--J5);
\draw [thick,red](C4) -- (C5);

\coordinate (J6) at (-2,8.4);
\coordinate (C6) at (intersection of A3--B3 and C5--J6);
\draw [thick,dashed,red](C5) -- (C6);

\coordinate (J7) at (-4,8);
\coordinate (C7) at (intersection of A2--B3 and C6--J7);
\draw [thick,dashed,red](C6) -- (C7);

\coordinate (J8) at (-8,6.5);
\coordinate (C8) at (intersection of A2--B2 and C7--J8);
\draw [thick,dashed,red](C7) -- (C8);

\draw [thick,dashed,red](C8) -- (X1);

 \foreach \point in {X1,C1,C2,C3,C4,C5,C6,C7,C8}
    \fill [red,opacity=.5] (\point) circle (2.3pt);
    
% jles points bleus  $Di$

\coordinate (K1) at (8,4);
\coordinate (D1) at (intersection of A3--B5 and X2--K1);
\draw [thick,blue](X2) -- (D1);

\coordinate (K2) at (8,8.5);
\coordinate  (D2) at (intersection of A3--B4 and D1--K2);
\draw [thick,blue](D1) -- (D2);

\coordinate (K3) at (-8,9);
\coordinate (D3) at (intersection of A3--B3 and D2--K3);
\draw [thick,dashed,blue](D2) -- (D3);

\coordinate (K4) at (-8,7);
\coordinate (D4) at (intersection of A2--B3 and D3--K4);
\draw [thick,dashed,blue](D3) -- (D4);

\coordinate (K5) at (-8,5);
\coordinate (D5) at (intersection of A2--B2 and D4--K5);
\draw [thick,dashed,blue](D4) -- (D5);

\coordinate (K6) at (-8,2.5);
\coordinate (D6) at (intersection of X1--B2 and D5--K6);
\draw [thick,dashed,blue](D5) -- (D6);

\coordinate (K7) at (-8,1.8);
\coordinate (D7) at (intersection of X1--B1 and D6--K7);
\draw [thick,dashed,blue](D6) -- (D7);

\coordinate (K8) at (8,-10.5);
\coordinate (D8) at (intersection of X1--B6  and D7--K8);
\draw [thick,blue](D7) -- (D8);

\coordinate (K9) at (8,-1.5);
\coordinate (D9) at (intersection of A1--B6 and D8--K9);
\draw [thick,blue](D8) -- (D9);

\draw [thick,blue](D9) -- (X2);

 \foreach \point in {X2,D1,D2,D3,D4,D5,D6,D7,D8,D9}
    \fill [blue,opacity=.5] (\point) circle (2.3pt);
    
\node at (3.5,-4) {$(b)$};
    
 \end{tikzpicture}
 \caption{The pyramid $\Pi$ and the decomposition $L'$ of the pyramid $\Pi$.}\label{Pyramide5}
\end{figure}
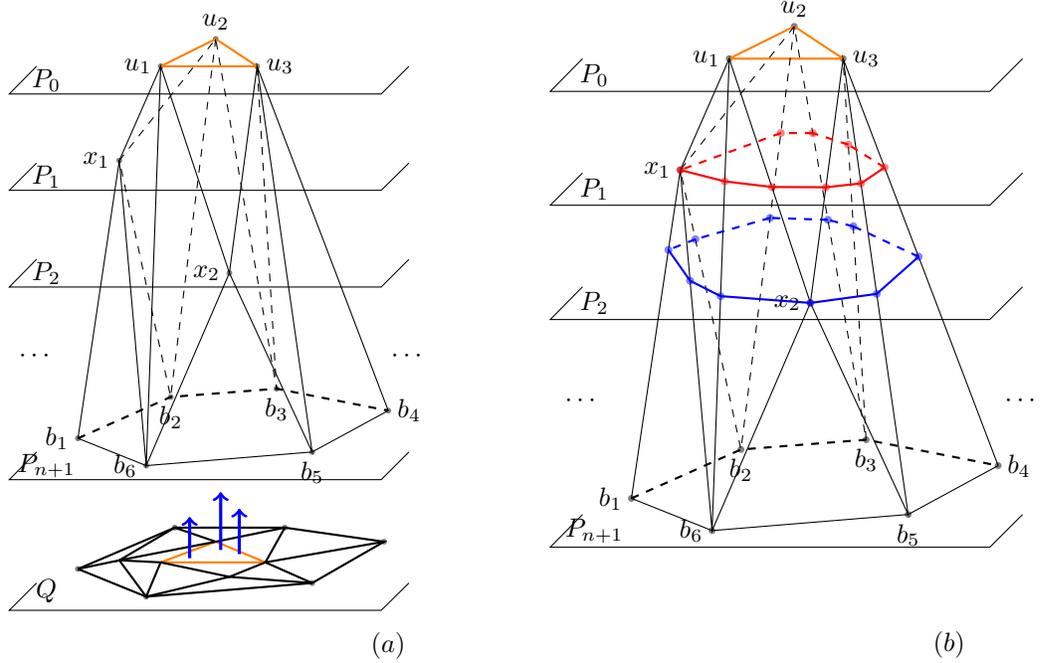

The triangulation of the polygon induces a triangulation $L$ on the pyramid $\Pi$ 
lifting each simplex $[b_i, y_j]$ to $[b_i, x_j]$, 
each $[y_i, y_j]$ to $[x_i, x_j]$ and each $[y_i, a_j]$ to 
$[x_i, u_j]$. 
In the same way, we also lift the 2-dimensional simplices.

\bigskip 
%\vfill\break

2) Step 2: Let us construct a sub-decomposition 
 $L'$ of the triangulation $L$, 
such that the  intersections of the planes with the pyramid are triangulated in the following way: 
Let us define new vertices of $L'$ as the intersection of $1$-dimensional simplices of $L$ 
with the planes  $P_i$. 
In the same way, we define also new $1$-dimensional simplices of $L'$ 
 as the intersections of  $2$-dimensional simplices of $L$  with  the planes $P_i$. 
The decomposition 
$L'$ of the pyramid contains vertices, edges ($1$-dimensional simplices),  
and faces which can be triangles or quadrilaterals. 
 The sum 
$n_0 -n_1 + n_2$ is the same for the triangulation $L$ and the 
decomposition  $L'$ (see figure \ref{Pyramide5} (b)).

\medskip

3) Step 3: Let us define a sub-triangulation $L''$ of $L$ in the following way: each  
quadrilateral is divided into two triangles.  
  The sum 
$n_0 -n_1 + n_2$ is the same for the triangulations $L$ and $L''$ (see figure \ref{Pyramide7}).

\begin{figure}[H] 
\begin{tikzpicture} [scale= 0.8]

 \coordinate [label=left:$u_1$]  (A1) at (-1,8);
 \coordinate [label=above:$u_2$]  (A2) at (0.33,8.66);
 \coordinate [label=right:$u_3$]  (A3) at (1.33,8);
 \foreach \point in {A1,A2,A3}
    \fill [black,opacity=.5] (\point) circle (2pt);
\draw[thick,orange]  (A1) -- (A2) -- (A3)-- (A1);

% Le ``disque" en bas
 \coordinate [label=left:$b_1$]  (B1) at (-3,-1);
 \coordinate [label=below:$b_2$]  (B2) at (-0.76,0);
 \coordinate [label=below:$b_3$]  (B3) at (1.8,0.2);
  \coordinate [label=right:$b_4$]  (B4) at (4.5,-0.33);
 \coordinate [label=below:$b_5$]  (B5) at (2.66,-1.33);
 \coordinate [label= left:$b_6$]  (B6) at (-1.35,-1.66);

 \foreach \point in {B1,B2,B3,B4,B5,B6}
    \fill [black,opacity=.5] (\point) circle (2pt);
\draw [thick, dashed] (B1)  -- (B2) -- (B3) -- (B4);
\draw  (B6) -- (B5) -- (B4);

% Les points x_1 et x_2
 \coordinate [label=left:$x_2$]  (X2) at (0.66,3);
 \coordinate [label=left:$x_1$]  (X1) at (-2,5.72);
 \foreach \point in {X1,X2}
    \fill [black,opacity=.5] (\point) circle (2pt);

%Les segments (en noir) devant et derri�re
\draw (X1) -- (B1) -- (B6) -- (X1)-- (A1) -- (B6) -- (X2) -- (A1);
\draw (B4) -- (A3)-- (B5) -- (X2) -- (A3);
\draw [dashed] (A2) -- (B2) -- (X1) -- (A2) -- (B3) -- (A3); 

% Les plans
\draw (-3.76,7.66) node {$P_{0}$};
\draw (-4,8) --(-4.66, 7.33) -- (4.33,7.33) -- (5,8);

\draw (-3.76,5.33) node {$P_1$};
\draw (-4,5.66) --(-4.66, 5) -- (4.33,5) -- (5,5.66);

\draw (-3.76,3) node {$P_2$};
\draw (-4,3.33) --(-4.66, 2.66) -- (4.33,2.66) -- (5,3.33);

\draw(-4,1) node{$\cdots$};
\draw(5,1) node{$\cdots$};

\draw (-3.76,-1.66) node {$P_{n+1}$};
\draw (-4,-1.33) --(-4.66, -2) -- (4.33,-2) -- (5,-1.33);

% les points  rouges $Ci$

\coordinate(J1) at (8,3.16);
\coordinate (C1) at (intersection of A1--B6 and X1--J1);
\draw [thick,red](X1) -- (C1);

\coordinate  (J2) at (8,4.4);
\coordinate(C2) at (intersection of A1--X2 and C1--J2);
\draw [thick,red](C1) -- (C2);

\coordinate (J3) at (4,5.35);
\coordinate (C3) at (intersection of A3--X2 and C2--J3);
\draw [thick,red](C2) -- (C3);

\coordinate (J4) at (4,5.7);
\coordinate (C4) at (intersection of A3--B5 and C3--J4);
\draw [thick,red](C3) -- (C4);

\coordinate (J5) at (4,7);
\coordinate (C5) at (intersection of A3--B4 and C4--J5);
\draw [thick,red](C4) -- (C5);

\coordinate (J6) at (-2,8.4);
\coordinate (C6) at (intersection of A3--B3 and C5--J6);
\draw [thick,dashed,red](C5) -- (C6);

\coordinate (J7) at (-4,8);
\coordinate (C7) at (intersection of A2--B3 and C6--J7);
\draw [thick,dashed,red](C6) -- (C7);

\coordinate (J8) at (-8,6.5);
\coordinate (C8) at (intersection of A2--B2 and C7--J8);
\draw [thick,dashed,red](C7) -- (C8);

\draw [thick,dashed,red](C8) -- (X1);

% jles points bleus  $Di$

\coordinate (K1) at (8,4);
\coordinate (D1) at (intersection of A3--B5 and X2--K1);
\draw [thick,blue](X2) -- (D1);

\coordinate (K2) at (8,8.5);
\coordinate  (D2) at (intersection of A3--B4 and D1--K2);
\draw [thick,blue](D1) -- (D2);

\coordinate (K3) at (-8,9);
\coordinate (D3) at (intersection of A3--B3 and D2--K3);
\draw [thick,dashed,blue](D2) -- (D3);

\coordinate (K4) at (-8,7);
\coordinate (D4) at (intersection of A2--B3 and D3--K4);
\draw [thick,dashed,blue](D3) -- (D4);

\coordinate (K5) at (-8,5);
\coordinate (D5) at (intersection of A2--B2 and D4--K5);
\draw [thick,dashed,blue](D4) -- (D5);

\coordinate (K6) at (-8,2.5);
\coordinate (D6) at (intersection of X1--B2 and D5--K6);
\draw [thick,dashed,blue](D5) -- (D6);

\coordinate (K7) at (-8,1.8);
\coordinate (D7) at (intersection of X1--B1 and D6--K7);
\draw [thick,dashed,blue](D6) -- (D7);

\coordinate (K8) at (8,-10.5);
\coordinate (D8) at (intersection of X1--B6  and D7--K8);
\draw [thick,blue](D7) -- (D8);

\coordinate (K9) at (8,-1.5);
\coordinate (D9) at (intersection of A1--B6 and D8--K9);
\draw [thick,blue](D8) -- (D9);

\draw [thick,blue](D9) -- (X2);

%Les "diagonales"

\draw [thick,green](A1) -- (C3);
%\draw [thick,green](C1) -- (D8);
\draw [thick,green](X1) -- (D9);
\draw [thick,green](C1) -- (X2);
\draw [thick,green](C4) -- (X2);
\draw [thick,green](C5) -- (D1);
\draw [thick,green](D7) -- (B6);
\draw [thick,green](D2) -- (B5);

\draw [thick,dashed,green](A3) -- (C7);
\draw [thick,dashed,green](C6) -- (D2);
\draw [thick,dashed,green](C7) -- (D3);
\draw [thick,dashed,green](C8) -- (D4);
\draw [thick,dashed,green](C8) -- (D6);
\draw [thick,dashed,green](D3) -- (B4);
\draw [thick,dashed,green](D4) -- (B2);
\draw [thick,dashed,green](D6) -- (B1);

 \end{tikzpicture}
 \caption{The sub-triangulation $L''$ of the pyramid $\Pi$.}\label{Pyramide7}
\end{figure}
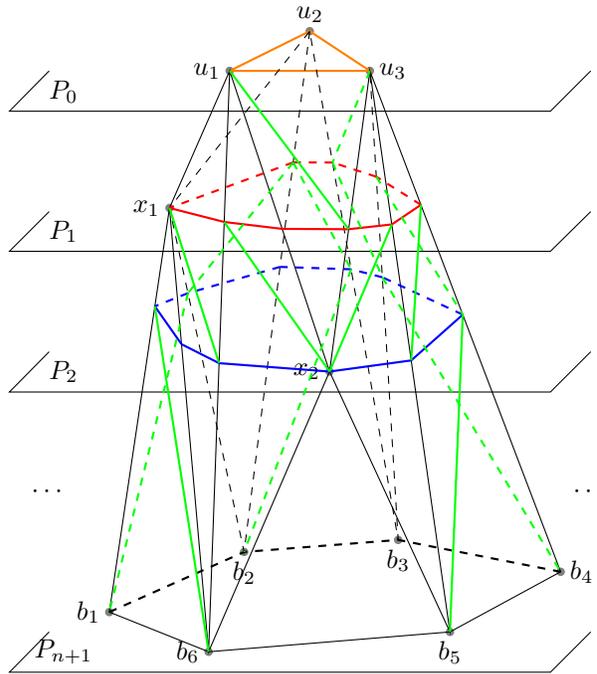

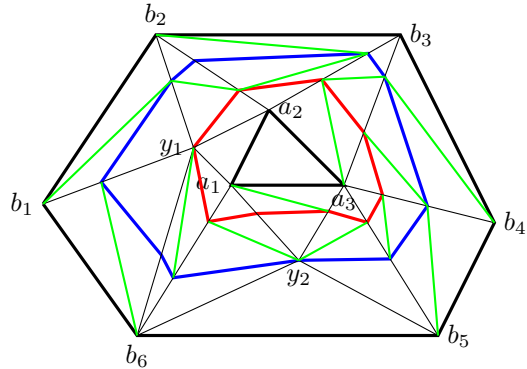
\begin{figure}[H] 
\begin{tikzpicture}  [scale= 0.5]

% Le bord du disque
\coordinate [label=left:$b_1$] (B1) at (-6,-0.5);
\coordinate [label=below:$b_6$] (B2) at (-3.5,-4);
\coordinate [label=right:$b_5$] (B3) at (4.5,-4);
\coordinate [label=right:$b_4$] (B4) at (6,-1);
\coordinate [label=right:$b_3$] (B5) at (3.5,4);
\coordinate [label=above:$b_2$] (B6) at (-3,4);
\draw [very thick] (B1)  -- (B2) -- (B3) -- (B4)--(B5) -- (B6) -- (B1);

% Le triangle
\coordinate [label=right:$a_2$]  (A1) at (0,2);
\coordinate [label=left:$a_1$] (A2) at (-1,0);
\coordinate [label=below:$a_3$] (A3) at (2,0);
\draw [very thick] (A1)  -- (A2) -- (A3)-- (A1) ;

% Les points rouge
\coordinate [label=below:$y_2$] (X1) at (0.8,-2);
\coordinate  [label=left:$y_1$] (X2) at (-2,1);
\coordinate (C11) at (-1.5,-1.5);
\coordinate (C1) at (intersection of X2--C11 and A2--B2);
\coordinate (C21) at (0,-0.7);
\coordinate (C2) at (intersection of C1--C21 and A2--X1);
\coordinate  (C31) at (2,-0.68);
\coordinate  (C3) at (intersection of C2--C31 and A3--X1);
\coordinate (C41) at (3,-1.1);
\coordinate (C4) at (intersection of C3-- C41 and A3--B3);
\coordinate (C51) at (3,-0.3);
\coordinate (C61) at (2.5,1.5);
\coordinate (C71) at (1.5,2.7);
\coordinate (C81) at (-1,2.5);
\coordinate (C5) at (intersection of C4-- C51 and A3--B4);
\coordinate (C6) at (intersection of C5-- C61 and A3--B5);
\coordinate (C7) at (intersection of C6-- C71 and A1--B5);
\coordinate (C8) at (intersection of C7-- C81 and A1--B6);

\draw [very thick,red] (X2) -- (C1)  -- (C2) -- (C3) -- (C4) -- (C5) -- (C6) -- (C7) -- (C8) -- (X2);

% Les points bleus
%\coordinate (X1) at (1,-1.5);
\coordinate (X1) at (0.8,-2);
\coordinate (D11) at (3.2,-2);
\coordinate (D21) at (4.2,-0.5);
\coordinate (D31) at (3,3);
\coordinate (D41) at (2.5,3.5);
%\coordinate (D5) at (-2.3,3.2);
\coordinate (D61) at (-2.1,3.2);
\coordinate (D6) at (-3.2,2.5);
\coordinate (D51) at (-2.3,3.2);
\coordinate (D71) at (-4.55,0.2);
\coordinate (D81) at (-3.2,-1.2);
\coordinate (D91) at (-2.8,-2.5);
\coordinate (D9) at (intersection of X1--D91 and A2--B2);
\coordinate (D8) at (intersection of D9--D81 and X2--B2);
\coordinate (D7) at (intersection of D8--D71 and X2--B1);
\coordinate (D6) at (intersection of D7--D51 and X2--B6);
\coordinate (D5) at (intersection of D6--D61 and A1--B6);
\coordinate (D4) at (intersection of D5--D41 and A1--B5);
\coordinate (D3) at (intersection of D4--D31 and A3--B5);
\coordinate (D2) at (intersection of D3--D21 and A3--B4);
\coordinate (D1) at (intersection of D2--D11 and A3--B3);
\draw [very thick,blue] (X1) -- (D1)  -- (D2) -- (D3) -- (D4)--(D5) -- (D6) -- (D7) -- (D8) -- (D9)--(X1);

\draw (B1) -- (X2);
\draw (A2) -- (X2);
\draw (A1) -- (X2);
\draw (B6) -- (X2);
\draw (B2) -- (X2);
\draw (B6) -- (A1);

\draw (A2) -- (X1);
\draw (A3) -- (X1);
\draw (B2) -- (X1);
\draw (B3) -- (X1);
\draw (B3) -- (A3);
\draw (B4) -- (A3);
\draw (B5) -- (A3);
\draw (B5) -- (A1);
\draw (B2) -- (A2);

\draw [thick,green](A2) -- (C3);
\draw [thick,green](A3) -- (C7);
%\draw [thick,green](C1) -- (D8);
\draw [thick,green](X2) -- (D9);
\draw [thick,green](C1) -- (X1);
\draw [thick,green](C4) -- (X1);
\draw [thick,green](C5) -- (D1);
\draw [thick,green](C6) -- (D2);
\draw [thick,green](C7) -- (D3);
\draw [thick,green](C8) -- (D4);
\draw [thick,green](C8) -- (D6);
\draw [thick,green](D7) -- (B2);
\draw [thick,green](D2) -- (B3);
\draw [thick,green](D3) -- (B4);
\draw [thick,green](D4) -- (B6);
\draw [thick,green](D6) -- (B1);

 \end{tikzpicture}

\caption{The sub-triangulation $K''$ of the polygon.} \label{Pyramide71}
\end{figure}

4) Step 4: We will prove that the intersection of $L''$ with each plane $P_i$ 
is a curve homeomorphic to a circle. 
 %Firstly, 
 First, we show that the projection of  $L''$ to the plane $Q$ provides a sub-triangulation $K''$ 
of the polygon $K$ (figure \ref{Pyramide71}).  
In fact, by construction, since there is no vertical edges in the pyramid, then  the {orthogonal} projection $\pi$ of the pyramid to $Q$ is a bijection between the   triangulations $L''$ and $K''$. Notice that each vertex of $K''$ corresponds to a vertex of $L''$ and, in the same way, for the edges 
and the triangles of $K''$ and $L''$, respectively.  
Moreover, in the subdivision $L''$, 
each edge is the common edge of exactly two triangles, and 
%so 
likewise in $K''$. 
 That implies that $K''$ is a triangulation. 

Now, we prove, by induction, that the intersection of $L''$ with each plane $P_i$ 
is a curve homeomorphic to a circle. We see that $L \cap P_{0}$, which is the boundary of the triangle  $\sigma_0$ denoted by 
$B_{0}$, is homeomorphic to a circle. Assume that $B_i$ is homeomorphic to a circle but  $B_{i+1}$ is not homeomorphic to a circle. Then $L''$ has multiple points in the plane $P_{i+1}$ (see figure \ref{duplo}).  By projection, $K''$ is no longer a triangulation. 
That provides a contradiction.

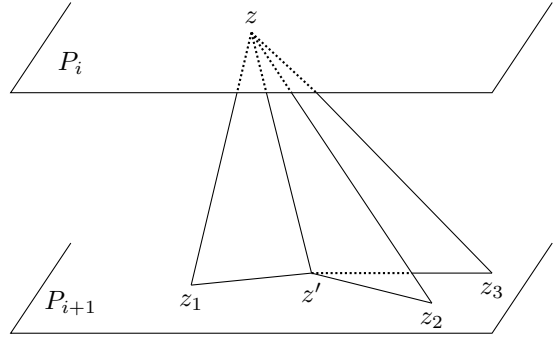
\begin{figure}[H] 
\begin{tikzpicture} [scale= 0.8]

% plano $P_{i-1}$
\draw (1, 1.5) -- (0,0) -- (8,0) -- (9, 1.5);
\node at (1,0.5) {$P_{i+1}$};
% plano $P_{i}$
\draw (1, 5.5) -- (0,4) -- (8,4) -- (9, 5.5);
\node at (1,4.5) {$P_{i}$};

%b - b_1 - x_1

\draw[thick,densely dotted] (4, 5) -- (3.7619,4); 
\draw (3.7619,4) -- (3, 0.8); 
\node at (4,5) [above]{$z$};
\node at (3, 0.8) [below]{$z_1$};

%b - b_2 - a

\draw[thick,densely dotted] (4, 5) -- (4.25,4); 
\draw (4.25,4) -- (5, 1); 
\node at (5,1) [below]{$z'$};

%b - b_3 - x_2

\draw[thick,densely dotted] (4, 5) -- (4.6666,4); 
\draw (4.6666,4) -- (7, 0.5); 
\node at (7,0.5) [below]{$z_2$};

%b - b_4 - x_3
\draw[thick,densely dotted] (4, 5) -- (5.0811,4); 
\draw (5.0811,4) -- (8, 1); 
\node at (8,1) [below]{$z_3$};

%x_1 -- a
\draw (3, 0.8) -- (5,1); 

% a -- x_2
\draw (5,1) -- (7,0.5); 

% a -- x_3 
\draw[thick,densely dotted] (5,1) -- (6.6667, 1); 
\draw (6.6667, 1) -- (8,1); 

\end{tikzpicture}
\medskip
\caption{Not admisible picture: The intersection $B_i$ of $L$ with each plane  $P_i$ can not have multiple points.}\label{duplo}
\end{figure}

5) Step 5: Apply Cauchy's method (section \ref{methode_Cauchy}): 
Now, let us apply Cauchy's method on the pyramid starting 
%to remove 
 by removing 
the triangle $\sigma_0$. 
 Assume that we have already removed all triangles above the plane $P_i$. 
We will prove that if we remove all the triangles in the band  situated between $P_i$ and $P_{i+1}$, the sum 
 $n_0 - n_1 + n_2$ does not change. 
This fact can be established 
processed
 since the open band between 
 $B_{i}$ and $B_{i+1}$ does not possess vertices, as follows: 
 %like the following: 
 Let us fix a triangle $(\alpha_0, \alpha_1,\beta_0)$  of the band between $B_{i}$ and $B_{i+1}$, 
where the vertices $\alpha_0$ and $\alpha_1$ belong to $B_i$ and $\beta_0$ belongs to  $B_{i+1}$. 
%Firstly, 
First, {we remove} the triangle $(\alpha_0, \alpha_1,\beta_0)$ by operation I, 
without changing the sum $n_0 - n_1 + n_2$. 
Now, the edge  $(\alpha_1,\beta_0)$ is an edge of either the triangle $(\alpha_1,\beta_0, \beta_1)$, where $\beta_1 \in B_{i+1}$ (see figure \ref{situa} (a)), 
or of the triangle $(\alpha_1, \alpha_2,\beta_0)$, where $\alpha_2 \in B_{i}$ (see figure \ref{situa} (b)). 
In the first case, the triangle $(\alpha_1,\beta_0, \beta_1)$ can be removed by operation I of  
Cauchy's process, and in the second case 
 the triangle $(\alpha_1, \alpha_2,\beta_0)$ can be removed by operation II.  In both of these two cases, the sum $n_0 - n_1 + n_2$ is not changed.

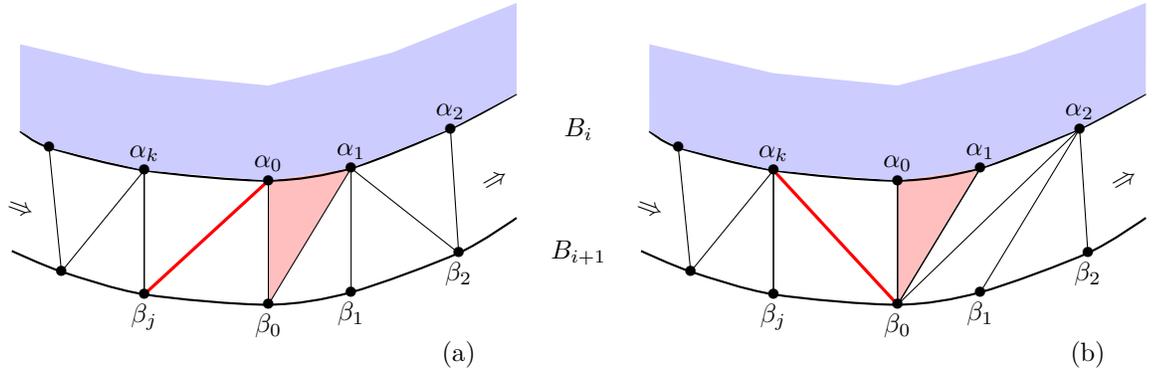
\begin{figure} [H]

 \begin{tikzpicture}  [scale= 0.55]
 
 \node (1) at (-6,2.5) [rotate=340] {$\Rightarrow$};
\coordinate (a) at (-5.3,4) ;
\coordinate (ak) at (-3,03.45);
\coordinate (a0) at (0,3.2) ;
\coordinate (a1) at (2,3.5) ;
\coordinate (a2) at (4.4,4.45) ;

\node (Bi) at (7.5,4.45) {$B_i$};
          
\coordinate (b) at (-5,1);
\coordinate (bj) at (-3,0.45); 
\coordinate (b0) at (0,0.2) ;
\coordinate (b1) at (2,0.5) ;
\coordinate (b2) at (4.6,1.45) ;     

\node (Bi1) at (7.5,1.45) {$B_{i+1}$};

\node (2) at (5.5,3.2)[rotate=30]  {$\Rightarrow$};

 \fill[fill=pink]   (a0) -- (b0)-- (a1) -- (a0);
 
\draw[thick] plot[smooth] coordinates {(-6,4.4)(-5.3,4) (-3,03.45) (0,3.2) (2,3.5) (4.4,4.45) (6,5.3)};     
\draw[thick] plot[smooth] coordinates {(-6.2,1.5)(-5,1) (-3,0.45) (0,0.2) (2,0.5) (4.6,1.45) (6,2.3)};     

\coordinate (x1) at (-6,6.5);
\coordinate (x2) at (-3,5.8);
\coordinate (y) at (0,5.5);
\coordinate (x3) at (3,6.3);
\coordinate (x4) at (6,7.5);
 \fill[fill=blue!20]  (x1) -- (-6,4.4)-- (-5.3,4)-- (-3,03.45) --(0,3.2) -- (2,3.5)-- (4.4,4.45) --(6,5.3)-- 
 (x4) -- (x3) -- (y) -- (x2) -- (x1) ;

\draw (a) -- (b) --(ak) --(bj) ;
 \draw[very thick,red] (a0) -- (bj);
\draw (a0) -- (b0) -- (a1) -- (b2) -- (a2);
\draw (ak) -- (bj);
\draw (a1) -- (b1);

\filldraw (a) node {$\bullet$} ;
\filldraw (ak) node {$\bullet$} node [above] {$\alpha_k$};
\filldraw (a0) node {$\bullet$} node [above] {$\alpha_0$};
\filldraw (a1) node {$\bullet$} node [above] {$\alpha_1$};
\filldraw (a2) node {$\bullet$} node [above] {$\alpha_2$};

\filldraw (b) node {$\bullet$} ;
\filldraw (bj) node {$\bullet$} node [below] {$\beta_j$};
\filldraw (b0) node {$\bullet$} node [below] {$\beta_0$};
\filldraw (b1) node {$\bullet$} node [below] {$\beta_1$};
\filldraw (b2) node {$\bullet$} node [below] {$\beta_2$};

\node at (4.6,-1) {(a)};
 \end{tikzpicture}
 \begin{tikzpicture} [scale= 0.55]

 \node (1) at (-6,2.5) [rotate=340]{$\Rightarrow$};
\coordinate (a) at (-5.3,4) ;
\coordinate (ak) at (-3,03.45);
\coordinate (a0) at (0,3.2) ;
\coordinate (a1) at (2,3.5) ;
\coordinate (a2) at (4.4,4.45) ;
          
\coordinate (b) at (-5,1);
\coordinate (bj) at (-3,0.45); 
\coordinate (b0) at (0,0.2) ;
\coordinate (b1) at (2,0.5) ;
\coordinate (b2) at (4.6,1.45) ;         

\node (2) at (5.5,3.2)[rotate=30] {$\Rightarrow$};

 \fill[fill=pink]   (a0) -- (b0)-- (a1) -- (a0);
 
\draw[thick] plot[smooth] coordinates {(-6,4.4)(-5.3,4) (-3,03.45) (0,3.2) (2,3.5) (4.4,4.45) (6,5.3)};     
\draw[thick] plot[smooth] coordinates {(-6.2,1.5)(-5,1) (-3,0.45) (0,0.2) (2,0.5) (4.6,1.45) (6,2.3)};     

\coordinate (x1) at (-6,6.5);
\coordinate (x2) at (-3,5.8);
\coordinate (y) at (0,5.5);
\coordinate (x3) at (3,6.3);
\coordinate (x4) at (6,7.5);
 \fill[fill=blue!20]  (x1) -- (-6,4.4)-- (-5.3,4)-- (-3,03.45) --(0,3.2) -- (2,3.5)-- (4.4,4.45) --(6,5.3)-- 
 (x4) -- (x3) -- (y) -- (x2) -- (x1) ;

\draw (a) -- (b) --(ak) --(bj) ;
 \draw[very thick,red] (ak) -- (b0);
\draw (a0) -- (b0) -- (a1);
\draw (a0) -- (b0) -- (a1);
\draw (b0) -- (a2);
\draw (ak) -- (bj);
\draw (b2)--(a2) -- (b1);

\filldraw (a) node {$\bullet$} ; 
\filldraw (ak) node {$\bullet$} node [above] {$\alpha_k$};
\filldraw (a0) node {$\bullet$} node [above] {$\alpha_0$};
\filldraw (a1) node {$\bullet$} node [above] {$\alpha_1$};
\filldraw (a2) node {$\bullet$} node [above] {$\alpha_2$};

\filldraw (b) node {$\bullet$} ;
\filldraw (bj) node {$\bullet$} node [below] {$\beta_j$};
\filldraw (b0) node {$\bullet$} node [below] {$\beta_0$};
\filldraw (b1) node {$\bullet$} node [below] {$\beta_1$};
\filldraw (b2) node {$\bullet$} node [below] {$\beta_2$};
\node at (4.6,-1) {(b)};
  \end{tikzpicture}

%  Situation A  \hskip 7truecm   Situa\c c\~ao B 
\caption{Going from $B_i$ to $B_{i+1}$.}\label{situa}
\end{figure}

We continue the process for all the triangles of the band,  
all of which are one of two cases above, 
%which are all of the two above cases, 
until we reach the last vertices of $B_i$ and $B_{i+1}$ situated before going back to  $\alpha_0$ and $\beta_0$, respectively. 
We call these vertices $\alpha_k$ and $\beta_j$. 
We have two possible situations (a) and (b) 
(see figure \ref{situa}). 
In situation (a), the last remaining triangles are $(\alpha_k, \alpha_0, \beta_j)$ 
and $(\alpha_0, \beta_j, \beta_0)$. 
In this case, these triangles  
can be removed in this order using  
operation II.  
In situation (b), the remaining triangles $(\alpha_k, \beta_j,\beta_0)$  and $(\alpha_k, \alpha_0,\beta_0)$
 can be removed in this order using also operation II. In the two cases, the sum $n_0 - n_1 + n_2$ is not changed.

\medskip 

6) Step 6: The conclusion. 

\medskip

The process continues until we reach the boundary 
%process holds until the boundary 
 $B_{n+1}$ of the hole, which is the boundary $K_0$  of $K$,  
and is also the intersection of $L''$ with the plane $P_{n+1}$.

Let us denote by $n_0^K$, $n_1^K$ and $n_2^K$ respectively the numbers of vertices, edges and triangles of the triangulation $K$ and let us use the same notation for the sub-triangulation $K''$ and the triangulation $K_0$. We have 
$$n_0^K - n_1^K + n_2^K = n_0^{K''} - n_1^{K''}  + n_2^{K''} = n_0^{K_0}  - n_1^{K_0} +1.$$ 
The first equality follows from the fact that the sub-triangulation process does not change 
the alternating sum and the second equality comes 
from the fact that we removed the triangle $\sigma_0$ 
at the beginning and that $n_2^{K_0} = 0$. Here  we take the identifications of simplices of $K_0$ into account.

Given a triangulation $K$ of the polygon, the result does not depend on the choices made. 
\end{proof}

\begin{proof}[{\bf Proof of theorem \ref{Euler's Theorem} using Cauchy's method}]
Let $\widehat{K}$ be a convex polyhedron. 
We proceed with the planar representation $K$ of $\widehat{K}$, according to the 
first step of Cauchy's proof (see figure \ref{rep-Cauchy}). Notice that $K$ is a polygon without any identification of simplices on its boundary $K_0$. 
Then $n_0^{K_0} - n_1^{K_0} + n_2^{K_0} = 0.$ 
Theorem \ref{teo1} implies that $n_0^{\widehat{K}} - n_1^{\widehat{K}} + n_2^{\widehat{K}} = +2$, taking into account the removed polygon ${\mathcal P}$ in the first step of Cauchy's proof. 
Since theorem \ref{teo1}  is proved using only Cauchy's method, then Euler's formula is also proved by using only Cauchy's method.
\end{proof} 

Before going further with applications, let us provide some remarks on the proof of theorem \ref{teo1}.

\begin{remark}
The stereographic projection proof of 
%the 
 Euler's formula is a particular case of the proof.
\end{remark}

\begin{remark}
There are other ways to define an order of the vertices of the polygon to be able to draw the pyramid, 
without using the Euclidean distance in $\R^2$. 

One possible way  is the use of notion of distance between  two vertices as
the least number of edges in an edge path joining them. As in the proof of theorem \ref{teo1}, 
choose a $2$-dimensional simplex
$\sigma_0$ in the interior of the polygon $K$ and define distance $0$ for the three vertices of $\sigma_0$.
%Deciding 
 Determining any order between vertices whose distance to vertices of  $\sigma_0$ is 1, 
one continues the ordering determining 
%deciding 
any order between 
vertices whose distance to vertices of  $\sigma_0$ is 2, etc. Then one proceeds with 
%proceeds to 
 the construction of the pyramid. 

Another way would be to start the proof of theorem \ref{Euler's Theorem} 
with the convex polyhedron and order the $2$-dimensional faces
according to the shelling process (see \cite{Zie} and \cite{BM}). The $2$-dimensional faces of
the polyhedron are dual to  the vertices of the polar polyhedron. 
One obtains an 
order on the vertices of the polar polyhedron. One continues 
the proof using the polar polyhedron 
instead of the original polyhedron, knowing that the sum $n_0-n_1+n_2$ is the 
same for the polyhedron and its polar. 
%Notice that 
 However, the shelling is a tool which was defined well after 
Cauchy's time, so it is not acceptable in our context. We mention it just for the sake of completeness. 
\end{remark}

\begin{remark}
In step 4 of the  proof, the projection on the plane $Q$ of the intersection of each plane $P_i$ 
with the pyramid  is a Jordan curve passing through $y_i$. 
Moreover, in step 5 of our proof, we make it clear that if the boundary of the extended hole is  
homeomorphic to a circle, then Cauchy's process works.  
It is then possible to proceed in 
step 5 either with the sub-triangulation $L''$ of  
the pyramid or with the sub-triangulation $K''$ of  the 
polygon $K$.
\end{remark}

\begin{remark}
In step 5 of the proof we use only the operations I and II of Cauchy.
Observing that if we change the order of removal 
%the removal's order 
 of the last remaining triangles, 
for example, in situation (a), if we remove the triangle $(\alpha_0, \beta_j, \beta_0)$ and thereafter the triangle $(\alpha_k, \alpha_0, \beta_j)$,  
we will use first operation I of Cauchy and then the operation that we called  operation III in  section \ref{depois_Cauchy} (see figure \ref{figuraCE} (d)). 
Here also, we do not change the sum $n_0 - n_1 + n_2$. 

\end{remark}

\begin{remark} 
%Like 
As we emphasized at the beginning of this section, 
 we use in the  proof only Cauchy's method 
(section \ref{methode_Cauchy})  without other tools. 
We know very well that there exist ``modern and faster'' ways to prove theorem \ref{teo1}. 
However, these proofs use tools that appeared after Cauchy's time, 
%use tools appearing after  the Cauchy's time, 
in particular some proofs use the Jordan Lemma, which, as we have seen,  
appears as an artifact  in our proof.
\end{remark}

\section{ Applications}\label{applications}

In the following, using theorem \ref{teo1} as the main tool, 
we prove that the sum $n_0 - n_1 + n_2$ does not depend on the triangulation in the case of 
the sphere, the torus, the projective plane, the Klein bottle and even for a singular surface: the pinched torus.  In each case, we use a planar {representation}  of the surface homeomorphic to a disc 
with possible identifications on the boundary, and the following lemma 
concerning the ``cutting surfaces''  technique that was introduced 
%of ``cutting surfaces'' idea that, in general, has been introduced 
by Alexander Veblen in a seminar in 1915 
(see \cite{Bra}). This idea is well developped in the book by Hilbert and Cohn-Vossen 
\cite{HC}, in particular for the surfaces that we give as examples.

The following ``cutting'' lemma will be used in the forthcoming proofs.

\begin{lem} \label{lemageneral}
Let $T$ be a triangulation of a compact surface. Let $\Gamma$  be a continuous simple curve in $S$. 
There exists a sub-triangulation $T'$ of $T$, with 
 curvilinear simplices,  
compatible with the curve (that means $\Gamma$ is an union of segments of $T'$) 
 in such the way that the number 
$n_0 - n_1 + n_2$ is the same for $T$ and $T'$. 
\end{lem}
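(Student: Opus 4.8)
The statement is a local refinement result: given a triangulation $T$ of a compact surface and a simple curve $\Gamma$, produce a subdivision $T'$ adapted to $\Gamma$ that preserves $n_0-n_1+n_2$. The natural approach is to reduce everything to the one fact we already know how to use, namely that starring a simplex (a ``stellar subdivision'') and, more generally, any subdivision obtained by cutting a triangle with a curvilinear arc does not change the alternating sum. The key observation, which I would establish first as the engine of the proof, is the following elementary bookkeeping lemma: if a single triangle $\Delta$ of the current triangulation is subdivided by adding a finite collection of new vertices and arcs inside $\Delta$ (so that $\Delta$ is replaced by a curvilinear triangulated disc $D$ with $\partial D = \partial \Delta$, no new vertices on $\partial\Delta$), then $n_0-n_1+n_2$ is unchanged. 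This is exactly Theorem~\ref{teo1} applied to $D$: $\chi(D)=\chi(\partial D)+1 = \chi(\partial\Delta)+1 = \chi(\Delta)$ since $\chi(\partial\Delta)=0$ and $\chi(\Delta)=1$ (three vertices, three edges, one triangle), and replacing $\Delta$ by $D$ changes the global count by $\chi(D)-\chi(\Delta)=0$. When a new vertex or arc must be introduced on an edge shared by two triangles, I would handle the two triangles simultaneously by the same argument applied to their union, or equivalently first star the shared edge and then argue triangle-by-triangle; either way the alternating sum is preserved because it is preserved on each piece relative to its (unchanged) boundary.

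**Carrying it out.** First I would reduce to the piecewise-linear picture: choose a triangulation fine enough (a barycentric subdivision if needed, which by the engine lemma does not change $n_0-n_1+n_2$) so that $\Gamma$ meets the interior of each closed simplex in a ``tame'' way — each 2-simplex is crossed by $\Gamma$ in finitely many disjoint arcs, each arc entering and leaving through the relative interiors of edges (or ending at a vertex). This is possible since $\Gamma$ is a compact simple curve and $T$ is finite; at worst one perturbs $\Gamma$ within its isotopy class, but in fact one can just subdivide $T$ finely. Second, for each 2-simplex $\Delta$ through which $\Gamma$ passes, the trace $\Gamma\cap\Delta$ is a finite union of simple arcs with endpoints on $\partial\Delta$; I triangulate $\Delta$ (with curvilinear edges) so that every such arc becomes a union of edges of the new triangulation of $\Delta$, adding vertices at the endpoints on $\partial\Delta$ and at any crossing points, and interior vertices as needed to get an honest triangulation of the disc. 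Third, I match up these subdivisions along shared edges of $T$ (the endpoints of arcs on a shared edge are common to both adjacent triangles by construction), obtaining a global curvilinear sub-triangulation $T'$ of $T$ that contains $\Gamma$ as a subcomplex. Finally, by the engine lemma applied triangle by triangle (grouping a shared edge with its two neighbours when a vertex sits on it), $n_0-n_1+n_2$ is the same for $T$ and $T'$.

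**The main obstacle.** The genuine difficulty is not the Euler-characteristic bookkeeping — that is immediate from Theorem~\ref{teo1} — but putting $\Gamma$ in ``general position'' with respect to $T$ using only tools legitimate in this elementary setting: one must argue that a simple continuous curve on a triangulated surface can be taken to meet the triangulation in finitely many arcs per simplex, each crossing edges transversally and missing the case of running along an edge or touching a vertex pathologically. In the paper's spirit (avoiding heavy machinery) I would phrase this as: subdivide $T$ repeatedly so that each triangle is small, and observe that the portion of $\Gamma$ inside a small triangle is a disjoint union of arcs whose endpoints can be pushed slightly, along $\Gamma$, to lie in the relative interiors of edges — a move that does not affect which curvilinear sub-triangulation we end up needing. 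A secondary bookkeeping subtlety is the identification of simplices on $K_0$ when the surface is presented via a planar model, but here $T$ is already a triangulation of the surface itself, so no identifications intrude and the count is global and unambiguous.
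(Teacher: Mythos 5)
Your proposal is correct in substance but takes a genuinely different route from the paper. The paper's proof is a direct, constructive case analysis: it fixes a base point and an orientation on $\Gamma$, follows the curve simplex by simplex, and for each way the curve can enter and leave a triangle (through a vertex or through the interior of an edge, exiting through the opposite edge, the same edge, an adjacent edge, or a vertex) it exhibits an explicit curvilinear subdivision into $2$, $3$, $4$ or $5$ triangles and checks by hand that $n_0-n_1+n_2$ is unchanged; consistency across shared edges is automatic because the exit point of one triangle is the entry point of the next. You instead deduce the invariance wholesale from Theorem~\ref{teo1}: any retriangulation of a closed $2$-simplex $\Delta$ that fixes the boundary triangulation has Euler sum $\chi(\partial\Delta)+1=1=\chi(\Delta)$, so the global count cannot change. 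This is not circular (Theorem~\ref{teo1} does not use the lemma), and it buys uniformity --- no case analysis, and it handles arbitrarily complicated traces $\Gamma\cap\Delta$ at once --- at the price of leaning on the main theorem, whereas the paper keeps the lemma self-contained and checkable by elementary counting. The one place where you are looser than you should be is the treatment of new vertices lying on edges of $T$: ``grouping a shared edge with its two neighbours'' does not literally partition the triangles when a single triangle receives new vertices on two or three of its sides. The clean version is the two-stage variant you mention in passing: first insert every required boundary vertex by an elementary edge subdivision (on a closed surface each edge bounds exactly two triangles, so the balance is $+1-3+2=0$), and only then compare, triangle by triangle, the two triangulations of the disc $|\Delta|$ having the same boundary, each of which has sum $\chi(\partial)+1$ by Theorem~\ref{teo1}. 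With that adjustment your argument is complete; your handling of the transversality/tameness issue is no less careful than the paper's own one-line perturbation remark.
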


\begin{preuve} 
First of all, we can assume that the curve $\Gamma$ is transversal to all the edges of $T$, {\it i.e} 
 the intersection of $\Gamma$  with each edge is a finite number of points.  
Otherwise, a small perturbation of $\Gamma$ allows us to obtain transversality.

We choose a base point  (the starting point) $x_0$ on the curve, 
as well as an {orientation} of the curve. 
If the curve is not closed, we define the base point as one of the two extremities. 
The following process does not depend on either the starting point, or the orientation of the curve.  

A sub-triangulation $T'$ is built simplex by simplex following the orientation of the curve $\Gamma$. 
The first subdivided simplex  is the one 
 $\sigma_0$ containing the base point. 
 Let $y$ be the first point where  the curve leaves $\sigma_0$. 
The (curvilinear)  segment  $(x_0, y)$ will be an edge of $T'$ as well as  
segments connecting $x_0$ to vertices of $\sigma_0$, 
one of 
%these
 which can be $(x_0, y)$ if the point $y$ is a vertex of $\sigma_0$ (figure \ref{subdivi 0}).

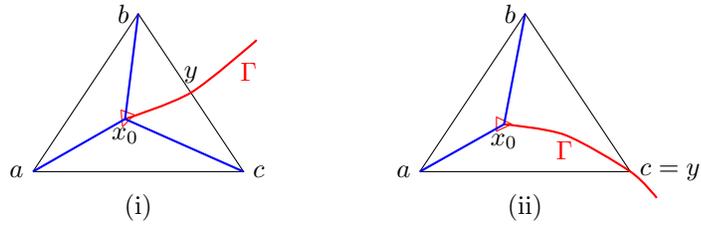
\begin{figure} [H] 
 \begin{tikzpicture}  [scale= 0.35]
  
\draw (-4,0) node[left]{$a$}   --  (0,6) node[left]{$b$} -- (4,0) node[right]{$c$} -- (-4,0);
\draw[thick,red] plot[smooth] coordinates {(-0.5,2) (2,3) (4.5,5)};     
\node at (-0.5,2) [rotate=70, red, very thick]  {$\vartriangleleft$};
\coordinate [label=above:$y$] (y) at (2,3);
\coordinate [label=below:$x_0$] (x_0) at (-0.5,2);
\draw [thick,blue](-4,0) -- (-0.5,2) -- (0,6);
\draw [thick,blue] (-0.5,2) -- (4,0);
\coordinate (u) at (-4,0);
% \foreach \point in {y}
 %   \fill [black,opacity=.5] (\point) circle (3pt);
  \draw (4.2,4.5) node[red,below] {$\Gamma$} ;
    \draw (0,-0.5) node[below] {$\rm (i)$} ;
\end{tikzpicture}
\qquad\qquad 
\begin{tikzpicture}  [scale= 0.35] 
    
\draw (-4,0) node[left]{$a$}   --  (0,6) node[left]{$b$} -- (4,0) node[right]{$c=y$} -- (-4,0);
\draw[thick,red] plot[smooth] coordinates {(-0.8,1.8)(1.5,1.4) (4,0) (5,-1)};    
\node at (-0.8,1.8) [rotate=180, red, very thick]  {$\vartriangleleft$}; 
\draw [thick,blue] (-4,0) -- (-0.8,1.8) -- (0,6);
\coordinate (u) at (-4,0);
\coordinate (v) at (4,0);
\coordinate [label=below:$x_0$] (x_0) at (-0.8,1.8);
% \foreach \point in {u,v,f}
 %   \fill [black,opacity=.5] (\point) circle (3pt);
   \draw (1.5,1.5) node[red,below] {$\Gamma$} ;
       \draw (0,-0.5) node[below] {$\rm (ii)$} ;
\end{tikzpicture}

\caption{Subdivision of the first simplex I.}\label{subdivi 0} 
\end{figure}

Now, it is enough to perform the construction for a simplex $\sigma = (a,b,c)$ 
the curve $\Gamma$  enters. 
In the following construction, we assume that all the simplices 
that the curve meets 
between the base point and the simplex  $\sigma$ (with the given orientation) are already subdivided. 
 The ``entry'' point of $\Gamma$  in the simplex $\sigma$ can be either a 
{vertex} or a point $d$ located on an edge of $\sigma$.

If the entry point of the curve $\Gamma$  is a vertex $a$, 
the curve can exit at a point  $d$ located either in the opposite edge, 
or in an edge containing the vertex $a$, or at another vertex. 
In the first case ({figure} \ref{subdivi I} (i)), 
we divide the triangle $(a,b,c)$ into two (curvilinear) triangles $(a,b,d)$ and $(a,d,c)$. 
In the second case (figure \ref{subdivi I} (ii)),  
let $e\in (a,c)$ be the point  at which  the curve  $\Gamma$  exits from the 
  triangle and we choose a point $f$ on the curve, located between $a$ and $e$. 
We divide the triangle $(a,b,c)$ into four (curvilinear) triangles $(a,b,f)$, $(b,f,e)$, $(b,e,c)$ and  $(a,f,e)$. 
Finally, in the last case (figure \ref{subdivi I} (iii)), 
assume that the exit point is the vertex $c$, 
we choose one point $f$ on the curve, located between $a$ and $c$.  
We divide the triangle $(a,b,c)$ into three (curvilinear) triangles 
 $(a,b,f)$, $(b,f,c)$ and $(a,f,c)$.

\begin{figure} [H] 
 \begin{tikzpicture}  [scale= 0.35]
  
\draw (-4,0) node[left]{$a$}   --  (0,6) node[left]{$b$} -- (4,0) node[right]{$c$} -- (-4,0);
\draw[thick,red] plot[smooth] coordinates {(-5,-1)(-4,0)(0,2) (2,3) (4.5,5)};     
\node at (-4.85,-0.8) [rotate=240, red, very thick]  {$\vartriangleleft$};
\coordinate [label=above:$d$] (d) at (2,3);
\coordinate (u) at (-4,0);
 \foreach \point in {u,d}
    \fill [black,opacity=.5] (\point) circle (3pt);
       \draw (4.2,4.5) node[red,below] {$\Gamma$} ;
    \draw (0,-0.5) node[below] {$\rm (i)$} ;
\end{tikzpicture}
\qquad
\begin{tikzpicture}  [scale= 0.35]
  
\draw (-4,0) node[left]{$a$}   --  (0,6) node[left]{$b$} -- (4,0) node[right]{$c$} -- (-4,0);
\draw[thick,red] plot[smooth] coordinates {(-5,-1)(-4,0)(-1,1.5) (1.5,0) (2.2,-1)};     
\node at (-4.87,-0.8) [rotate=240, red, very thick]  {$\vartriangleleft$};
\draw [thick,blue](-1,1.5) -- (0,6) -- (1.5,0);
\coordinate (u) at (-4,0);
\coordinate [label=below:$e$] (e) at (1.5,0);
\coordinate [label=below:$f$] (f) at (-1,1.5);
 \foreach \point in {u,e,f}
    \fill [black,opacity=.5] (\point) circle (3pt);
   \draw (2.6,0) node[red,below] {$\Gamma$} ;
       \draw (0,-0.5) node[below] {$\rm (ii)$} ;
\end{tikzpicture}
\qquad
\begin{tikzpicture}  [scale= 0.35]
  
\draw (-4,0) node[left]{$a$}   --  (0,6) node[left]{$b$} -- (4,0) node[right]{$c$} -- (-4,0);
\draw[thick,red] plot[smooth] coordinates {(-5,-1)(-4,0)(0.8,1.8) (4,0) (5,-1)};    
\node at (-4.9,-0.8) [rotate=240, red, very thick]  {$\vartriangleleft$}; 
\draw [thick,blue] (0,6) -- (0.8,1.8);
\coordinate (u) at (-4,0);
\coordinate (v) at (4,0);
\coordinate [label=below:$f$] (f) at (0.8,1.8);
 \foreach \point in {u,v,f}
    \fill [black,opacity=.5] (\point) circle (3pt);
\draw (-1.4,1.65) node[red] {$\Gamma$} ;
       \draw (0,-0.5) node[below] {$\rm (iii)$} ;
\end{tikzpicture}

\caption{Subdivision II.}\label{subdivi I} 
\end{figure}
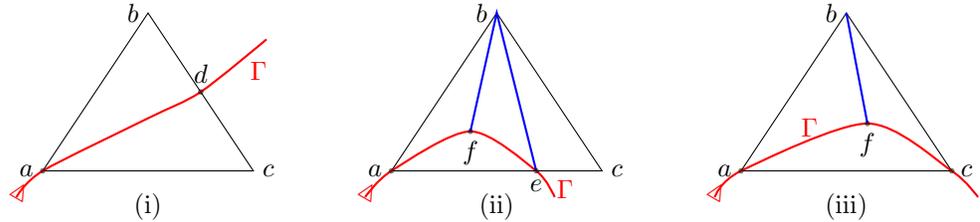

Notice that, in all the three cases, the choice of sub-triangulation is not unique, but the sum
$n_0 - n_1 + n_2$ 
remains unchanged 
%independently 
independent of the choice.

If the entry point of the curve $\Gamma$ is located in one edge, we denote by $d$  
the entry point and by $(a,c)$ the edge of $\sigma$ containing $d$.  
The next exit point of $\Gamma$  can be 
either in an edge different from $(a,c)$ (for example $(a,b)$), 
or in the same edge $(a,c)$, or it can be a vertex (see figure \ref{subdivi II}).

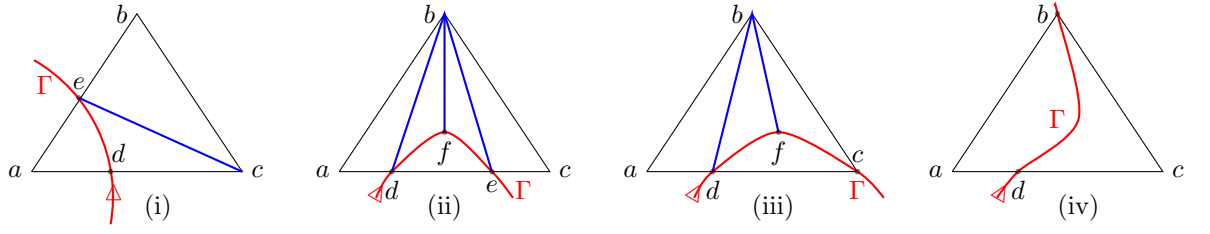
\begin{figure} [H] %figure 1

 \begin{tikzpicture}  [scale= 0.35]
  
\draw (-4,0) node[left]{$a$}   --  (0,6) node[left]{$b$} -- (4,0) node[right]{$c$} -- (-4,0);
\draw [thick,red](-1,-2) arc (-10:60: 6 );
\node at (-0.87,-0.8) [rotate=270, red, very thick]  {$\vartriangleleft$};
\coordinate [label=above:$e$] (e) at (-2.2, 2.8);
\coordinate (f) at (-1, 0);
\coordinate [label=above:$d$] (d) at (-0.7, 0);
\draw [thick,blue](e) -- (4,0);
 \foreach \point in {e,f}
    \fill [black,opacity=.5] (\point) circle (3pt);
\draw (-3.5,3.3) node[red] {$\Gamma$} ;
    \draw (0.8,-0.5) node[below] {$\rm (i)$} ;
\end{tikzpicture}
\quad
 \begin{tikzpicture}  [scale= 0.35]
  
  \draw (-4,0) node[left]{$a$}   --  (0,6) node[left]{$b$} -- (4,0) node[right]{$c$} -- (-4,0);
  
\draw[thick,red] plot[smooth] coordinates {(-2.6,-1)(-2,0) (0,1.5) (1.8,0) (2.6,-1)};     
\node at (-2.5,-0.8) [rotate=240, red, thick]  {$\vartriangleleft$}; 
\draw [thick,blue](-2,0) -- (0,6) -- (1.8,0);
\draw [thick,blue](0,6) -- (0,1.5);
\coordinate [label=below:$d$] (d) at (-2, 0);
\coordinate [label=below:$f$] (f) at (0,1.5);
\coordinate [label=below:$e$] (e) at (1.8, 0);
 \foreach \point in {d,e,f}
    \fill [black,opacity=.5] (\point) circle (3pt);
   \draw (3,0) node[red,below] {$\Gamma$} ;
\draw (0,-0.5) node[below] {$\rm (ii)$} ;
\end{tikzpicture}
\quad
\begin{tikzpicture}  [scale= 0.35]
  
\draw (-4,0) node[left]{$a$}   --  (0,6) node[left]{$b$} -- (4,0) node[above]{$c$} -- (-4,0);
\draw[thick,red] plot[smooth] coordinates {(-2.2,-1)(-1.5,0) (1,1.5) (4,0)(5,-1)};
\node at (-2.13,-0.8) [rotate=240, red, thick]  {$\vartriangleleft$}; 
\draw [thick,blue](-1.5,0) -- (0,6) -- (1,1.5);
\coordinate (v) at (4,0);
\coordinate [label=below:$d$] (d) at (-1.5,0);
\coordinate [label=below:$f$] (f) at (1,1.5);
 \foreach \point in {v,d,f}
    \fill [black,opacity=.5] (\point) circle (3pt);
\draw (4,0) node[red,below] {$\Gamma$} ;
       \draw (0.8,-0.5) node[below] {$\rm (iii)$} ;
\end{tikzpicture}
\quad
\begin{tikzpicture}  [scale= 0.35]
  
\draw (-4,0) node[left]{$a$}   --  (0,6) node[left]{$b$} -- (4,0) node[right]{$c$} -- (-4,0);
\draw[thick,red] plot[smooth] coordinates {(-2.3,-1)(-1.5,0) (0.8,2) (0,6)(-0.1,6.3)};
\node at (-2.13,-0.8) [rotate=240, red, thick]  {$\vartriangleleft$}; 
\coordinate (v) at (0,6);
\coordinate [label=below:$d$] (d) at (-1.5,0);
 \foreach \point in {v,d}
    \fill [black,opacity=.5] (\point) circle (3pt);
\draw (0,2) node[red] {$\Gamma$} ;
       \draw (0.8,-0.5) node[below] {$\rm (iv)$} ;
\end{tikzpicture}
\caption{Subdivision III. } \label{subdivi II}
\end{figure}

In the first 
%situation  
case ({figure} \ref{subdivi II} (i)), 
we define, for example, 
a sub-triangulation of the triangle $(a,b,c)$ formed by the (curvilinear) triangles $(a,d,e)$, $(c,e,d)$ and $(c,e,b)$.

In the second case, 
%situation, 
 for example if the curve $\Gamma$  enters and exits by two points $d$ and $e$ situated on the same segment $(a,c)$, we choose a point $f$ on the curve located between 
 $d$ and $e$ (figure \ref{subdivi II} (ii)). 
We define a  sub-triangulation of the triangle  $(a,b,c)$ 
%as being 
 formed by five (curvilinear) triangles $(a,b,d)$, $(b,d,f)$, $(b,f,e)$, $(b,e,c)$ and $(d,f,e)$. 

The two last cases 
%situations 
 of figure \ref{subdivi II} (iii) and (iv))  
are similar to the cases of figure 
\ref{subdivi I} (ii) and (i)  respectively. 
 
In the four cases, 
%situations, 
the choice of sub-triangulations is not unique, 
but the sum $n_0 - n_1 + n_2$ remains unchanged 
%independently 
 independent of the choice. 

The process continues for all the $2$-dimensional simplices 
crossed by the curve $\Gamma$, 
 and it is completed in a finite number of steps 
because the number of simplices is finite, even after the subdivision. 
\end{preuve} 
\goodbreak

\subsection {The sphere case.}  

In the following, we provide an alternating proof of Euler's formula for the sphere using   theorem \ref{teo1} as the main tool, with the idea of ``cutting surfaces'' as an additional tool.

Let $T$ be a triangulation of the sphere $\Sp^2$. 
We consider four curves on the sphere: 
The equator $E$ (or any parallel) and three curves $\gamma_1$, $\gamma_2$ and $\gamma_3$, 
going from the North pole $N$ to the curve $E$ along meridians. 
Let us denote by 
$a_i $ the intersection points $\gamma_i \cap E$, where $i=1,2,3$. 
 Using lemma \ref{lemageneral}  
we can {construct} a subdivision $T'$ of the triangulation  $T$ compatible with the four curves,  {\it i.e.} such that the union of the four curves is 
a subcomplex of $T'$. By lemma \ref{lemageneral}, the sum $n_0 - n_1 + n_2$ 
remains the same.

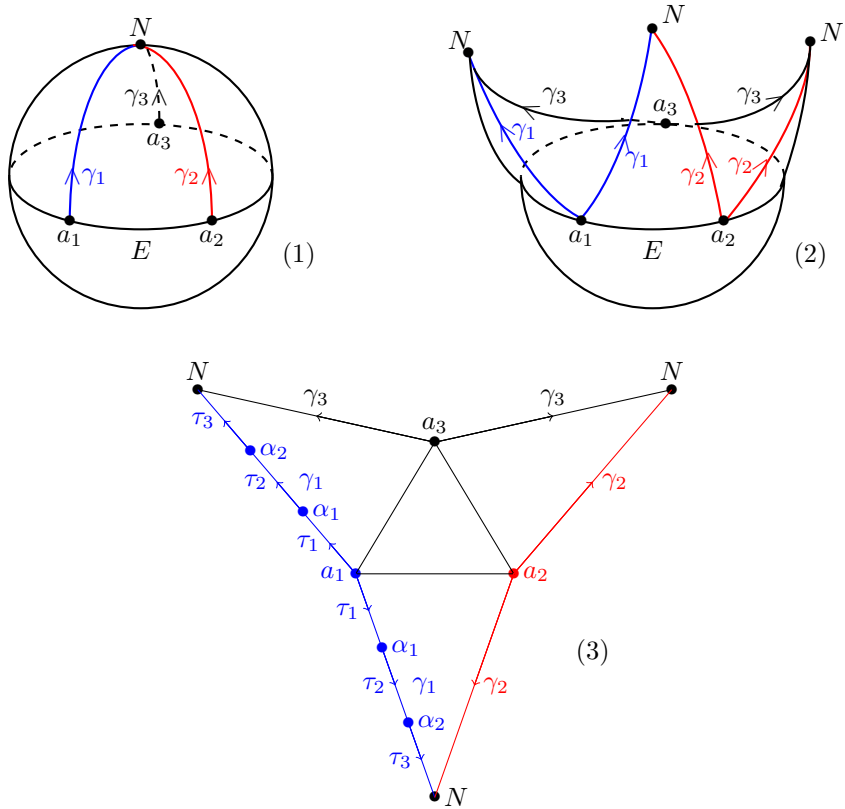
\begin{figure} [H] %figure 1
 \begin{tikzpicture}  [scale= 0.35]
 
 \draw[thick] (0,0) circle (5);
%equateur 
 \draw [thick](-5,0) arc (180:360: 5  and 2 );
  \draw [thick,dashed](5,0) arc (0:180: 5  and 2 );

% meridiens
 \draw [blue,thick](0,5) arc (90:180: 2.7  and 6.7 );
 \draw [red,thick](2.7,-1.7) arc (0:90: 3  and 6.7 );
  \draw [thick,dashed](0.7,2.2) arc (0:45: 1.8  and 4 );
  
  %points
  \filldraw (0,5) node {$\bullet$} node [above] {$N$};
  \filldraw (-2.7,-1.7) node {$\bullet$} node [below] {$a_1$};
    \filldraw (2.7,-1.7) node {$\bullet$} node [below] {$a_2$};
    \filldraw (0.7,2) node {$\bullet$} node [below] {$a_3$};
\filldraw (0,-2.1)   node [below] {$E$};

\node at (-2.6, 0)[blue]{$\wedge$};
\node at (-2.6,0)[blue,right]{$\gamma_1$};
\node at (2.6, 0)[red]{$\wedge$};
\node at (2.6,0)[red,left]{$\gamma_2$};
\node at (0.7,3){$\wedge$};
\node at (0.7,3)[left]{$\gamma_3$};

\node at (7,-3)[left]{$(1)$};

\end{tikzpicture}
\qquad \qquad
 \begin{tikzpicture}  [scale= 0.35]
 
 \draw[thick] (-5,0) arc (180:360: 5  and 5);
 
 %equateur 
 \draw [thick](-5,0) arc (180:360: 5  and 2 );
  \draw [thick,dashed](5,0) arc (0:180: 5  and 2 );

% meridiens gamma1 et 2
\draw [rotate=-10][blue,thick](-2.5,-2.2)  arc (300:366: 3  and  8 ) node[sloped,midway]{$>$};
 \draw  [rotate=10] [red,thick](2.3,-2) arc (0:57: 3  and 9 ) node[sloped,near start]{$<$};
 \node at (-1.4,0.65)[blue,right]{$\gamma_1$};
  \node at (2.4,0)[red,left]{$\gamma_2$};
  
 % meridiens gamma2 et 3
\draw[rotate=320] [red,thick](3.2,0.3) arc (0:70: 3  and  8 ) ; %node[sloped,near start] {$<$};
\draw[rotate=340] [thick](4.7,1.3) arc (0:40: 3 and  8 ) ;
\draw [thick](1.6,2) arc (270:360: 4.39 and  3 )node[sloped,midway]{$>$};
\draw [thick,dashed](0.7,2) -- (1.2,2);

\node at (4.2,0.4)[red] [rotate=240]  {$<$};
  \node at (4.2,0.4)[red,left]{$\gamma_2$};
\node at (3.7,2.4)[above]{$\gamma_3$};

  % meridiens gamma1 et 3
  \draw[rotate=20] [blue,thick](-3,-0.65) arc (250:180: 3  and  8 ) ; %node[sloped,near end]{$<$};
\draw[thick](-5,-0.25) arc (250:187: 3  and  6 );
\draw[rotate=5]  [thick](-6.4,4.7) arc (190: 270: 5.8 and  3 ) node[sloped,midway]{$<$};
\draw [thick,dashed](0.7,2) -- (-1.2,2.2);

\node at (-5.62,1.7)[blue] [rotate=300]  {$<$};
\node at (-3.7,2.4)[above]{$\gamma_3$};
 \node at (-5.7,1.7)[blue,right]{$\gamma_1$};
 
  %points
%  \filldraw (0,5) node {$\bullet$} node [above] {$N$};
  \filldraw (-2.7,-1.7) node {$\bullet$} node [below] {$a_1$};
    \filldraw (2.7,-1.7) node {$\bullet$} node [below] {$a_2$};
    \filldraw (0.5,2) node {$\bullet$} node [above] {$a_3$};
\filldraw (0,-2.1)   node [below] {$E$};
\filldraw (0,6.2)  node [right] {$N$};
\filldraw (6,5.5)   node [right] {$N$};
\filldraw (-6.5,5.2)   node [left] {$N$};
\filldraw (0,5.6)  node {$\bullet$};
\filldraw (6,5.1)   node {$\bullet$};
\filldraw (-7,4.7)   node {$\bullet$};

\node at (7,-3)[left]{$(2)$};

\end{tikzpicture}
\bigskip
\vglue0.2truecm
 \begin{tikzpicture}  [scale= 0.35]

% les points N
\coordinate (N1) at  (9,7);
\filldraw (N1) node {$\bullet$} node [above] {$N$};
\coordinate (N2) at  (-9,7);
\filldraw (N2) node {$\bullet$} node [above] {$N$};
\coordinate (N3) at  (0,-8.5);
\filldraw (N3) node {$\bullet$} node [right] {$N$};

% les points a_i
\coordinate (a1) at  (-3,0);
\filldraw (a1) node [blue] {$\bullet$} node [blue,left] {$a_1$};
\coordinate (a2) at  (3,0);
\filldraw (a2) node [red]{$\bullet$} node [red,right] {$a_2$};
\coordinate (a3) at  (0,5);
\filldraw (a3) node {$\bullet$} node [above] {$a_3$};

%les c�t�s
\draw (N2)--(a3) -- (N1);
\draw (a1) --(a3)  -- (a2)  -- (a1);
\draw [blue] (N3) -- (a1) --(N2);
\draw [red](N1) -- (a2) -- (N3);

% les sommets interm�diaires et les fl�ches
\coordinate (c1) at  (-5,2.33);
\coordinate (c2) at  (-7,4.66);
\filldraw (c1) node [blue] {$\bullet$} node [blue,right] {$\alpha_1$};
\filldraw (c2) node [blue] {$\bullet$} node [blue,right] {$\alpha_2$};

\coordinate (d1) at (-4,1.165);
\filldraw [blue,->] (a1)-- (d1);
\coordinate (d2) at (-6,3.495);
\filldraw [blue,->] (c1)-- (d2);
\coordinate (d3) at (-8,5.825);
\filldraw [blue, ->] (c2)-- (d3);

\coordinate (e1) at  (-2,-2.833);
\coordinate (e2) at  (-1,-5.666);
\filldraw (e1) node [blue]{$\bullet$} node [blue,right] {$\alpha_1$};
\filldraw (e2) node [blue]{$\bullet$} node [blue,right] {$\alpha_2$};

\coordinate (f1) at (-2.5,-1.416);
\filldraw [blue,->] (a1)-- (f1);
\coordinate (f2) at (-1.5,-4.242);
\filldraw [blue,->] (e1)-- (f2);
\coordinate (f3) at (-0.5,-7.08);
\filldraw [blue,->] (e2)-- (f3);

\filldraw (d1)  node [blue,left] {$\tau_1$};
\filldraw (d2) node [blue,left] {$\tau_2$};
\filldraw (d3)  node [blue,left] {$\tau_3$};
\filldraw (f1)  node [blue,left] {$\tau_1$};
\filldraw (f2) node [blue,left] {$\tau_2$};
\filldraw (f3) node [blue,left] {$\tau_3$};

\coordinate  (g) at (-4.5,6);
\coordinate  (h) at (4.5,6);
\coordinate  (j) at (6,3.5);
\coordinate  (k) at (1.5,-4.25);

\filldraw [->] (a3)-- (g);
\filldraw [->] (a3)-- (h);
\filldraw [red,->] (a2)-- (j);
\filldraw [red,->] (a2)-- (k);

\filldraw (g)  node [above] {$\gamma_3$};
\filldraw (h) node [above] {$\gamma_3$};
\filldraw (j)  node [red,right] {$\gamma_2$};
\filldraw (k)  node [red,right] {$\gamma_2$};
\filldraw (-5.5,3.5) node [blue,right] {$\gamma_1$};
\filldraw (-1.2,-4.25) node [blue,right] {$\gamma_1$};

\node at (7,-3)[left]{$(3)$};
\end{tikzpicture}
\caption{Planar {representation}  of the sphere.}\label{2spheres}
\end{figure}

Now, we cut the sphere along the curves $\gamma_1$, $\gamma_2$ and $\gamma_3$,  
in such a way that the projection provides a polygon $K$ in the plane containing the equator (see figure  \ref{2spheres}). 
Notice that the projection of $T'$ gives us a sub-triangulation $K'$ of $K$. 
We obtain a planar  {representation} of the sphere, homeomorphic to a disc with identifications of the 
simplices on the boundary  $K_0$ of $K$  
corresponding to the cuts.

Theorem \ref{teo1} 
%says 
states that the sum $n_0^T - n_1^T + n_2^T$ of the triangulation $T$ 
is equal to the sum  
$n_0^{K_0} - n_1^{K_0} +1 $, 
where the sum  $n_0^{K_0} - n_1^{K_0}$ is calculated by the boundary of the figure. 
Notice that, using the same notation on the sphere and on the planar  {representation}, the vertex $N$ 
is common to all the curves $\gamma_i$ and must be identified. Beside this vertex $N$,  the number of vertices in each curve $\gamma_i$ is equal to the number of edges 
 (see figure \ref{2spheres}).  
Then for the boundary of the planar  {representation}, we have 
$n_0^{K_0} - n_1^{K_0} = +1$  and for the triangulation, we have 
$$n_0^T- n_1^T + n_2^T = +2.$$

\medskip

\subsection{The torus case.}

Let $T$ be a triangulation of the torus $\T = \Sp^1 \times \Sp^1$. 
We choose a meridian $M=\Sp^1 \times \{0\}$ and a parallel $P=\{0\} \times \Sp^1$. 
They cross at one point $A = \{0\} \times\{0\}$. 
Observe that, without loss of generality, 
we can choose them transversally to all the edges 
(1-dimensional simplices of $T$). 
We define a sub-triangulation $T'$ of 
$T$, in the following way (see figure \ref{letore}): 
Each triangle $\sigma$ (2-dimensional simplex) of $T$ 
{meeting} $M$ or $P$  is divided in such the way that $\sigma \cap M$ (or  $\sigma \cap P$)  
is an edge of  $T'$. 
Lemma \ref{lemageneral} 
%shows 
implies that the sum $n_0 - n_1 + n_2$ remains the same for $T$ and $T'$. 

\begin{figure}[H]
\begin{tikzpicture}[scale=1.2]

\draw[very thick] (3, 1.5) ellipse (3cm and 1.5 cm);
\draw [very thick] (0.8, 1.6) sin (3, 1) cos (5.2, 1.6);
\draw [very thick] (1, 1.5) sin  (3, 2) cos (5, 1.5);); 

\draw[dotted, very thick, red] (3,0) arc (-90:0:0.6) arc (0:90: 0.4cm) ;
\draw[very thick,red] (3,0) arc (90:0:-0.4) arc (0:-90: -0.6cm) ;

% Triangulation $K$
\draw[green, very thick] (2, 0.5) -- (4, 0.2) -- (4, 0.8) -- (2, 0.5); 
\node at (4, 0.5) [right] {\textcolor{green}{$K$}};

%Triangulation $K'$ (= $K$ + (+4.5, -2.5))
\draw[very thick,red] (7.5,-2.5) arc (90:0:-0.4) arc (0:-90: -0.6cm) ;
\draw[green, very thick] (6.5, -2) -- (8.5, -2.3) -- (8.5, -1.7) -- (6.5, -2); 
\node at (7.11, -2.14) {$\bullet$}; % Points d'intersection
\node at (7.13, -1.9) {$\bullet$};

\draw [green, very thick] (7.11, -2.14) -- (7.13, -1.9); 
\draw [green, very thick] (7.13, -1.9) -- (8.5, -2.3); 
\node at (8.5, -2) [right] {\textcolor{green}{$K'$}};

%SETA

\draw [->] (6,0) cos ( 7, -1); 
\end{tikzpicture}
\caption{{Sub-triangulation $T'$ of} $T$. }
\label{letore}
\end{figure}
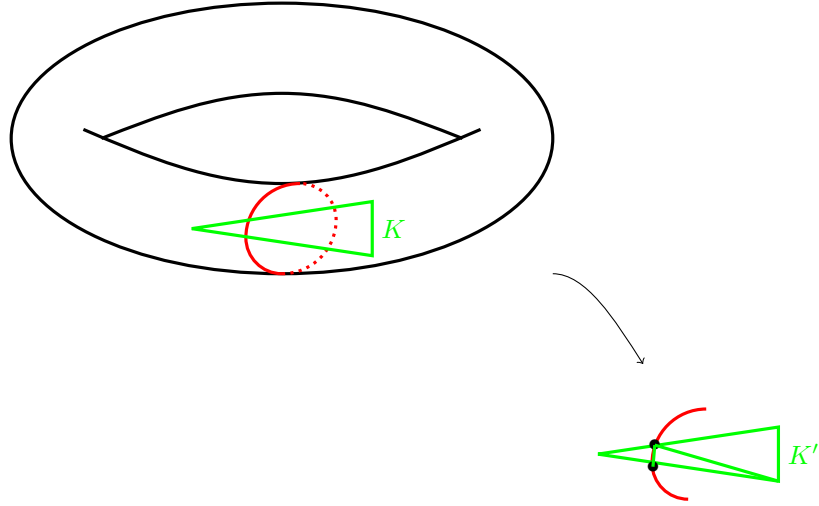

\begin{figure}[H]
\begin{tikzpicture}[scale=0.6]

%Rectangulo
 \draw [blue, thick]  (0, 0) -- (6,0) ;
 \draw [red, thick]  (6,0) -- (6,6) ; 
 \draw [red, thick] (0,6) -- (0,0); 
\draw [blue, thick]  (6,6) --  (0,6) ;
 
\coordinate  (0) at (0,0);
\coordinate  (10) at (0,1.5);
\coordinate  (11) at (0,2.8);
\coordinate  (12) at (0,3.5);
\coordinate  (13) at (0,6);

\coordinate  (9) at (6,0);
\coordinate  (8) at (6,1.5);
\coordinate  (19) at (6,2.8);
\coordinate  (20) at (6,3.5);
\coordinate  (21) at (6,6);

\coordinate  (1) at (2,0);
\coordinate  (4) at (3,0);
\coordinate  (6) at (4.8,0);

\coordinate  (22) at (2,6);
\coordinate  (23) at (3,6);
\coordinate  (24) at (4.8,6);

\coordinate  (2) at (1.2,2);
\coordinate  (3) at (2.4,2);
\coordinate  (5) at (3.7,2.5);
\coordinate  (7) at (5,2.2);

\coordinate  (14) at (1.5,5);
\coordinate  (15) at (2.5,4);
\coordinate  (16) at (4,3.8);
\coordinate  (17) at (4.6,3);
\coordinate  (18) at (4.9,5);

\draw [green, thick]  (0) -- (2) -- (1) -- (3) -- (2);
\draw [green, thick]  (5) -- (4) -- (3) -- (5) -- (6) -- (7) -- (5);
\draw [green, thick] (9) -- (7) -- (8);
\draw [green, thick] (10) -- (2) -- (11);
\draw [green, thick] (12) -- (14) -- (13);
\draw [green, thick] (14) -- (22) -- (15) -- (14) -- (2) -- (15) -- (3);
\draw [green, thick] (21) -- (18) -- (24) -- (16) -- (18) -- (17) -- (16) -- (23) -- (15) -- (16) -- (5) -- (15);
\draw [green, thick] (18) -- (20) -- (17) -- (19) -- (7) -- (17) -- (5);

%setas
\node at (1,0) [blue] {>};
\node at (1,0)[below]{$a$};
\node at (2.5,0) [blue] {>};
\node at (2.5,0)[below]{$b$};
\node at (4,0)[blue] {>};
\node at (4,0)[below]{$c$};
\node at (5.5,0)[blue] {>};
\node at (5.5,0)[below]{$d$};

\node at (1,6)[blue] {>};
\node at (1,6)[above]{$a$};
\node at (2.5,6)[blue] {>};
\node at (2.5,6)[above]{$b$};
\node at (4,6)[blue] {>};
\node at (4,6)[above]{$c$};
\node at (5.5,6)[blue] {>};
\node at (5.5,6)[above]{$d$};

\node at (0, 0.9)[red]{$\wedge$};
\node at (0,0.9)[left]{$e$};
\node at (0, 2.2)[red]{$\wedge$};
\node at (0,2.2)[left]{$f$};
\node at (0, 3.1)[red]{$\wedge$};
\node at (0,3.1)[left]{$g$};
\node at (0, 4.7)[red]{$\wedge$};
\node at (0,4.7)[left]{$h$};

\node at (6, 0.9)[red]{$\wedge$};
\node at (6,0.9)[right]{$e$};
\node at (6, 2.2)[red]{$\wedge$};
\node at (6,2.2)[right]{$f$};
\node at (6, 3.1)[red]{$\wedge$};
\node at (6,3.1)[right]{$g$};
\node at (6, 4.7)[red]{$\wedge$};
\node at (6,4.7)[right]{$h$};

\node at (0, 0)[left]{$A$};
\node at (6, 0)[right]{$A$};
\node at (0, 6)[left]{$A$};
\node at (6, 6)[right]{$A$};

\end{tikzpicture}
\caption{A planar {representation} of torus.}  \label{Toroplanar} 
\end{figure}
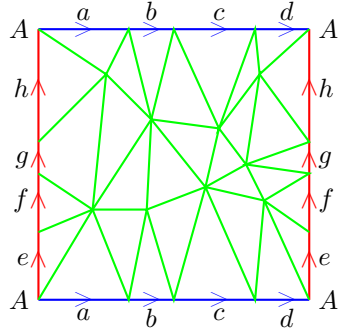

Now, cutting the torus along $M$ and $P$, 
we obtain a planar  {representation} $K$  of the torus which is homeomorphic to a disc, 
 with identifications on the boundary ${K_0}$ corresponding to the cuts. 
 Hence, by lemma \ref{lemageneral}, 
the number $n_0 - n_1 + n_2$ remains unchanged. 
 Using theorem \ref{teo1}, we have:
 $$n_0^T- n_1^T + n_2^T =n_0^{T'} - n_1^{T'} + n_2^{T'} = n_0^K - n_1^K + n_2^K =
 n_0^{{K_0}} - n_1^{{K_0}} + 1.$$ 
Now, with the 
%identifications in 
identifications on 
the boundary
 ${K_0}$, we have $n_1^{{K_0}} = n_0^{{K_0}} +1$. 
Finally 
 $$n_0^T- n_1^T + n_2^T = 0$$ 
for any triangulation of the torus.

The same proof holds for the torus of genus $g$. 
For example, let us take a torus of genus 3. 
Given a triangulation  $T$ of the torus, 
we fix a point $A$ and around each ``hole'' of the torus, we fix a ``meridian'' 
($a_1$, $a_2$, $a_3$ in figure \ref{toreg}) and a ``parallel'' ({$b_1$, $b_2$, $b_3$} in {figure \ref{toreg}}). 
We construct a sub-triangulation $T'$ of $T$ by the same 
 methods as in the case of the torus. 
 Cutting the torus of genus $3$ along the meridians and the parallels, 
we obtain a planar  {representation} of the torus of genus $3$, triangulated by the triangulation $T'$. 
By the same 
%calculus than 
 procedure as in 
the torus case, one obtains $n_0 - n_1 + n_2 = -4$.  
%That
 This is an example of  
Lhuilier's formula \ref{Lhuilier}.

\begin{figure}[H] %figure 1

\begin{tikzpicture}  [scale= 0.17]

 \draw[thick] (14,0) arc (00:155 : 14 and 8.1);
\draw[rotate=120] [thick](9.5,8) arc (0:180: 14 and  8 ) ;
 \draw[rotate=240] [thick](18,10) arc (20:150: 14.5 and  9 ) ;
 
  %\draw [thick](-8,0.6) arc (170:370: 3  and 1 );
  
  %a1 et a2 et a3
  \draw[blue,very thick] plot[smooth] coordinates {(0,-3)(-0.3,0)(-0.9,2.3) (-2.5,3.5) (-4.5,3.8)(-6,3.6)(-7.5,3.1) (-9.5,1.5)
   (-9.5,0)(-7.7,-2)(-4,-2.8) (0,-3)};  
   \draw[blue,very thick] plot[smooth] coordinates {(0,-3)(0.3,0)(0.9,2.3) (2.5,3.5) (4.5,3.8)(6,3.6)(7.5,3.1) (9.5,1.5)
   (9.5,0)(7.7,-2)(4,-2.8) (0,-3)};  
     \draw[blue,very thick] plot[smooth] coordinates {(0,-3)(-3,-7)(-4,-9)(-5.5,-14) (-5,-16)(-2.5,-16.8)(0,-17)
     (2.5,-16.8)(5,-16)(5.5,-14) (4,-9)(3,-7)(0,-3) };   
   
   %b1
   \draw[very thick,red] plot[smooth] coordinates {(0,-3)(-1.25,-1.6)(-2.5,-0.8)(-4.7,-0.6)};  
    \draw[very thick,dashed,red] plot[smooth] coordinates {(-4.7,-0.6)(-6,-1) (-6.5,-1.2)
    (-9,-2.3)(-13,-4.4)(-14,-5.2)};  
   \draw[very thick,red] plot[smooth] coordinates {(-14,-5.2)(-9.5,-5.6)  (-3,-4.6)  (0,-3)};  
    %b2      
       \draw[very thick,red] plot[smooth] coordinates {(0,-3)(1.25,-1.6)(2.5,-0.8)(4.7,-0.6)};  
    \draw[very thick,dashed,red] plot[smooth] coordinates {(4.7,-0.6)(6,-1) (6.5,-1.2)
    (9,-2.3)(13,-4.4)(14,-5.2)};  
       \draw[very thick,red] plot[smooth] coordinates {(14,-5.2)(9.5,-5.6)  (3,-4.6)  (0,-3)};  
   %b3
    \draw[very thick,red] plot[smooth] coordinates {(0,-3)(3.5,-6)(7,-11)(7.5,-14)(7.5,-16)(7.2,-17.2)};  
       \draw[very thick,red,dashed] plot[smooth] coordinates {(7.2,-17.2)(6.5,-17.5) (6,-17.8)(5,-17.8)(4,-17.2)
       (3,-16)(2.3,-14)};  
    \draw[very thick,red] plot[smooth] coordinates {(0,-3)(0.2,-7)(1,-11)  (2.3,-14)};  
    
   \coordinate (a1) at (-4.5,3.8);
   \filldraw (a1) node [blue] {$<$} node [above] {$a_1$};
     \coordinate (ra1) at (-4,-2.8);
    %  \filldraw (ra1) node {$>$};
      
        \coordinate (b1) at (-9.5,-5.6); 
           \filldraw (b1) node {\textcolor{red}{$<$}} node [below] {$b_1$};
     \coordinate (rb1) at (-3,-5.35);
   %   \filldraw (rb1) node {$<$};

 \coordinate (a2) at (4.5,3.8);
   \filldraw (a2) node [blue] {$>$} node [above] {$a_2$};
     \coordinate (ra2) at (4,-2.8);
   %   \filldraw (ra2) node {$<$};
      
        \coordinate (b2) at (9.5,-5.6);
   \filldraw (b2) node {\textcolor{red}{$>$}} node [below] {$b_2$};
     \coordinate (rb2) at (3,-5.35);
 %     \filldraw (rb2) node {$<$};
 
 \coordinate (a3) at (-5.5,-14);
   \filldraw (a3) node [blue] {$\vee$} node [left] {$a_3$};
     \coordinate (ra2) at (4,-2.8);
 
         \coordinate (b3) at (7.5,-13);
   \filldraw (b3) node {\textcolor{red}{$\wedge$}} node [right] {$b_3$};
     \coordinate (rb2) at (3,-5.35);

    \draw [thick](-8,0.6) arc (170:370: 3  and 1 );    
    \draw [thick](-2.45,-0.15) arc (17:163: 2.7  and 1 );
 \draw [thick](2,0.6) arc (170:370: 3  and 1 );
    \draw [thick](7.65,0) arc (17:168: 2.7  and 1 );
        \draw [thick](-3,-13) arc (170:370: 3  and 1 );    
    \draw [thick](2.55,-13.8) arc (17:163: 2.7  and 1 );
  \end{tikzpicture}
  \qquad\qquad
 \begin{tikzpicture}  [scale= 0.55]
 
  \coordinate  (a) at (-1,4) ;
  \coordinate  (b) at (1,4) ;
  \coordinate  (c) at (3,3) ;
  \coordinate  (d) at (4,1) ;
  \coordinate  (e) at (4,-1) ;
 \coordinate  (f) at (3,-3) ;
 \coordinate  (g) at (1,-4) ;
 \coordinate  (h) at (-1,-4) ;
  \coordinate  (j) at (-3,-3) ;
 \coordinate  (k) at (-4,-1) ;
 \coordinate  (l) at (-4,1) ;
 \coordinate  (m) at (-3,3) ;

 \draw [blue]  (a)--(b) node[sloped,midway] {$>$};
 \node at (0,4) [above]{$a_1$};
 
  \draw [red] (b)--(c) node[sloped,midway] {$>$};
  \node at (2,3.8) [right]{$b_1$};
  
 \draw [blue]  (c)--(d) node[sloped,midway] {$<$};
 \node at (4.2,1.7) [above]{$a_1$};  
  
    \draw [red] (d)--(e) node[sloped,midway] {$<$};
   \node at (4,0) [right]{$b_1$}; 
   
 \draw [blue]  (e)--(f) node[sloped,midway] {$<$};
   \node at (4.2,-1.7) [below]{$a_2$};   
   
  \draw[red]  (f)--(g) node[sloped,midway] {$<$};
    \node at (2,-3.8) [right]{$b_2$};
    
 \draw [blue] (g)--(h) node[sloped,midway] {$>$};
      \node at (0,-4) [below]{$a_2$};     
      
        \draw[red]  (h)--(j) node[sloped,midway] {$>$};
     \node at (-2,-3.6) [left]{$b_2$};
  
 \draw [blue]   (j)--(k) node[sloped,midway] {$<$};
  \node at(-4.2,-1.7) [below]{$a_3$};
  
   \draw[red] (k)--(l) node[sloped,midway] {$>$};
 \node at (-4,0) [left]{$b_3$};  
  
 \draw [blue]   (l)--(m) node[sloped,midway] {$<$};
   \node at (-4.2,1.7) [above]{$a_3$}; 
   
     \draw[red] (m)--(a) node[sloped,midway] {$<$};
   \node at (-2,3.8) [left]{$b_3$};   
   
  \end{tikzpicture}
 \caption{Planar {representation} of the torus of genus $3$.
In order to have a light figure, 
we 
%do 
did not draw a triangulation $T'$ in such a way that 
each edge  $a_i$ and $b_i$  are subdivided in at least three segments.} \label{toreg}
\end{figure}
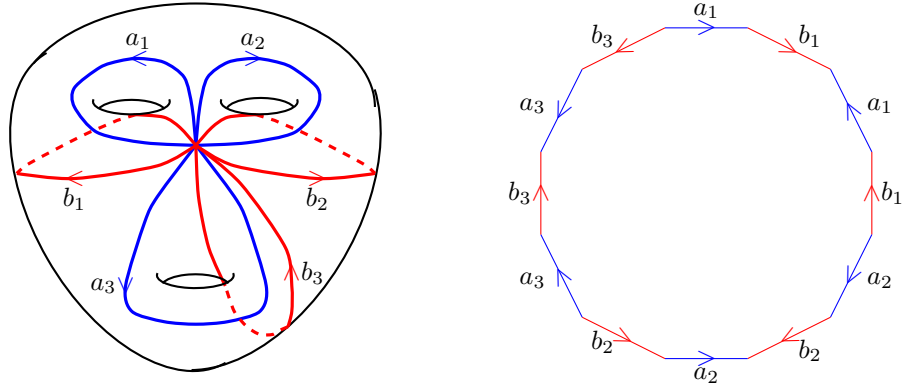

\subsection{The projective plane case.}

The projective plane is represented by a sphere whose diametrically opposite points are identified.
 A triangulation of the projective plane is given by a triangulation of the sphere which is 
{symmetric} with respect to the center of the sphere.
 Let us consider the {sphere} in  $\R^3$ (see 
figure \ref{projo} (a))  and let $T$ be  such a triangulation of the projective plane. 

The intersection of $T$ with the equator defines a triangulation $J$ of the equator 
that is {symmetric} with respect to the center of the sphere. 
Let us define a sub-triangulation $T'$ of $T$ such that simplices  of $J$ 
 are simplices of $T'$ and such that $T'$ is {symmetric} with respect to the center of the sphere 
 (see  figure \ref{projo} (b)). 
By the lemma  \ref{lemageneral}, the sum $n_0 - n_1 + n_2$ is the same for $T$ and $T'$.

\begin{figure}[H]
\begin{tikzpicture}[scale=0.4]

%FIRST SPHERE
\draw[thick] (0,0) circle (5);
%equateur 
% \draw [thick](-5,0) arc (180:360: 5  and 2 );
\draw [thick,red](-5,0) arc (180:270: 5  and 2 );
\draw [thick,blue](0,-2) arc (270:360: 5  and 2 );

  %\draw [thick,dashed](5,0) arc (0:180: 5  and 2 );
\draw [thick,dashed,red](5,0) arc (0:90: 5  and 2 );
\draw [thick,dashed,blue](0,2) arc (90:180: 5  and 2 );

\draw [green](1, -4) cos (3.5, -3) sin (3, 0) cos (1, -4);
\draw[dashed, thick,green] (-1, 4) cos (-3.5, 3) sin (-3, 0) cos (-1, 4);
\node at (0,-1.25) {$A$};
\node at (0,2.6) {$A$};
\node at (-5,0) [left] {$B$};
\node at (5,0) [right]{$B$};
\node at (5,-5) {(a)};

\coordinate (A1) at (-5,0);
\coordinate (A2) at (5,0);
\coordinate (A3) at (0,-2);
\coordinate (A4) at (0,2);
\coordinate (A5) at (13,0);
\coordinate (A6) at (23,0);
\coordinate (A7) at (18,-2);
\coordinate (A8) at (18,2);

 \foreach \point in {A1,A2,A3,A4,A5,A6,A7,A8}
\fill [black,opacity=.5] (\point) circle (5pt);

%SECOND SPHERE
\draw[thick] (18,0) circle (5);
%equateur 

% \draw [thick](23,0) arc (180:360: 5  and 2 );
\draw [thick,red](13,0) arc (180:270: 5  and 2 );
\draw [thick,blue](18,-2) arc (270:360: 5  and 2 );

  %\draw [thick,dashed](13,0) arc (0:180: 5  and 2 );
\draw [thick,dashed,red](23,0) arc (0:90: 5  and 2 );
\draw [thick,dashed,blue](18,2) arc (90:180: 5  and 2 );

\draw [green](19, -4) cos (21.5, -3) sin (21, 0) cos (19, -4);
\draw[dashed, thick,green] (17, 4) cos (14.5, 3) sin (15, 0) cos (17, 4);

%\node at (19.7,-1.95) {$\bullet$};
\draw[thick, green]  (19.7,-1.95) sin (21.5, -3);
\draw[thick, green, dashed]  (16.3,1.95) sin (14.5, 3);
\node at (18,-1.25) {$A$};
\node at (18,2.6) {$A$};
\node at (13,0) [left] {$B$};
\node at (23,0) [right]{$B$};
\node at (23,-5) {(b)};

\end{tikzpicture} 
\caption{Triangulation $T$ of the projective plane \quad  Sub-triangulation $T'$ of $T$.}\label{projo}
\end{figure}
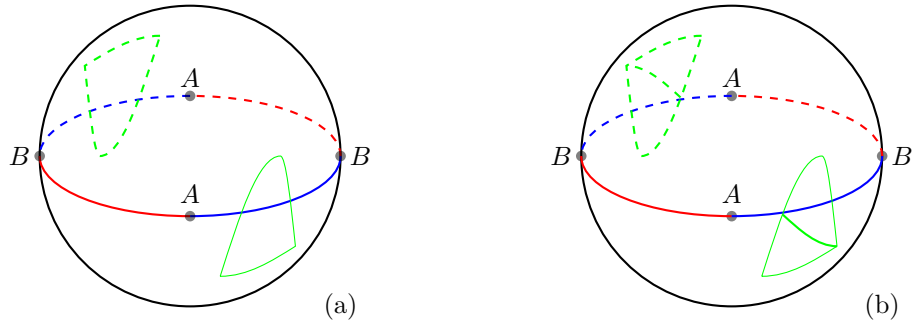

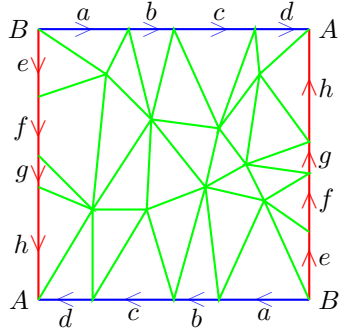
\begin{figure}[H]
\begin{tikzpicture}[scale=0.6]

%Rectangulo
 \draw [blue, thick]  (0, 0) -- (6,0) ;
 \draw [red, thick]  (6,0) -- (6,6) ; 
 \draw [red, thick] (0,6) -- (0,0); 
\draw [blue, thick]  (6,6) --  (0,6) ;
 
\coordinate  (0) at (0,0);
\coordinate  (25) at (0,4.5);
\coordinate  (26) at (0,3.2);
\coordinate  (27) at (0,2.5);
\coordinate  (13) at (0,6);

\coordinate  (9) at (6,0);
\coordinate  (8) at (6,1.5);
\coordinate  (19) at (6,2.8);
\coordinate  (20) at (6,3.5);
\coordinate  (21) at (6,6);

\coordinate  (28) at (4,0);
\coordinate  (4) at (3,0);
\coordinate  (29) at (1.2,0);

\coordinate  (22) at (2,6);
\coordinate  (23) at (3,6);
\coordinate  (24) at (4.8,6);

\coordinate  (2) at (1.2,2);
\coordinate  (3) at (2.4,2);
\coordinate  (5) at (3.7,2.5);
\coordinate  (7) at (5,2.2);

\coordinate  (14) at (1.5,5);
\coordinate  (15) at (2.5,4);
\coordinate  (16) at (4,3.8);
\coordinate  (17) at (4.6,3);
\coordinate  (18) at (4.9,5);

\draw [green, thick]  (0) -- (2) -- (29) -- (3) -- (2);
\draw [green, thick]  (5) -- (4) -- (3) -- (5) -- (28) -- (7) -- (5);
\draw [green, thick] (9) -- (7) -- (8);
\draw [green, thick] (27) -- (2) -- (26);
\draw [green, thick] (25) -- (14) -- (13);
\draw [green, thick] (14) -- (22) -- (15) -- (14) -- (2) -- (15) -- (3);
\draw [green, thick] (21) -- (18) -- (24) -- (16) -- (18) -- (17) -- (16) -- (23) -- (15) -- (16) -- (5) -- (15);
\draw [green, thick] (18) -- (20) -- (17) -- (19) -- (7) -- (17) -- (5);

%setas
\node at (5,0) [blue] {<};
\node at (5,0)[below]{$a$};
\node at (3.5,0) [blue] {<};
\node at (3.5,0)[below]{$b$};
\node at (2.1,0)[blue] {<};
\node at (2.1,0)[below]{$c$};
\node at (0.6,0)[blue] {<};
\node at (0.6,0)[below]{$d$};

\node at (1,6)[blue] {>};
\node at (1,6)[above]{$a$};
\node at (2.5,6)[blue] {>};
\node at (2.5,6)[above]{$b$};
\node at (4,6)[blue] {>};
\node at (4,6)[above]{$c$};
\node at (5.5,6)[blue] {>};
\node at (5.5,6)[above]{$d$};

\node at (0, 5.2)[red]{$\vee$};
\node at (0,5.2)[left]{$e$};
\node at (0, 3.8)[red]{$\vee$};
\node at (0,3.8)[left]{$f$};
\node at (0, 2.8)[red]{$\vee$};
\node at (0,2.8)[left]{$g$};
\node at (0, 1.25)[red]{$\vee$};
\node at (0,1.25)[left]{$h$};

\node at (6, 0.9)[red]{$\wedge$};
\node at (6,0.9)[right]{$e$};
\node at (6, 2.2)[red]{$\wedge$};
\node at (6,2.2)[right]{$f$};
\node at (6, 3.1)[red]{$\wedge$};
\node at (6,3.1)[right]{$g$};
\node at (6, 4.7)[red]{$\wedge$};
\node at (6,4.7)[right]{$h$};

\node at (0, 0)[left]{$A$};
\node at (6, 0)[right]{$B$};
\node at (0, 6)[left]{$B$};
\node at (6, 6)[right]{$A$};

\end{tikzpicture}
\caption {A planar {representation} of the projective plane.}  \label{Projplanar2} 
\end{figure}

Now, the {orthogonal} projection of the northern 
%semi-sphere 
hemisphere 
 to the plane $0xy$ 
provides a triangulation of the disc $D$ of radius $1$, {centered} at the origin,  
whose triangulation of the boundary is {symmetric} with respect to the center of the disc.  
With the identification of simplices, we have 
 $n_0 - n_1 = 0$ on the boundary. 
Then, by theorem \ref{teo1}, we have 
 $$n_0^T- n_1^T + n_2^T = +1$$
 for any triangulation of the projective plane.

\subsection{The Klein bottle case.}

The case of Klein bottle is similar to the case of torus. 
Given a triangulation $T$ of the Klein bottle, 
we choose a meridian $M$ and a parallel $P$. 
Let us define a sub-triangulation $T'$ of $T$ compatible with $M$ and $P$. 
The cut along $M$ and $P$ provides a planar {representation} of the Klein bottle 
as a rectangle triangulated with identifications on the boundary. 
On the boundary, we have 
$n_1  = n_0  +1$. 
Then, by theorem \ref{teo1}, we have 
 $$n_0^T- n_1^T + n_2^T = 0$$
for any triangulation of the Klein bottle.

\begin{figure}[H]
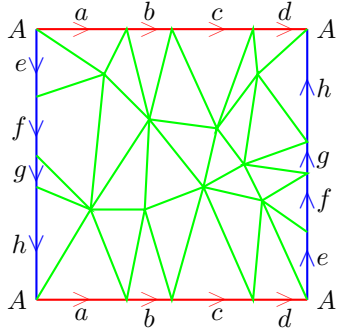
\centering 
% [inline block 0: 3 envs, 60705 chars -> data_tex | \begin{tikzpicture}[scale=0.6]    ...]


\caption{ A planar {representation} of the Klein bottle.} 
\end{figure}

\subsection{The pinched torus case.}

Not every surface with singularities admits a planar {representation}. 
The pinched torus is an example of a singular surface 
%admitting
that does admit such a planar {representation}.

Let us recall that the pinched torus is a surface in $\R^3$ defined by the following  cartesian 
{parameterization}:
\begin{equation*}
\begin{cases}
x = \left( r_1 + r_2 \; {\rm cos} (v) \, {\rm cos} \left(\frac{1}{2} u\right) \right) \cos (u) \\
y = \left( r_1 + r_2 \; \cos(v) \, \cos\left(\frac{1}{2} u\right) \right) \sin (u) \\
z= r_2\; \sin (v)\,  \cos \left(\frac{1}{2} u\right)  
 \end{cases}
\end{equation*}
where $r_1$ and $r_2$ are respectively the 
%big 
 large and small radii.

\begin{figure}[H]
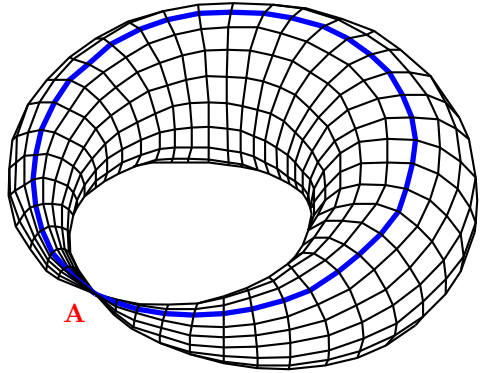
 \centering 
% [inline block 1: 1 envs, 22569 chars -> data_tex | \begin{tikzpicture}[scale=0.5]  ...]

\caption{The pinched torus and cut.}\label{toropincado}
\end{figure}

Let $T$ be a triangulation of the pinched torus.  
We choose a ``parallel'' $P$ passing 
 %by 
through the singular point $A$
 of the pinched torus and 
 we define a sub-triangulation $T'$ of $T$ compatible with $P$ using lemma \ref{lemageneral}.  
By cutting along  $P$, one obtains a planar {representation} of the pinched torus (figure \ref{pincado})  with {identifications} on the boundary. 
One observes that the point $A$ is duplicated. 
On  the boundary, we have 
$n_0 - n_1 = 0$. 
Then, by theorem \ref{teo1}, we have
 $$n_0^T- n_1^T + n_2^T = +1$$ 
for any triangulation of the pinched torus.

\begin{figure}[H]
\begin{tikzpicture}[scale=0.8]

%Rectangulo
 \draw [blue, thick]  (1,0) -- (-2,3) -- (1,6) ;
 \draw [blue, thick]  (1,6) -- (4,6) ;
\draw [blue, thick]  (1,0) --  (4,0) ;
 \draw [blue, thick]  (4,0) -- (7,3) -- (4,6) ;
 
 \node at (-1.5,3.5) [blue,rotate=40]  {$>$};
  \node at (-0.5,4.5) [blue,rotate=40]  {$>$};
   \node at (0.5,5.5) [blue,rotate=40]  {$>$};
    \node at (1.5,6) [blue]  {$>$};
     \node at (2.7,6) [blue]  {$>$};
      \node at (3.75,6) [blue]  {$>$};
       \node at (4.8,5.2) [blue,rotate=-40]  {$>$};
        \node at (5.8,4.2) [blue,rotate=-40]  {$>$};
         \node at (6.5,3.5) [blue,rotate=-40]  {$>$};
 \node at (-1.5,2.5) [blue,rotate=-40]  {$>$};
  \node at (-0.5,1.5) [blue,rotate=-40]  {$>$};
   \node at (0.5,0.5) [blue,rotate=-40]  {$>$};
    \node at (1.5,0) [blue]  {$>$};
     \node at (2.7,0) [blue]  {$>$};
      \node at (3.75,0) [blue]  {$>$};
       \node at (4.8,0.8) [blue,rotate=40]  {$>$};
        \node at (5.8,1.8) [blue,rotate=40]  {$>$};
         \node at (6.5,2.5) [blue,rotate=40]  {$>$};
                  
 \node at (-2.1,3.1) [red,above]  {$\bf A$};
 \node at (7.1,3.1) [red,above]  {$\bf A$};
  \node at (-2,3) [red]  {$\bullet$};
 \node at (7,3) [red]  {$\bullet$};

 \coordinate  [label=above:$a$](9) at (-1,4);
 \coordinate [label=above:$b$] (13) at (0,5);
\coordinate  [label=above:$c$](26) at (1,6); 
\coordinate  [label=above:$d$](22) at (2,6);
\coordinate  [label=above:$e$](23) at (3.5,6);
\coordinate [label=above:$f$] (24) at (4,6);
\coordinate [label=above:$g$] (18) at (5.5,4.5);
\coordinate  [label=above:$h$](20) at (6,4);

\coordinate  [label=below:$a$](27) at (-1,2);
\coordinate  [label=below:$b$] (0) at (0,1);
\coordinate [label=below:$c$] (30) at (1,0);
\coordinate  [label=below:$d$](1) at (2,0);
\coordinate [label=below:$e$] (4) at (3.5,0);
\coordinate  [label=below:$f$](6) at (4,0);
\coordinate [label=below:$g$] (8) at (5.5,1.5);
\coordinate  [label=below:$h$](19) at (6,2);

\coordinate  (25) at (0,3);
\coordinate   (21) at (5.5,3);

\coordinate  (2) at (1.2,2);
\coordinate  (3) at (2.4,2);
\coordinate  (5) at (3.7,2.5);
\coordinate  (7) at (5,2.2);

\coordinate (14) at (1.5,5);
\coordinate (15) at (2.5,4);
\coordinate (16) at (4,3.8);
\coordinate  (17) at (4.6,3);

\draw [green, thick]  (-2,3) -- (25) ;
\draw [green, thick]  (27) -- (25) -- (9);
\draw [green, thick]  (27) -- (25) -- (9);
\draw [green, thick]  (13) -- (25);
\draw [green, thick]  (21) -- (7,3);
\draw [green, thick] (20) -- (21) -- (17);
\draw [green, thick] (19) -- (21) -- (7);
\draw [green, thick] (6) -- (7) -- (5);
\draw [green, thick]  (2) -- (30);
\draw [green, thick]  (14) -- (26);
\draw [green, thick]  (0) -- (2) -- (1) -- (3) -- (2);
\draw [green, thick]  (5) -- (4) -- (3) -- (5) -- (6); 
\draw [green, thick] (7) -- (8);
\draw [green, thick] (27) -- (2) -- (25);
\draw [green, thick] (25) -- (14) -- (13);
\draw [green, thick] (14) -- (22) -- (15) -- (14) -- (2) -- (15) -- (3);
\draw [green, thick] (24) -- (16) -- (18) -- (17) -- (16) -- (23) -- (15) -- (16) -- (5) -- (15);
\draw [green, thick] (20) -- (17);
\draw [green, thick] (19) -- (7) -- (17) -- (5);

\end{tikzpicture}
\caption{A planar {representation} of the pinched torus.} \label{pincado}
\end{figure}
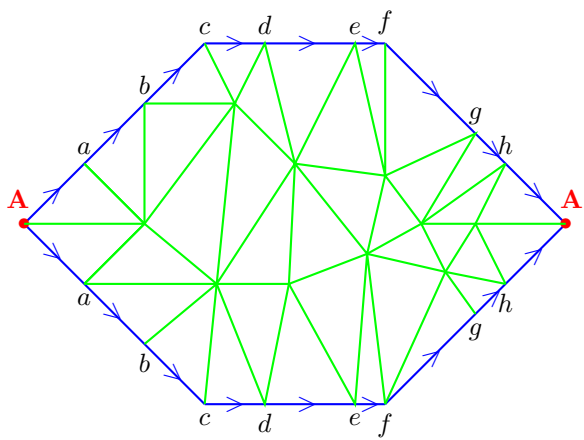

\def\refname{References}

\end{document}